\date{\today}
\newtheorem*{theorem*}{Theorem}
\newtheorem{theorem}{Theorem}[section]
\newtheorem{corollary}[theorem]{\bf{Corollary}}
\newtheorem{lemma}[theorem]{Lemma}
\theoremstyle{definition}
\theoremstyle{remark}
\newtheorem{remark}[theorem]{\bf{Remark}}
\numberwithin{equation}{section}
\newcommand{\beas}{\begin{eqnarray*}}
\newcommand{\eeas}{\end{eqnarray*}}
\newcommand{\bes} {\begin{equation*}}
\newcommand{\ees} {\end{equation*}}
\newcommand{\be} {\begin{equation}}
\newcommand{\ee} {\end{equation}}
\newcommand{\bea} {\begin{eqnarray}}
\newcommand{\eea} {\end{eqnarray}}
\newcommand{\R}{\mathbb R}
\newcommand{\C}{\mathbb C}
\newcommand{\Z}{\mathbb Z}%
\newcommand{\N}{\mathbb N}
\newcommand{\X}{\mathbb{X}}
\renewcommand{\Im}{\text{Im}}
\renewcommand{\Re}{\text{Re}}
\renewcommand{\Re}{\operatorname{Re}}
\renewcommand{\Im}{\operatorname{Im}}
\newcommand{\fa}{\mathfrak{a}}
\def \ee {\end{equation}}
\numberwithin{equation}{section}
\title [$L^p$-$L^q$ Fourier multipliers on Riemannian symmetric spaces]{$L^p$-$L^q$ Fourier multipliers and Hausdorff-Young-Paley inequalities on Riemannian symmetric spaces of noncompact type}
\author[Rana, Ruzhansky]{Tapendu Rana, Michael Ruzhansky}	
\address{Tapendu Rana  \endgraf Department of Mathematics: Analysis, Logic and Discrete Mathematics,	\endgraf Ghent University, 	\endgraf Krijgslaan 281, Building S8, B 9000 Ghent, Belgium.} \email{tapendurana@gmail.com, tapendu.rana@ugent.be}
\address{Michael Ruzhansky \endgraf Department of Mathematics: Analysis, Logic and Discrete Mathematics,	\endgraf Ghent University, 	\endgraf Krijgslaan 281, Building S8, B 9000 Ghent, Belgium. 
\endgraf and
\endgraf School of Mathematical Sciences
\endgraf Queen Mary University of London
\endgraf United Kingdom}
\email{michael.ruzhansky@ugent.be} 
\date{}
\subjclass[2010]{Primary 43A85, 43A22; Secondary 22E30}
\keywords{Fourier multipliers, H\"ormander theorem, Hausdorff-Young-Paley inequality, Riemannian symmetric spaces}
\begin{document}
\begin{abstract}
Our primary objective in this article is to establish H{\"o}rmander type $L^p \rightarrow L^q$ Fourier multiplier theorems in the context of noncompact type Riemannian symmetric spaces  $\mathbb{X}$ of arbitrary rank for the range $1 < p \leq 2 \leq q < \infty$. As a consequence of the Fourier multiplier theorem, we also derive a spectral multiplier theorem on $\mathbb{X}$. We then apply this theorem to prove $L^p \rightarrow L^q$ boundedness for functions of the Laplace-Beltrami operator and to obtain embedding theorems and operator estimates for the potentials and heat semigroups. Additionally, we provide mixed-norm versions of the Hausdorff-Young and Paley inequalities.  In this context, where the Fourier transform is holomorphic, and its domain consists of various strips, we present two versions of these inequalities and explore their interrelation. Furthermore, our findings and methods are also applicable to harmonic $NA$ groups, also known as Damek-Ricci spaces.
\end{abstract}
\maketitle
\tableofcontents
\addtocontents{toc}{\setcounter{tocdepth}{2}}
\section{Introduction}
The theory of translation-invariant bounded operators between $L^p$ spaces is a fundamental aspect of harmonic analysis, partly due to its applications in fields such as partial differential equations. In their seminal papers,  H\"ormander-Mikhlin \cite{Mih56, Hor60}  studied the boundedness of translation-invariant operators in the Euclidean setting. These operators can be characterized using the classical Euclidean Fourier transform on $\mathbb{R}^N$ and are thus known as Fourier multipliers.
In the context of $L^p \rightarrow L^q$ boundedness, it is known that one must have $p \leq q$. However, there is a philosophical difference between the cases where $p$ and $q$ are separated by $2$ and where they are not.   H\"{o}rmander \cite{Hor60} observed that, for the range $1 < p \leq 2 \leq q < \infty$, $L^p \rightarrow L^q$ estimates of multiplier operators do not rely on the regularity of the symbol.  This contrasts with other cases where the smoothness of the symbol is crucial.

Our main interest in this article is to establish H{\"o}rmander type $L^p \rightarrow L^q$ multiplier theorems for Fourier multiplier operators in the setting of Riemannian symmetric spaces (which we will refer to as symmetric spaces) of noncompact type for the range $1 < p \leq 2 \leq q < \infty$. These results apply to general Fourier multipliers and, in particular, yield new findings for spectral multipliers. We present several applications of these results, including $L^p \rightarrow L^q$ estimates for functions of the Laplace-Beltrami operator, embedding theorems, and estimates for heat semigroups. Furthermore, our results for spectral multiplier operators appear effective, encompassing known results expressed in terms of symbols as corollaries.

The question of $L^p$ boundedness of operators defined by multipliers that satisfy suitable symbol estimates has been extensively studied, with H{\"o}rmander-Mikhlin \cite{Mih56, Hor60} or Marcinkiewicz-type theorems providing a wealth of results. To mention a few, we refer the reader to \cite{Tha87, An90, DPW13, RW15} for $L^p$ multiplier results in various contexts. Additionally, analogs of these results in other noncommutative settings are established in \cite{GJP17, PRDL22, CGPT24}. In this article, we focus on $L^p \rightarrow L^q$ multipliers. A characteristic feature of $L^p \rightarrow L^q$ multipliers is that they require less regularity on the symbol. To provide context, we recall that in \cite{Hor60}, H{\"o}rmander initiated the study of $L^p \rightarrow L^q$ Fourier multipliers in the Euclidean setting. Specifically, if $B$ is a translation-invariant bounded operator on $L^2(\mathbb{R}^N)$ with Fourier transform $m_B$ of its convolution kernel, H{\"o}rmander established the following result concerning the $L^p \rightarrow L^q$ boundedness of the multiplier operator $B$.
\begin{theorem}[{{\cite[Theorem 1.11]{Hor60}}}] \label{thm_hor_Lp,Lq}
    Let $ 1<p\leq 2\leq q<\infty$. Assume that
     \begin{equation}
       \sup_{\alpha >0} \alpha \left( \int\limits_{\left\{ \xi \in \R^N : |m_B(\xi)|>\alpha \right\} } d\xi \right)^{\left(\frac{1}{p}-\frac{1}{q} \right)} <\infty.
    \end{equation}
    Then the operator $B$ is bounded from $L^p(\R^N)$ to $L^q(\R^N)$.    
\end{theorem}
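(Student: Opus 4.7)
The plan is to set $1/r := 1/p - 1/q$, so that $r \in (1,\infty]$ and the hypothesis becomes $\|m_B\|_{L^{r,\infty}(\R^N)} < \infty$, i.e.\ $m_B$ lies in the weak Lebesgue space. The goal is then to establish the quantitative estimate
\[
\|Bf\|_{L^q(\R^N)} \;\leq\; C\,\|m_B\|_{L^{r,\infty}(\R^N)}\,\|f\|_{L^p(\R^N)}
\]
for Schwartz $f$, and extend by density. My approach is via duality on the Fourier side: writing $\|Bf\|_{L^q} = \sup\{|\langle Bf,g\rangle| : \|g\|_{L^{q'}}\leq 1\}$ and invoking Parseval, the theorem reduces to the trilinear bound
\[
\left| \int_{\R^N} m_B(\xi)\,\what f(\xi)\,\overline{\what g(\xi)}\,d\xi \right|
\;\leq\; C\,\|m_B\|_{L^{r,\infty}}\,\|f\|_{L^p}\,\|g\|_{L^{q'}}.
\]

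For this, the key tool is the refined Hausdorff--Young inequality obtained by real interpolation between $\mk F:L^1\to L^\infty$ and $\mk F:L^2\to L^2$: for $1<p\leq 2$ one has $\|\what f\|_{L^{p',p}}\leq C_p\|f\|_{L^p}$, and since $q'\leq 2$, its dual version gives $\|\what g\|_{L^{q,q'}}\leq C_q\|g\|_{L^{q'}}$. O'Neil's H\"older inequality in Lorentz spaces then controls the pointwise product: with the first indices combining as $1/p'+1/q = 1-1/r = 1/r'$ and the secondary indices as $1/p+1/q' = 1+1/r \geq 1$ (equivalent to $p\leq q$), one finds $\what f\cdot \overline{\what g}\in L^{r',s}\hookrightarrow L^{r',1}$ (the embedding holding because $s\leq 1$), with
\[
\bigl\|\what f\cdot \overline{\what g}\bigr\|_{L^{r',1}}\leq C\,\|f\|_{L^p}\,\|g\|_{L^{q'}}.
\]
Finally, the duality $(L^{r',1})^* = L^{r,\infty}$ delivers
\[
\left| \int m_B\cdot h\,d\xi \right|\leq C\,\|m_B\|_{L^{r,\infty}}\,\|h\|_{L^{r',1}},
\]
and chaining the three estimates closes the argument.

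The most delicate point is the precise tracking of the secondary Lorentz indices in O'Neil's inequality: one must verify that the product of $\what f$ and $\overline{\what g}$ lands in the pre-dual $L^{r',1}$ of $L^{r,\infty}$. This is exactly where the Lorentz refinement of Hausdorff--Young becomes indispensable---the classical Hausdorff--Young together with ordinary H\"older would yield only $L^{r'}$, against which $L^{r,\infty}$ cannot be paired without loss. An alternative, more elementary route is to decompose $m_B = \sum_j m_B\chi_{\{2^j<|m_B|\leq 2^{j+1}\}}$ along dyadic level sets, estimate each piece via classical Hausdorff--Young and H\"older, and sum via a Marcinkiewicz-style interpolation argument that exploits the distribution-function hypothesis; this bypasses Lorentz machinery at the cost of more real-interpolation bookkeeping. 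Either way, the proof uses no regularity of $m_B$---only the size of its level sets---which is the structural feature characterising the $p\leq 2\leq q$ regime and the one that the paper aims to transfer to the symmetric-space setting.
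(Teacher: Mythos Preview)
The paper does not supply its own proof of this statement: Theorem~\ref{thm_hor_Lp,Lq} is quoted from H\"ormander \cite[Theorem~1.11]{Hor60} purely as motivation for the symmetric-space analogue (Theorem~\ref{thm_Lp-Lq_int}), and no argument for the Euclidean case appears anywhere in the paper. Your proof via the Lorentz-refined Hausdorff--Young inequality, O'Neil's H\"older inequality, and the duality $(L^{r',1})^* = L^{r,\infty}$ is correct and is a standard modern route to H\"ormander's result; the tracking of secondary Lorentz indices is accurate (one may in fact target $L^{r',1}$ directly in O'Neil, since $1/p + 1/q' = 1 + 1/r \geq 1$).

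It is still worth comparing your method to the one the paper employs for its own multiplier theorem. There the authors do not use Lorentz spaces at all: they establish a mixed-norm Hausdorff--Young inequality and a Paley-type inequality on $\fa^*\times K$ (Theorems~\ref{thm_HY_UD} and~\ref{thm_paley_uni}), interpolate them via Stein--Weiss to obtain a Hausdorff--Young--Paley inequality (Theorem~\ref{thm_HYP_uni}), and then chain the dual Hausdorff--Young bound with the HYP inequality applied with weight $u = |m|^{r}$. This is essentially the ``elementary alternative'' you sketch at the end---Paley's inequality encodes the weak-$L^r$ hypothesis on the symbol through a weighted $L^p$ estimate rather than a Lorentz norm---and it is the route that transports cleanly to symmetric spaces, where the Helgason Fourier transform lands in a genuine mixed-norm space over $\fa^*\times K$ and Lorentz-space duality on the spectral side is not readily available. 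Your Lorentz argument is slicker on $\R^N$, but adapting it to $\X$ would require building Lorentz machinery for the Helgason transform; the paper's HYP route trades that for more interpolation bookkeeping while staying entirely within ordinary and weighted Lebesgue spaces.
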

The $L^p \rightarrow L^q$ boundedness of Fourier multipliers has recently been explored by researchers in various settings. Some works include studies on compact groups \cite{ANR16, KR22}, locally compact groups \cite{Cow75, AR20}, the generalized Fourier transform \cite{KR23}, and homogeneous trees \cite{CMS99, CMW19}. The second author with his collaborators investigated $L^p \rightarrow L^q$ Fourier multipliers for the Jacobi transform  \cite[Theorem 5.6]{KR21}, while in \cite{KRR24}, the analogue of this problem was studied for all Fourier matrix coefficients of $\mathrm{SL}(2,\R)$. Other relevant studies in this direction can be found in \cite{CK23, MRR24},  as well as the references therein. We now proceed by providing a specific description of the problem considered in this article to facilitate a mathematically rigorous discussion.

\subsection{H\"ormander's theorem on noncompact type symmetric spaces} In this article, we will explore the $L^p \rightarrow L^q$ boundedness properties of Fourier multiplier operators in the context of Riemannian symmetric spaces $\X = G/K$, where $G$ is a noncompact connected semisimple Lie group with finite center, and $K$ is a maximal compact subgroup of $G$.  Additionally, our results and techniques are applicable to harmonic $NA$ groups, also known as Damek-Ricci spaces. We refer the reader to Section \ref{sec_Preliminaries} for any other unexplained notation. 
It is well known that $G$-invariant operators $B$ that are bounded on $L^2(\X)$ are characterized by Fourier multiplier operators $T_m$, where $m$ is a bounded Weyl-invariant function on $\mathfrak{a}^{*}$ and
\begin{align}\label{def_multop_intB}
\widetilde{T_m f}(\lambda, k) := m (\lambda) \widetilde{f}(\lambda, k) \qquad \lambda \in \mathfrak{a}^*, k\in K,
\end{align}
 with $\widetilde{f}$ denoting the Fourier transform of the function $f: \X \rightarrow \mathbb{C}$ (see \cite[Prop. 1.7.1 and Ch. 6.1]{GV88} for more details). 
In 1990, Anker, in his remarkable work \cite{An90}, improved and generalized the previous results of Clerc and Stein \cite{CS71}, as well as those of other researchers \cite{ST78, AL86, Tay89}, by proving an analogue of the Hörmander-Mikhlin $L^p$ Fourier multiplier theorem on  Riemannian symmetric spaces of noncompact type. Later, Ionescu \cite{Ion02, Ion03} made further improvements to Anker's result. Ionescu's results are currently the best-known sufficient conditions of the Hörmander-Mikhlin type on $L^p(\X)$ when $p>1$. For the weak type $(1,1)$ case, the sharpest known results are provided by Meda and Vallarino \cite{MV10}. Recently, Wróbel \cite{Wro23} presented a multiplier theorem for rank-one symmetric spaces, which improves upon the results of both \cite{ST78} and \cite{Ion02}; see also \cite{RW20, MW21, PR23}.

Despite the outstanding contributions mentioned above, a general theory for the $L^p \rightarrow L^q$ boundedness of Fourier or spectral multiplier theorems on symmetric spaces of noncompact type has yet to be established. However, some $L^p \rightarrow L^q$ results are available for specific functions of the Laplace-Beltrami operator (see, for example, \cite{Lo89, An92, CGM93}). More recently, Anker et al. \cite[Appendix C]{AGL23} extended the result of \cite{CS71} by providing a necessary condition on the multiplier $m$ for the operator $T_m$ to be bounded from $L^p$ to $L^q$ when $p$ and $q$ are not separated by 2.  In this work, we present an analogue of the H{\"o}rmander type $L^p \rightarrow L^q$ multiplier theorem for $1 < p \leq 2 \leq q < \infty$ on general symmetric spaces of noncompact type.
\begin{theorem}\label{thm_Lp-Lq_int}
     Let $\X$ be a general symmetric space of noncompact type and let $1<p\leq 2 \leq q<\infty$. Then we have the following estimate:
     \begin{align}\label{eqn_T_m_Lp-Lq}
          \|T_m f\|_{L^q(\X)}& \leq C_{p,q} \left( \sup\limits_{\alpha>0}\,\alpha \left( \int\limits_{\{ \lambda \in \fa^* : |m(\lambda)|>\alpha \}   }  |c(\lambda)|^{-2} d\lambda\right)^{{\frac{1}{p}-\frac{1}{q}}} \right) \|f\|_{L^p(\X)}
     \end{align}
     for all $f \in L^p(\X)$,  where  $|c(\lambda)|^{-2} d\lambda$ is the Plancherel measure on $\X$.
\end{theorem}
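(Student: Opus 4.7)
The plan is to prove Theorem \ref{thm_Lp-Lq_int} via a Hausdorff--Young--Paley inequality on $\X$, formulated with respect to the Plancherel measure $d\mu(\lambda,k):=|c(\lambda)|^{-2}\,d\lambda\,dk$ on $\fa^*\times K/M$. Since $q\geq 2$, the reverse Hausdorff--Young inequality yields
\begin{equation*}
\|T_m f\|_{L^q(\X)}\leq C\,\|\widetilde{T_m f}\|_{L^{q'}(d\mu)}=C\,\|m\,\widetilde f\|_{L^{q'}(d\mu)},
\end{equation*}
so it suffices to bound $\|m\,\widetilde f\|_{L^{q'}(d\mu)}$ by the displayed constant times $\|f\|_{L^p(\X)}$, which is exactly the content of a suitable Hausdorff--Young--Paley inequality on $\X$.

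The core step is a Paley inequality: for $1<p\leq 2$ and any positive measurable $\phi$ on $\fa^*$ with
\begin{equation*}
M_\phi:=\sup_{\alpha>0}\alpha\int_{\{\phi>\alpha\}}|c(\lambda)|^{-2}\,d\lambda<\infty,
\end{equation*}
one has $\bigl(\int|\widetilde f|^p\,\phi^{2-p}\,d\mu\bigr)^{1/p}\leq C\,M_\phi^{(2-p)/p}\,\|f\|_{L^p(\X)}$. I would derive it by Marcinkiewicz interpolation applied to the sublinear map $f\mapsto\widetilde f$, using the Plancherel identity at $p=2$ and the elementary pointwise bound $|\widetilde f(\lambda,k)|\leq\|f\|_{L^1(\X)}$ at $p=1$, with the Paley hypothesis $M_\phi<\infty$ supplying the weak-type distributional endpoint after a dyadic decomposition of the level sets of $\phi$. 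Interpolating this Paley inequality against the Hausdorff--Young inequality $\|\widetilde f\|_{L^{p'}(d\mu)}\leq C\,\|f\|_{L^p(\X)}$ by means of Stein's complex interpolation applied to the analytic family $z\mapsto\phi^{z(2-p)/p}\,\widetilde f$ produces the Hausdorff--Young--Paley inequality
\begin{equation*}
\Bigl(\int|\widetilde f|^b\,\phi^{b(1/b-1/p')}\,d\mu\Bigr)^{1/b}\leq C\,M_\phi^{1/b-1/p'}\,\|f\|_{L^p(\X)}
\end{equation*}
for $b$ in the interpolation interval. Taking $b=q'$ and $\phi=|m|^{q'p'/(p'-q')}$ collapses the weight to $|m|^b$, and the algebraic identity $1/q'-1/p'=1/p-1/q$ converts $M_\phi^{1/q'-1/p'}$ into the precise constant appearing in the statement of the theorem.

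The main obstacle is the Paley inequality itself in this non-Euclidean measure setting: the Plancherel density $|c(\lambda)|^{-2}$ grows polynomially at infinity and vanishes on the walls of the positive Weyl chamber, so the Marcinkiewicz argument must be carried out with quantitative tracking of constants via a dyadic decomposition adapted to $(\fa^*,|c|^{-2}\,d\lambda)$, and the extra $K/M$-variable must be handled uniformly throughout. A secondary subtlety is the range of $b$ in the Hausdorff--Young--Paley inequality: the direct complex-interpolation interval $[p,p']$ covers only $q\leq p'$, so the supercritical case $q>p'$ (where $b=q'<p$) requires an additional argument, potentially exploiting the holomorphic extension of $\widetilde f$ to a tube about $\fa^*$ and a second version of the Hausdorff--Young--Paley inequality adapted to that strip, in order to reach the full Hörmander range $1<p\leq 2\leq q<\infty$.
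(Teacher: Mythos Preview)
Your overall architecture---reverse Hausdorff--Young on the $L^q$ side, a Paley inequality as the core estimate, and Stein--Weiss interpolation to produce a Hausdorff--Young--Paley inequality---matches the paper's strategy. But there is a genuine gap at the $L^1$ endpoint of your Paley argument.

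You claim the ``elementary pointwise bound $|\widetilde f(\lambda,k)|\leq\|f\|_{L^1(\X)}$'' as the $p=1$ input to Marcinkiewicz. This is false on a noncompact symmetric space: the Poisson kernel $e^{(i\lambda-\rho)H(g^{-1}k)}$ is \emph{not} bounded in $k$, so no uniform pointwise estimate holds. What \emph{is} true is the restriction estimate
\[
\Bigl(\int_K|\widetilde f(\lambda,k)|^2\,dk\Bigr)^{1/2}\leq\|f\|_{L^1(\X)}\qquad(\lambda\in\fa^*),
\]
which follows from Minkowski and the identity $\int_K e^{-2\rho H(g^{-1}k)}\,dk=1$. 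Consequently neither your ``full'' Hausdorff--Young inequality $\|\widetilde f\|_{L^{p'}(d\mu)}\leq C\|f\|_{L^p}$ nor the reverse version $\|g\|_{L^q}\leq C\|\widetilde g\|_{L^{q'}(d\mu)}$ holds with the $L^{p'}$ (resp.\ $L^{q'}$) norm taken jointly in $(\lambda,k)$. The paper repairs this by working in mixed norms with the $K$-variable kept in $L^2$ throughout: the Hausdorff--Young inequality becomes $\bigl(\int_{\fa^*}(\int_K|\widetilde f|^2\,dk)^{p'/2}|c(\lambda)|^{-2}\,d\lambda\bigr)^{1/p'}\leq\|f\|_{L^p}$, and the Paley inequality is proved for the sublinear operator $f\mapsto u(\lambda)^{-1}\|\widetilde f(\lambda,\cdot)\|_{L^2(K)}$, where the weak-$(1,1)$ endpoint now follows from the restriction estimate above. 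The subsequent interpolation and application of the multiplier go through exactly as you outline, since $m$ depends only on $\lambda$ and commutes with the $L^2(K)$-norm.

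On your secondary point: the supercritical range $q>p'$ does not need holomorphic extension. A one-line duality reduction, using $\|T_m\|_{L^p\to L^q}=\|T_{\overline m}\|_{L^{q'}\to L^{p'}}$, reduces immediately to the case $p\leq q'$, where $b=q'$ lies in the interpolation interval $[p,p']$.
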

The estimate \eqref{eqn_T_m_Lp-Lq} says that if the supremum on the right-hand side is finite, then the Fourier multiplier operator $T_m$ is bounded from $L^p(\X)$ to $L^q(\X)$. Furthermore, the theorem establishes the connection between the $L^p \rightarrow L^q$ norm of multiplier operators for $1 < p \leq 2 \leq q < \infty$ and Harish-Chandra's $c$-function. Next, we will demonstrate the application of the Fourier multiplier theorem in the important case of spectral multipliers.

Let $-\Delta$ denote the positive Laplace-Beltrami operator on $\X$, and let $|\rho|^2$ represent the bottom of its $L^2$-spectrum. By the Plancherel theorem, for a measurable function $\varphi$ on $[|\rho|^2, \infty)$, the spectral multiplier operator $\varphi(-\Delta)$ can be defined by
\begin{align}\label{def_spec_m}
\widetilde{\varphi(-\Delta) f}(\lambda,k) = \varphi(|\lambda|^2 + |\rho|^2) \widetilde{f}(\lambda, k), \quad \lambda \in \mathfrak{a}^*, k \in K,
\end{align}
for which we obtain the following result.
\begin{theorem}\label{thm_spec_mul_int}
 Let $\X$ be a general symmetric space of noncompact type and let $1<p\leq 2 \leq q<\infty$. Assume that $\varphi: (|\rho|^2, \infty)  \rightarrow \C$ is a function such that $|\varphi|$ decreases monotonically and continuously on $(|\rho|^2, \infty)$, with $\lim_{s\rightarrow \infty} |\varphi(s)| = 0$.   Then the spectral multiplier operator $\varphi(-\Delta)$  defined by \eqref{def_spec_m} satisfies the following estimates
      \begin{equation}\label{est_spec_op}
      \begin{aligned}
        \|\varphi(-\Delta)\|_{L^p(\X) \rightarrow L^q(\X)}  & \leq C_{p,q} \max \begin{cases} \sup\limits_{|\rho|^2 < s\leq 1+|\rho|^2}  |\varphi(s)| 
              \left( s-  |\rho|^2 \right)^{ \frac{\nu}{2}(\frac{1}{p}-\frac{1}{q})},\\
             \sup\limits_{s\geq 1+ |\rho|^2}  |\varphi(s)|  \left( s- |\rho|^2\right)^{\frac{n}{2}(\frac{1}{p}-\frac{1}{q})} ,
          \end{cases}
   \end{aligned}
   \end{equation}
   where  $ \|\varphi(-\Delta)\|_{L^p(\X) \rightarrow L^q(\X) }$ denotes the operator norm of $\varphi(-\Delta)$ from $L^p(\X)$ to $L^q (\X)$ and $n, \nu$ denote the topological dimension and pseudo-dimension of $\X$.
\end{theorem}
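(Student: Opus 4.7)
The plan is to reduce Theorem \ref{thm_spec_mul_int} directly to Theorem \ref{thm_Lp-Lq_int} applied to the symbol $m(\lambda) := \varphi(|\lambda|^2 + |\rho|^2)$. By \eqref{def_spec_m}, this $m$ is precisely the Fourier multiplier symbol associated with $\varphi(-\Delta)$, and it is automatically Weyl-invariant (depending on $\lambda$ only through $|\lambda|$) and bounded under the hypotheses on $\varphi$. So it will suffice to bound
\[
\Phi := \sup_{\alpha > 0}\,\alpha \left( \int_{\{ \lambda \in \fa^* : |\varphi(|\lambda|^2+|\rho|^2)| > \alpha \}} |c(\lambda)|^{-2}\, d\lambda \right)^{\frac{1}{p}-\frac{1}{q}}
\]
by the right-hand side of \eqref{est_spec_op}.

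Next, I would use the continuity and monotonicity of $|\varphi|$ to parameterize the superlevel sets sharply. For each $\alpha$ in the range of $|\varphi|$ there is a unique $s_\alpha > |\rho|^2$ with $|\varphi(s_\alpha)| = \alpha$, and the superlevel set $\{s > |\rho|^2 : |\varphi(s)| > \alpha\}$ equals the interval $(|\rho|^2, s_\alpha)$. Pulling back under $\lambda \mapsto |\lambda|^2 + |\rho|^2$, the corresponding level set in $\fa^*$ becomes the Euclidean ball $B(0, r_\alpha)$ of radius $r_\alpha = (s_\alpha - |\rho|^2)^{1/2}$. Reparameterizing the outer supremum from $\alpha$ to $s = s_\alpha$ then rewrites $\Phi$ as
\[
\sup_{s > |\rho|^2}\, |\varphi(s)| \left( \int_{B(0, (s-|\rho|^2)^{1/2})} |c(\lambda)|^{-2}\, d\lambda \right)^{\frac{1}{p}-\frac{1}{q}}.
\]

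The final step is to invoke the standard two-regime Plancherel ball estimate in $\fa^*$: there exists $C > 0$ such that $\int_{B(0,r)} |c(\lambda)|^{-2}\, d\lambda \leq C r^\nu$ for $0 < r \leq 1$ and $\leq C r^n$ for $r > 1$, where $\nu$ is the pseudo-dimension and $n$ the topological dimension of $\X$. This is a classical consequence of Harish-Chandra's product formula for $|c(\lambda)|^{-2}$ --- homogeneous near the origin and polynomial at infinity, with the two resulting degrees matching $\nu$ and $n$ after integration in $\fa^*$ --- and I would quote it from the preliminary section on harmonic analysis on $\X$. Substituting it and splitting the supremum according to whether $r_\alpha \leq 1$ (equivalently $s \leq 1 + |\rho|^2$) or $r_\alpha > 1$ produces exactly the two cases of \eqref{est_spec_op}. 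Overall the reduction is nearly cost-free once Theorem \ref{thm_Lp-Lq_int} is available; the only point requiring care is matching the regime thresholds, which is routine bookkeeping, so the main obstacle lies entirely in Theorem \ref{thm_Lp-Lq_int} itself rather than in this corollary.
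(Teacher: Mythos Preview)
Your proposal is correct and follows essentially the same route as the paper: reduce to Theorem \ref{thm_Lp-Lq_int}, use the monotonicity and continuity of $|\varphi|$ to rewrite the superlevel sets as balls $\{|\lambda| < (s-|\rho|^2)^{1/2}\}$, and then apply the pointwise estimate \eqref{est_c-2,hr} for $|c(\lambda)|^{-2}$ to obtain the two-regime ball bound $\int_{B(0,r)}|c(\lambda)|^{-2}\,d\lambda \lesssim r^{\nu}$ for $r\le 1$ and $\lesssim r^{n}$ for $r\ge 1$. The only cosmetic difference is that the paper carries out the radial integration explicitly rather than quoting the ball estimate as a black box, and it does not insist on the uniqueness of $s_\alpha$ (which would require strict monotonicity), but this is immaterial to the argument.
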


Interestingly, this result asserts that the $L^p \rightarrow L^q$ norm of the spectral multiplier operator $\varphi(-\Delta)$ depends primarily on the behavior near $|\rho|^2$ and the growth rate at infinity of the function $s \rightarrow |\varphi(s)|$. However, we would like to note that if $\varphi$ satisfies the hypotheses of Theorem \ref{thm_spec_mul_int}  and $\lim_{s\rightarrow {|\rho|^2}{+}} |\varphi(s )|$ is finite, or in particular, if  $|\varphi|$  extends continuously on $[|\rho|^2, \infty)$, then the supremum in the first line of \eqref{est_spec_op} is always finite. In that case, the statement could be simplified by only retaining the supremum in the second line of \eqref{est_spec_op}, which governs the $L^p \rightarrow L^q$  boundedness. 

Moreover, the estimate for the operator norm of $\varphi(-\Delta)$ can be applied to functions of the Laplace-Beltrami operator on $\X$. Specifically, we derive the $L^p \rightarrow L^q$ estimates for operators of the form \begin{align}\label{defn_Rz^s}
    \mathcal{R}_z^{\sigma}:=(zI - \Delta)^{-\frac{\sigma}{2}}
    \end{align} 
    by applying Theorem \ref{thm_spec_mul_int} with $\varphi(s) = (z+s)^{-\frac{\sigma}{2}}$ and $1<p\leq 2 \leq q<\infty$. We consider $z \in \mathbb{C}$ with $\Re z \geq -|\rho|^2$ and $\sigma \in \C$ with $\Re \sigma \geq 0$ to ensure that the operators $\mathcal{R}_z^{\sigma}$ are bounded on $L^2(\X)$. The $L^p \rightarrow L^q$ boundedness properties of operators of the form $\mathcal{R}_z^{\sigma}$, for suitable values of $z$ and $\sigma$, have been extensively studied in various settings (see \cite[Sec. 4]{An92} for a detailed discussion and appropriate references).

In the context of noncompact type symmetric spaces, Stanton and Tomas \cite[Sec. 6]{ST78} initiated the study of the boundedness of the operator $ (-\Delta)^{-\frac{\sigma}{2}}$ in the rank-one setting (see also \cite{LR82, DST88}). Anker and Lohou\'e extended these rank-one results to symmetric spaces with real form \cite{AL86}. For general symmetric spaces of noncompact type, the first result in this direction is due to Varopoulos \cite{Var88}, who studied the $L^p \rightarrow L^q$ boundedness of the operator $(-\Delta -|\rho|^2)^{-\frac{1}{2}}$. Lohou\'e \cite{Lo89}  further generalized Varopoulos's results to a broader family of operators. Specifically, Lohou\'e proved that the family of operators \begin{align*}
    (-\Delta -|\rho|^2)^{-\frac{\sigma}{2}}, \quad \sigma \in \C \text{ with } \Re \sigma \geq 0 
\end{align*} maps $L^p(\X)$ to $L^q(\X)$ only if $p \leq 2 \leq q$. Additionally, he showed that if $1 < p \leq 2 \leq q < \infty$ and $\Re \sigma \geq n(1/p - 1/q)$, then $(-\Delta -|\rho|^2)^{-\frac{\sigma}{2}}$ is bounded from $L^p(\X)$ to $L^q(\X)$. Subsequently, Taylor \cite{Tay89} and Anker \cite{An90, An92} provided a nearly complete description of the $L^p \rightarrow L^q$ boundedness for the entire family of operators $\mathcal{R}_{z}^{\sigma}$ for $ \Re z>-|\rho|^{2}$, excluding certain critical indices that were finally addressed in \cite{CGM93}. Recently, in \cite{KRZ_23} (see also \cite{KKR24}), the second author with his collaborators studied the operators $\mathcal{R}_{\zeta}^{\sigma}$ for $\zeta, \sigma  \in \mathbb{R}$ with $\zeta \geq -|\rho|^{-2}, \sigma\geq 0
$, thoroughly examining the necessary aspects as well.

These aforementioned results rely on quite explicit knowledge of the kernels of the operators involved. Our approach, however, is quite different. As a consequence of Theorem \ref{thm_spec_mul_int}, we derive the following result. Additionally, we have obtained estimates for the norms of the operators $\mathcal{R}_{z}^{\sigma}$ (in Section \ref{subsec_potential}).
\begin{corollary}\label{cor_potentials}
Let $1 < p \leq 2 \leq q < \infty$. The operator $\mathcal{R}_{z}^{\sigma}$, defined in \eqref{defn_Rz^s}, is bounded from $L^p(\X)$ to $L^q(\X)$ if
\begin{align*}
\Re z > -|\rho|^2 \quad \text{and} \quad \Re \sigma \geq n \left( \frac{1}{p} - \frac{1}{q} \right).
\end{align*}
 Furthermore, for the limit case $\Re z = -|\rho|^2$, the statement above holds true if $\Im z \neq 0$.  If in addition $\Im z = 0$, that is $z =-|\rho|^2$, the same result is true provided that
   \begin{align*}
       \nu \left( \frac{1}{p} -\frac{1}{q}\right)   \geq \Re \sigma \geq n \left( \frac{1}{p} -\frac{1}{q}\right).
   \end{align*}
\end{corollary}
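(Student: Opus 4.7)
The plan is to deduce Corollary \ref{cor_potentials} from Theorem \ref{thm_spec_mul_int} applied to the spectral multiplier
\[
\varphi(s) := (z+s)^{-\sigma/2},
\]
which, by \eqref{def_spec_m} and \eqref{defn_Rz^s}, gives precisely $\varphi(-\Delta) = \mathcal{R}_{z}^{\sigma}$. I would first verify the hypotheses of Theorem \ref{thm_spec_mul_int}. Writing
\[
|\varphi(s)| = |z+s|^{-\Re\sigma/2}\,\exp\!\left(\tfrac{\Im\sigma}{2}\arg(z+s)\right),
\]
note that under each of the three hypotheses on $z$ one has $\Re(z+s) = \Re z + s > 0$ for all $s>|\rho|^2$, so $|z+s|$ is strictly increasing, $\arg(z+s)\in(-\pi/2,\pi/2)$, and $|\varphi|$ is continuous on $(|\rho|^2,\infty)$ with $|\varphi(s)|\to 0$ at infinity (the edge case $\Re\sigma=0$ forces $p=q=2$ and is immediate). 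When $\sigma$ or $z$ is not real, the bounded phase factor may destroy strict monotonicity; this is handled by the majorization $|\varphi(s)| \leq e^{\pi|\Im\sigma|/2}\,|z+s|^{-\Re\sigma/2}$, whose right-hand side is a genuine monotone multiplier of the same order, so that only the distribution function of $|\varphi|$ is needed to feed into (the proof of) Theorem \ref{thm_spec_mul_int}, or ultimately into Theorem \ref{thm_Lp-Lq_int}.

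The substance of the argument is then a calculation of the two suprema in \eqref{est_spec_op}. For the \emph{local} supremum over $|\rho|^2 < s \leq 1+|\rho|^2$: if $\Re z > -|\rho|^2$ then $|z+s|$ is bounded below by $\Re z + |\rho|^2 > 0$, so $|\varphi|$ is bounded near $|\rho|^2$ and the weight $(s-|\rho|^2)^{(\nu/2)(1/p-1/q)}$ is itself bounded on this interval, hence the local supremum is trivially finite. The same holds if $\Re z = -|\rho|^2$ and $\Im z \neq 0$, via $|z+s| \geq |\Im z|$. In the critical case $z = -|\rho|^2$, however, $|z+s| = s-|\rho|^2$ degenerates at the endpoint, and the local supremum is comparable to
\[
\sup_{|\rho|^2 < s \leq 1 + |\rho|^2}(s-|\rho|^2)^{(\nu/2)(1/p-1/q) - \Re\sigma/2},
\]
which is finite exactly when $\Re\sigma \leq \nu(1/p-1/q)$; this accounts for the extra upper bound on $\Re\sigma$ in the third case. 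For the \emph{global} supremum over $s\geq 1+|\rho|^2$, in all three cases $|z+s|\asymp s$, so the expression is comparable to $\sup_{s\geq 1+|\rho|^2} s^{(n/2)(1/p-1/q) - \Re\sigma/2}$, finite iff $\Re\sigma \geq n(1/p-1/q)$, yielding the common lower bound on $\Re\sigma$.

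Plugging these two bounds into \eqref{est_spec_op} delivers each of the three assertions of Corollary \ref{cor_potentials}. The main obstacle I foresee is the legitimacy of invoking Theorem \ref{thm_spec_mul_int} when the phase factor of $\varphi$ breaks literal monotonicity of $|\varphi|$, for complex $\sigma$ or non-real $z$; this should be settled by the dominating-majorant comparison described above, or, failing that, by appealing directly to Theorem \ref{thm_Lp-Lq_int} with the monotone majorant's distribution function in place of that of $|\varphi|$. The remaining work is routine calculus.
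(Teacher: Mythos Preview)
Your approach is essentially identical to the paper's: apply Theorem \ref{thm_spec_mul_int} with $\varphi(s)=(z+s)^{-\sigma/2}$ and then analyze the local and global suprema in \eqref{est_spec_op} exactly as you describe. In fact your treatment of the phase factor for complex $\sigma$ is more careful than the paper's, which simply writes $|\varphi_{z,\sigma}(s)|=((s+\Re z)^2+(\Im z)^2)^{-\Re\sigma/4}$ (valid only for real $\sigma$); your majorant argument via Remark \ref{rem_m<est} is the right fix and is precisely the tool the paper provides for this purpose.
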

By performing a similar calculation as in Corollary \ref{cor_potentials}, we derive the following Sobolev-type embedding on $\X$. For $\sigma>0$ and $1<p<\infty$, we define the Sobolev space $H^{\sigma,p}(\X)$ as the image of $L^p(\X)$ under the operator $(-\Delta)^{\frac{\sigma}{2}}$, equipped with the norm $\| f\|_{H^{\sigma,p}(\X)} := \|(-\Delta)^{\frac{\sigma}{2}} f\|_{L^p(\X)} .$
\begin{corollary}[Sobolev type embedding]\label{cor_sobolev}
    Let $1<p\leq 2 \leq q< \infty$ and $\sigma,\kappa \in \R$. Then for  $\sigma-\kappa \geq  {n(\frac{1}{p}-\frac{1}{q})}  $, there exists a constant $C=C(\sigma, \kappa,p,q)>0$ such that  for all $ f\in C_c^{\infty}(\X)$
    \begin{align*}
        \left\| (-\Delta)^{\frac{\kappa}{2}} f\right\|_{L^q(\X)} \leq C_{p,q}   \left\| (-\Delta)^{\frac{\sigma}{2}} f\right\|_{L^p(\X)}.
    \end{align*}
    In particular, by taking $\kappa=0$ in the inequality above, we obtain the following Sobolev-type embedding for $\sigma \geq  {n(\frac{1}{p}-\frac{1}{q})}$
    \begin{align}\label{eqn_sob_emb}
    \left\|  f\right\|_{L^q(\X)}  \leq   C_{p,q}   \|f\|_{H^{\sigma, p}(\X)}.  
    \end{align}
\end{corollary}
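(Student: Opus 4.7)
The strategy is to reduce the stated inequality to an $L^p \to L^q$ bound for a single spectral multiplier operator, and then to apply Theorem~\ref{thm_spec_mul_int}. Set $s := \sigma - \kappa$, so that $s \geq n(\frac{1}{p} - \frac{1}{q}) \geq 0$ by hypothesis. Since the spectrum of $-\Delta$ is contained in $[|\rho|^{2}, \infty)$ with $|\rho|^{2} > 0$, all real fractional powers $(-\Delta)^{\tau/2}$ are well-defined via spectral calculus, and under the Fourier transform \eqref{def_spec_m} the composition of two such powers corresponds to pointwise multiplication of their symbols. Hence, for $f \in C_{c}^{\infty}(\X)$, I would write
\[
(-\Delta)^{\kappa/2} f \;=\; (-\Delta)^{-s/2}\bigl((-\Delta)^{\sigma/2} f\bigr),
\]
so that it suffices to prove the operator bound
\[
\bigl\| (-\Delta)^{-s/2} \bigr\|_{L^{p}(\X) \to L^{q}(\X)} \leq C_{p,q}
\]
whenever $s \geq n(\frac{1}{p} - \frac{1}{q})$. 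The borderline case $s = 0$ forces $p = q = 2$ and is immediate; from now on I assume $s > 0$.

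To prove the operator bound, I apply Theorem~\ref{thm_spec_mul_int} with $\varphi(t) := t^{-s/2}$. Since $s > 0$, the function $|\varphi|$ is continuous and strictly decreasing on $(|\rho|^{2}, \infty)$ with $\lim_{t \to \infty} |\varphi(t)| = 0$, so the hypotheses of the theorem are satisfied. It then remains to check that both suprema in \eqref{est_spec_op} are finite. On the bounded interval $|\rho|^{2} < t \leq 1 + |\rho|^{2}$, the factor $\varphi(t) \leq |\rho|^{-s}$ is uniformly bounded while $(t - |\rho|^{2})^{\nu(1/p - 1/q)/2} \leq 1$, so the first supremum is finite. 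For $t \geq 1 + |\rho|^{2}$, I would estimate
\[
\varphi(t)\,(t - |\rho|^{2})^{n(1/p - 1/q)/2} \leq t^{-s/2}\, t^{\,n(1/p - 1/q)/2} = t^{\,(n(1/p - 1/q) - s)/2},
\]
whose exponent is non-positive precisely under the standing assumption $s \geq n(\frac{1}{p} - \frac{1}{q})$; hence the second supremum is bounded by $1$. Combining these bounds with Theorem~\ref{thm_spec_mul_int} yields the claimed estimate, and specialising to $\kappa = 0$ gives the Sobolev embedding \eqref{eqn_sob_emb} directly from the definition of $\|\cdot\|_{H^{\sigma, p}(\X)}$.

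I do not anticipate a serious obstacle: the proof is essentially a direct application of Theorem~\ref{thm_spec_mul_int} once the spectral multiplier $\varphi(t) = t^{-s/2}$ is identified. The only mildly delicate point is to justify the composition identity $(-\Delta)^{\kappa/2} f = (-\Delta)^{-s/2}\bigl((-\Delta)^{\sigma/2} f\bigr)$ on $f \in C_{c}^{\infty}(\X)$, which follows via the Fourier-side definition \eqref{def_spec_m} from pointwise multiplication of symbols and relies on the fact that $(-\Delta)^{\sigma/2} f$ has sufficient decay on $\mathfrak{a}^{*}$ for the composition to be well-defined.
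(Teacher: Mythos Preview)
Your proposal is correct and follows essentially the same approach as the paper: the paper simply remarks that the proof of Corollary~\ref{cor_sobolev} ``follows similarly as in \eqref{eqn_Tz,s_lpq}'', which amounts to applying Theorem~\ref{thm_spec_mul_int} to the multiplier $\varphi(t)=t^{-(\sigma-\kappa)/2}$ exactly as you do. Your explicit verification of the two suprema in \eqref{est_spec_op} fills in the details the paper leaves implicit.
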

We like to mention that, in contrast to the Euclidean analogue of \eqref{eqn_sob_emb}, the set of admissible pairs $(1/p, 1/q)$ that satisfy the Sobolev inequality \eqref{eqn_sob_emb} is much larger. In the context of $\R^N$, the balance condition $1/p - 1/q = \sigma/N$ with $\sigma < N$ is always necessary, see \cite{Bec95, Bec08, DCGT17}. 
 Furthermore, in noncompact type symmetric spaces, it is known that the Sobolev inequality \eqref{eqn_sob_emb} holds for $\sigma > 0$ and $1 < p \leq q < \infty$ if and only if $\sigma \geq n(1/p -1/q)$ (see for instance \cite[(1.13)]{KRZ_23}).

The estimate for the heat semigroup has been intensively studied by many authors. For a comprehensive overview, we refer to \cite{Ank88} and \cite[Sec. 3]{An92, CGM93}. We will conclude our applications by deriving an upper bound for the heat semigroup using Theorem \ref{thm_spec_mul_int}. Let us consider the following $\sigma$-fractional heat equation for $ \sigma >0$ and $1 < p \leq 2$:
\begin{equation}
\begin{aligned}\label{eqn_frac heat}
\frac{\partial}{\partial t}u(x,t) + (-\Delta)^{\sigma} u(x,t)=0 , \quad t > 0, \quad u(0,x) = u_0(x) \in L^p(\X).
\end{aligned}
\end{equation}
Using Theorem \ref{thm_spec_mul_int}, we derive an upper bound for the solution operator $u(t, x) = e^{-t (-\Delta)^{\sigma}}u_0(x)$.
\begin{corollary}\label{cor_est_fr_heat}
Let $1<p\leq 2\leq q <\infty$ and $\sigma >0$. Then we have shown that
   \begin{align*}
        \| e^{-t (-\Delta)^{\sigma}} \|_{L^p(\X) \rightarrow L^q(\X)} \lesssim \begin{cases} t^{-\frac{n}{2\sigma }(\frac{1}{p}-\frac{1}{q})} \quad &\text{if }0<t \leq 1 ,  \\
             t^{-\frac{\nu}{2} (\frac{1}{p}-\frac{1}{q})} e^{- t|\rho|^{2\sigma}} \quad & \text{if } t\geq 1.
        \end{cases}
   \end{align*}
\end{corollary}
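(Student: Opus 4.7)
The plan is to apply Theorem \ref{thm_spec_mul_int} to the function $\varphi(s) = e^{-t s^{\sigma}}$, parametrized by $t > 0$. This function is positive, continuous, strictly monotonically decreasing on $[|\rho|^2, \infty)$, and vanishes at infinity, so the hypotheses of the theorem hold uniformly in $t$. Writing $a = \tfrac{n}{2}(1/p - 1/q)$ and $b = \tfrac{\nu}{2}(1/p - 1/q)$, the estimate \eqref{est_spec_op} reduces the problem to bounding
\[
A(t) := \sup_{|\rho|^2 < s \leq 1 + |\rho|^2} e^{-t s^{\sigma}}\,(s - |\rho|^2)^{b}, \qquad B(t) := \sup_{s \geq 1 + |\rho|^2} e^{-t s^{\sigma}}\,(s - |\rho|^2)^{a}.
\]
The goal is then to show $A(t), B(t) \lesssim t^{-a/\sigma}$ for $0 < t \leq 1$ and $A(t), B(t) \lesssim t^{-b} e^{-t |\rho|^{2\sigma}}$ for $t \geq 1$.

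For the small-time regime $0 < t \leq 1$, both factors defining $A(t)$ are bounded by $1$, so $A(t) \leq 1 \lesssim t^{-a/\sigma}$. For $B(t)$, one uses $(s-|\rho|^2)^{a} \leq s^{a}$ together with the substitution $v = t s^{\sigma}$ to obtain $B(t) \leq t^{-a/\sigma} \sup_{v \geq 0} e^{-v} v^{a/\sigma}$, which is a finite constant times $t^{-a/\sigma}$. This gives the first line of the claimed estimate.

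For the large-time regime $t \geq 1$, the key ingredient is a Taylor-type inequality $s^{\sigma} - |\rho|^{2\sigma} \geq c_1 (s - |\rho|^2)$ valid on $(|\rho|^2,\, 1 + |\rho|^2]$, with some $c_1 > 0$ depending only on $\sigma$ and $|\rho|$. Factoring out $e^{-t|\rho|^{2\sigma}}$ and substituting $u = s - |\rho|^2$ gives $A(t) \leq e^{-t|\rho|^{2\sigma}} \sup_{u \in (0,1]} u^{b} e^{-c_1 t u}$, and elementary optimization of the Laplace-type expression $u^b e^{-c_1 t u}$ on $(0,\infty)$ yields $A(t) \lesssim t^{-b} e^{-t |\rho|^{2\sigma}}$. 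For $B(t)$, differentiating $e^{-t s^{\sigma}}(s-|\rho|^2)^{a}$ in $s$ shows that, for $t$ larger than some threshold $t_0 = t_0(a, \sigma, |\rho|)$, the unique critical point lies below $1 + |\rho|^2$, so the function is decreasing on $[1 + |\rho|^2, \infty)$. Hence $B(t) \leq e^{-t(1+|\rho|^2)^{\sigma}} = e^{-t c_0} e^{-t|\rho|^{2\sigma}}$, with $c_0 := (1+|\rho|^2)^{\sigma} - |\rho|^{2\sigma} > 0$, and since $e^{-t c_0} \lesssim t^{-b}$ on $[1, \infty)$, this bound is absorbed into the estimate for $A(t)$. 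The residual compact range $t \in [1, t_0]$ is dispatched by a trivial uniform bound.

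The principal technical point, though short, is the large-time analysis of $A(t)$: the gain $t^{-b}$ carrying the pseudo-dimension exponent $\nu$ (instead of the topological dimension $n$ appearing in the small-time regime) arises precisely from localizing the supremum near the spectral edge $s = |\rho|^2$ and applying the Laplace-method optimization to the resulting one-dimensional profile. Everything else is routine calculus once Theorem \ref{thm_spec_mul_int} is in hand.
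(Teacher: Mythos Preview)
Your proof is correct and, in fact, more direct than the paper's. Both arguments start from Theorem~\ref{thm_spec_mul_int}, but the paper then bifurcates: for $\sigma = 1$ it carries out essentially the same calculus as you do (via the substitution $\gamma = (s - |\rho|^2)^{1/2}$ and explicit location of the critical point), while for general $\sigma > 0$ it abandons the supremum formulation of \eqref{est_heat_op}, proves instead an auxiliary integral bound (Corollary~\ref{cor_spec_op_heat}), and evaluates the resulting integrals by appealing to an external lemma \cite[Lemma~2]{CGM95}. Your linearization $s^\sigma - |\rho|^{2\sigma} \geq c_1(s - |\rho|^2)$ on $(|\rho|^2,\,1+|\rho|^2]$ (a one-line consequence of the mean value theorem, valid for all $\sigma>0$ since $|\rho|>0$) is precisely the device that makes the supremum approach go through uniformly in $\sigma$; combined with the Laplace-type optimization of $u^b e^{-c_1 t u}$, it yields the $t^{-b}$ factor carrying the pseudo-dimension directly. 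This bypasses both the case split and the outside reference. The paper mentions ``technical challenges'' in treating the fractional case straight from \eqref{est_heat_op}, but your argument shows they are not genuine obstacles.
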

\begin{remark}
We would like to mention that the estimates in the corollary above for the fractional heat semigroup when $t$ is near zero are sharp and coincide with those in \cite{CGM93, CGM95}, where precise estimates of $ \| e^{-t (-\Delta)^{\sigma}} \|_{L^p(\X) \rightarrow L^q(\X)}$ were obtained. However, for $t$ approaching infinity, we did not achieve the sharpest estimates. Specifically, the authors in \cite[Theorem 3.2]{CGM93} and \cite[Theorem 1]{CGM95} provided sharp estimates for fractional heat semigroups for all $1 \leq p \leq q \leq \infty$. Comparing our results with theirs, we note that for $1 < p \leq 2 \leq  q < \infty$ and large $t$, the exponential decay parts are identical, but the polynomial terms obtained by the authors in \cite{CGM93, CGM95} are sharper.
\end{remark}
\subsection{Hausdorff-Young-Paley inequalities on noncompact type symmetric spaces} A critical aspect of our study involves obtaining analogues of  Hausdorff-Young and Paley inequalities in these non-Euclidean settings. Unlike in Euclidean settings, the (Poisson) kernel used to define the (Helgason) Fourier transform $\widetilde{f}$ on symmetric spaces is not bounded. This necessitates modifications to the Euclidean results and naturally suggests the use of restriction inequalities, leading to the following version of Hausdorff-Young inequality and its dual inequality on general symmetric spaces of noncompact type. For $1 \leq p \leq \infty$, let $p' = p/(p-1)$ denotes the conjugate exponent of $p$. When $p=\infty$, by $(\int |F|^{p})^{\frac{1}{p}}$ we will represent the $L^{\infty}$ norm of $F$.
\begin{theorem}\label{thm_HY_UD}
     Let $\X$ be a general symmetric space of noncompact type and let $1\leq p\leq 2 $. Then we have for all $f\in C_c^{\infty}(\X)$
\begin{align}\label{HY_ud_int}
    \left( \frac{1}{|W|}\int_{\mathbb{\fa^*}} \left(\int_K|\widetilde{f}(\lambda, k)|^{2} \, dk\right)^{\frac{p'}{2}} |c(\lambda)|^{-2} \,d \lambda \right)^{\frac{1}{p'}} \leq  \|f\|_{L^p(\X)}.
\end{align}    
Furthermore, if $p \geq 2$, then we have for all $f\in C_c^{\infty}(\X)$
\begin{align}\label{eqn_HYP_dual-int}
     \|f\|_{L^p(\X)} \leq   \left( \frac{1}{|W|}\int_{\mathbb{\fa^*}} \left(\int_K|\widetilde{f}(\lambda, k)|^{2} \, dk\right)^{\frac{p'}{2}} |c(\lambda)|^{-2} \,d \lambda \right)^{\frac{1}{p'}} .
\end{align}
\end{theorem}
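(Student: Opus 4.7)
My plan is to establish \eqref{HY_ud_int} for $1 \leq p \leq 2$ by interpolating between the $p=2$ case (Plancherel) and the $p=1$ case (where the decisive input is the unitarity of the principal series), and then to deduce the dual inequality \eqref{eqn_HYP_dual-int} for $p \geq 2$ by a standard duality argument.

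At $p=2$ the Plancherel theorem on $\X$ gives
\[
\|f\|_{L^2(\X)}^2 = \frac{1}{|W|} \int_{\mathfrak{a}^*} \int_K |\widetilde f(\lambda, k)|^2\, dk\, |c(\lambda)|^{-2}\, d\lambda ,
\]
which is precisely equality in \eqref{HY_ud_int}. At $p=1$ I would establish the endpoint bound
\[
\sup_{\lambda \in \mathfrak{a}^*} \left(\int_K |\widetilde f(\lambda, k)|^2\, dk\right)^{\!1/2} \leq \|f\|_{L^1(\X)}
\]
via the unitary principal series: for real $\lambda$ the induced representation $\pi_\lambda$ on $L^2(K/M)$ is unitary, so $\|\pi_\lambda(f)\|_{\mathrm{op}} \leq \|f\|_{L^1(G)}$; the function $\widetilde f(\lambda, \cdot)$, being right $M$-invariant, descends to $K/M$ and coincides (up to the usual sign convention in $\lambda$) with $\pi_\lambda(f)\mathbf{1}_{K/M}$, so applying the operator-norm bound to the unit vector $\mathbf{1}_{K/M}$ gives the claim. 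Equivalently, one can expand the squared $L^2(K)$-norm as a double integral and apply the Harish--Chandra identity $\int_K e^{(i\lambda-\rho) H(x^{-1}k)}\, e^{(-i\lambda-\rho) H(y^{-1}k)}\, dk = \phi_\lambda(y^{-1}x)$ together with the pointwise bounds $|\phi_\lambda| \leq \phi_0 \leq 1$ valid for real $\lambda$.

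With the two endpoints in hand, I would obtain \eqref{HY_ud_int} for all $p \in [1,2]$ by vector-valued Riesz--Thorin interpolation applied to $\mathcal{F}: f \mapsto \widetilde f$, viewed as an operator into the mixed-norm space $L^{p'}(\mathfrak{a}^*;\, L^2(K);\, \frac{|c(\lambda)|^{-2}}{|W|}\, d\lambda)$. Vector-valued interpolation is available because $L^2(K)$ is a Hilbert space, and the complex interpolation of the endpoint spaces $L^\infty(\mathfrak{a}^*;L^2(K))$ and $L^2(\mathfrak{a}^*;L^2(K))$ produces $L^{p'}$ in the $\lambda$-variable with interpolation constant $1$.

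The dual inequality \eqref{eqn_HYP_dual-int} then follows by duality. The Plancherel pairings identify the Hilbert-space adjoint of $\mathcal{F}$ with the inverse Fourier transform $\mathcal{F}^{-1}$, so dualizing the bound $\mathcal{F} : L^{p'}(\X) \to L^p(\mathfrak{a}^*; L^2(K); \mu)$ (valid for $1 \leq p' \leq 2$) yields $\mathcal{F}^{-1} : L^{p'}(\mathfrak{a}^*; L^2(K); \mu) \to L^p(\X)$ with the same operator norm; combining with Fourier inversion $f = \mathcal{F}^{-1} \widetilde f$ on $C_c^\infty(\X)$ gives \eqref{eqn_HYP_dual-int} after renaming $p \leftrightarrow p'$ so that $p \geq 2$. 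The main subtlety lies in the $p=1$ endpoint: in contrast with $\R^N$, the Fourier kernel $e^{(i\lambda - \rho) H(x^{-1} k)}$ is unbounded in $x$, so no pointwise estimate on $|\widetilde f(\lambda, k)|$ by $\|f\|_{L^1}$ is available; integrating in $k$ first turns the problem into boundedness of the Harish--Chandra $\Xi$-function $\phi_0 \leq 1$, which is the key analytic input (captured abstractly by the unitarity of $\pi_\lambda$).
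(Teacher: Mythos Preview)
Your proposal is correct and follows the same architecture as the paper: Plancherel at $p=2$, the restriction estimate $\|\widetilde f(\lambda,\cdot)\|_{L^2(K)}\le \|f\|_{L^1(\X)}$ at $p=1$, mixed-norm interpolation for \eqref{HY_ud_int}, and duality for \eqref{eqn_HYP_dual-int}. The only cosmetic differences are in execution: at $p=1$ the paper bypasses the principal series and the Harish--Chandra identity entirely, applying Minkowski's integral inequality directly to $\widetilde f(\lambda,\cdot)$ and reducing to the exact formula $\int_K e^{-2\rho H(x^{-1}k)}\,dk=1$ (the $F\equiv 1$ case of Lemma~\ref{k(xk)_dif}), which is a touch more elementary than your routes via $\pi_\lambda$-unitarity or $|\phi_\lambda|\le\phi_0\le 1$; and for \eqref{eqn_HYP_dual-int} the paper writes out the Plancherel pairing and applies H\"older twice rather than invoking the abstract adjoint, but this is purely stylistic.
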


The Hausdorff-Young inequality, in its classical form, asserts that for $1 \leq p \leq 2$, the Fourier transform maps functions from $L^p$ space to $L^{p'}$ space with respect to Plancherel measure.  Paley's inequality addresses whether, for a given $p \in [1, 2)$, there exists a measure $\mu$ such that the Fourier transform operator is bounded from $L^p$ to $L^p(\mu)$. H{\"o}rmander \cite[Theorem 1.10]{Hor60} answered this affirmatively by proving Paley's inequality on $\mathbb{R}^N$. We offer the following analogue of Paley's inequality in the context of noncompact type symmetric spaces, which can also be seen as a weighted version of Plancherel’s formula for $L^p(\X)$ with $1 < p \leq 2$.
\begin{theorem}\label{thm_paley_uni}  Let $\X$ be a general symmetric space of noncompact type and  let $1<p\leq 2$.
 Assume that $u$ is a positive function on $\fa^*$ satisfying the following condition 
	\begin{align}\label{cond_u_HYP}
	 	 \|u\|_{c,\infty} := \sup_{\alpha>0} \alpha  \int\limits_{\substack{\{\lambda \in \fa^* : u(\lambda)>\alpha\} }} |c(\lambda)|^{-2} \,d\lambda <\infty.
	 \end{align}
  Then we have for all $f\in C_c^{\infty}(\X)$
  \begin{align}\label{eqn_paley}
       \left( \int_{\mathbb{\fa^*}} \left(\int_K|\widetilde{f}(\lambda, k)|^{2} \, dk\right)^{\frac{p}{2}} u(\lambda)^{2-p} |c(\lambda)|^{-2} \,d \lambda \right)^{\frac{1}{p}} \leq C_p  \|u\|_{c,\infty}^{\frac{2}{p}-1}\|f\|_{L^p(\X)}.
  \end{align}
\end{theorem}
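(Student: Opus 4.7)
The plan is to write the desired integral as an $L^p$ norm of a product and to estimate it via H\"older's inequality in Lorentz spaces, using a Lorentz-refined Hausdorff--Young bound as the other input. Setting $Af(\lambda) := \bigl(\int_K |\widetilde f(\lambda, k)|^2 \, dk\bigr)^{1/2}$, the left-hand side of \eqref{eqn_paley} is exactly $\|Af \cdot u^{(2-p)/p}\|_{L^p(\fa^*, |c|^{-2} d\lambda)}$. The case $p = 2$ reduces immediately to Plancherel (since $u^{2-p} \equiv 1$), so throughout I would focus on $1 < p < 2$.

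The first step is to upgrade Theorem \ref{thm_HY_UD}. The sublinear operator $T \colon f \mapsto Af$ is strongly bounded from $L^1(\X)$ into $L^\infty(\fa^*)$ (by the endpoint $p=1$ of Theorem \ref{thm_HY_UD}, which gives $Af(\lambda) \leq \|f\|_{L^1}$) and from $L^2(\X)$ into $L^2(\fa^*, |c|^{-2} d\lambda)$ (by Plancherel). The Marcinkiewicz--Hunt interpolation theorem in Lorentz spaces then yields, for $1 < p < 2$ and any $r \in [1, \infty]$, the mapping $T \colon L^{p, r}(\X) \to L^{p', r}(\fa^*, |c|^{-2} d\lambda)$. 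Choosing $r = p$ produces the refined bound
\[
\|Af\|_{L^{p', p}(|c|^{-2} d\lambda)} \leq C_p \|f\|_{L^p(\X)},
\]
which is strictly stronger than $Af \in L^{p'} = L^{p', p'}$. In parallel, the hypothesis \eqref{cond_u_HYP} says precisely that $u$ lies in $L^{1, \infty}(\fa^*, |c|^{-2} d\lambda)$ with quasi-norm $\|u\|_{c, \infty}$, and a one-line computation on the distribution function shows $u^{(2-p)/p} \in L^{p/(2-p), \infty}(\fa^*, |c|^{-2} d\lambda)$ with quasi-norm at most $\|u\|_{c, \infty}^{(2-p)/p}$. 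O'Neil's H\"older inequality in Lorentz spaces, with the exponent identities $\tfrac{1}{p} = \tfrac{1}{p'} + \tfrac{2-p}{p}$ and $\tfrac{1}{p} = \tfrac{1}{p} + \tfrac{1}{\infty}$, then gives
\[
\bigl\|Af \cdot u^{(2-p)/p}\bigr\|_{L^p(|c|^{-2} d\lambda)} \leq C \, \|Af\|_{L^{p', p}(|c|^{-2} d\lambda)} \, \|u^{(2-p)/p}\|_{L^{p/(2-p), \infty}(|c|^{-2} d\lambda)}.
\]
Substituting the two preceding bounds and raising both sides to the $p$-th power yields \eqref{eqn_paley}, with a constant $C_p$ that absorbs $|W|^{1/p'}$ and O'Neil's constant (noting that $2/p-1 = (2-p)/p$ matches the exponent in the stated theorem).

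The main obstacle is the refinement in the first step. If one invoked only the Hausdorff--Young bound in $L^{p'} = L^{p', p'}$, the O'Neil step would land in $L^{p, p'}$, which is strictly larger than $L^p$ when $p < 2$, and the argument would break down. The key point is that the Marcinkiewicz--Hunt theorem, applied to strong-type endpoints, produces a Lorentz bound whose second index is inherited from the domain; choosing the domain to be $L^{p, p} = L^p$ provides exactly the second index needed to balance the $\infty$ in the $u$-factor and recover the full $L^p$ norm on the left-hand side of \eqref{eqn_paley}.
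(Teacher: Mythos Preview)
Your argument is correct and takes a genuinely different route from the paper. The paper absorbs the weight into the operator by setting $\mathcal{T}f(\lambda)=Af(\lambda)/u(\lambda)$ and into the measure by working on $(\fa^*, u^2|c|^{-2}d\lambda)$; it then checks directly that $\mathcal{T}$ is strong type $(2,2)$ (Plancherel) and weak type $(1,1)$ (using the restriction bound $Af(\lambda)\le\|f\|_{L^1}$ together with the hypothesis $\|u\|_{c,\infty}<\infty$), and applies the classical Marcinkiewicz theorem. Your approach instead keeps the operator unweighted, interpolates the same two endpoints via Hunt's Lorentz refinement to obtain $A:L^p(\X)\to L^{p',p}(|c|^{-2}d\lambda)$, and then brings in the weight through O'Neil's H\"older inequality with $u^{(2-p)/p}\in L^{p/(2-p),\infty}$. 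The paper's proof is more elementary in that it requires only the standard Marcinkiewicz theorem, with no Lorentz machinery; your proof is more modular, isolating a Lorentz-sharpened Hausdorff--Young inequality of independent interest and making transparent why the weak-$L^1$ condition on $u$ is exactly what is needed. Both arguments rest on the same two inputs (the restriction estimate and Plancherel), just packaged differently.
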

In the theorems above, we consider the Fourier inequalities by treating the Fourier transform $\widetilde{f}$ as a function on $\mathfrak{a}^* \times K$. However, the holomorphic extension of the Fourier transform of an $L^p(\X)$ function $(1 \leq p < 2)$ is perhaps the most distinctive feature of noncompact type symmetric spaces. Recognizing this characteristic, Ray and Sarkar \cite{RS_09} used Stein's analytic interpolation theory to provide an appropriate version of the Hausdorff-Young inequality using the Plancherel measure for harmonic $NA$ groups (particularly in rank-one symmetric spaces of noncompact type). However, their version of the Hausdorff-Young inequality may not be useful for \textit{reinterpolation analytically} with other inequalities. More generally, using Stein's interpolation theory for a family of analytic linear operators $\{ T_z : 0 \leq \Re z \leq 1 \}$, we may obtain a norm estimate for $\{ T_{\theta}: 0 \leq \theta \leq 1\}$ but not for $\{T_{\theta+i\xi}: 0 \leq \theta \leq 1, \xi \in \R\}$, which is required for \textit{reinterpolation analytically}.

In the following, we provide a version of the Hausdorff-Young inequality using the Stein-Weiss analytic interpolation theorem that can be interpolated with other inequalities to obtain new results. Specifically, we interpolate this Hausdorff-Young inequality with the Paley inequality (Theorem \ref{thm_paley_int}) to derive Hausdorff-Young-Paley inequalities on non-unitary duals in the context of rank-one symmetric spaces of noncompact type. Additionally, we investigated the relationship between our version of the Hausdorff-Young inequality and that by Ray and Sarkar (Theorem \ref{thm_RS_HYinq}). We have proved the following version of the Hausdorff-Young inequality on non-unitary duals.
\begin{theorem}\label{thm_H-Y_int}
    Let $\X$ be a rank one symmetric space of noncompact type, $1\leq p\leq 2$, and $\rho_p := (2/p-1)\rho$. Then for any $p\leq q\leq  p'$, we have
     \begin{align*} 
        \left( \int_{\mathfrak{a}^*} \left( \int_K  \left|\widetilde {f}(\lambda +i\rho_q , k) \right|^q  \frac{ |\lambda+i\rho_q|^q}{|\lambda+i\rho_q +i 2\rho|^q}  \,  dk \right)^{\frac{p'}{q}}  (1+|\lambda|)^{n-1} d\lambda \right) ^{\frac{1}{p'}} \leq C_{p,q}  \|f\|_{L^p(\X)},
    \end{align*}
    for all $f\in C_c^{\infty}(\X)$, where $n\geq 1$ is the dimension of $\X$.
\end{theorem}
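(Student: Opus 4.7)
The plan is to establish the inequality via Stein-Weiss analytic interpolation between two endpoint estimates: the Plancherel endpoint $(p, q) = (2, 2)$ and the $L^1$ endpoint $p = 1$ with $q \in [1, \infty]$. Interpolating between these two families then produces the stated inequality for all intermediate $1 < p < 2$ and $q \in [p, p']$. At $(p, q) = (2, 2)$ one has $\rho_2 = 0$, and the asserted estimate reduces to the weighted Plancherel inequality
\begin{equation*}
\int_{\fa^*} \int_K |\widetilde{f}(\lambda, k)|^2 \frac{|\lambda|^2}{|\lambda + 2i\rho|^2} (1 + |\lambda|)^{n-1}\, dk\, d\lambda \leq C \|f\|_{L^2(\X)}^2,
\end{equation*}
which follows from the ordinary Plancherel theorem once the weight $|\lambda|^2 (1+|\lambda|)^{n-1} / |\lambda + 2i\rho|^2$ is shown to be pointwise dominated by the Plancherel density $|c(\lambda)|^{-2}$; this comparison is a direct consequence of the Gindikin-Karpelevich formula and Stirling's asymptotics, both sides behaving like $|\lambda|^2$ near the origin and like $|\lambda|^{n-1}$ at infinity.

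For the second endpoint, at $p = 1$ and each $q \in [1, \infty]$, I would show
\begin{equation*}
\sup_{\lambda \in \fa^*} \left( \int_K |\widetilde{f}(\lambda + i\rho_q, k)|^q \frac{|\lambda + i\rho_q|^q}{|\lambda + i\rho_q + 2i\rho|^q}\, dk \right)^{1/q} \leq C_q \|f\|_{L^1(\X)},
\end{equation*}
by establishing the extreme cases $q = 1$ and $q = \infty$ directly and then interpolating in $q$. At $q = 1$, $\rho_1 = \rho$, and Fubini combined with the standard Poisson kernel normalization yields $\|\widetilde{f}(\lambda + i\rho, \cdot)\|_{L^1(K)} \leq \|f\|_{L^1(\X)}$ uniformly in $\lambda$. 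At $q = \infty$, $\rho_\infty = -\rho$, and the exponential factor in the Helgason Fourier transform becomes a unitary character, giving the trivial pointwise bound $|\widetilde{f}(\lambda - i\rho, k)| \leq \|f\|_{L^1(\X)}$. A Stein analytic interpolation on the subsidiary strip $\{z : 0 \leq \Re z \leq 1\}$, using the analytic shift $\lambda \mapsto \lambda + i(2z - 1)\rho$ together with a holomorphic weight whose modulus restricts to the stated rational factor at $\Re z = 1/q$, then covers the intermediate $q$.

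With the two endpoints in hand, I apply Stein-Weiss analytic interpolation on the strip $\{z \in \C : 0 \leq \Re z \leq 1\}$ to a single analytic family $\{T_z\}$ that realizes the $(p, q) = (2, 2)$ estimate at $\Re z = 0$ and the $p = 1$ estimate (with $q$ chosen so that interpolation produces the target exponent) at $\Re z = 1$; the intermediate lines $\Re z = \theta \in (0, 1)$ then sweep out all $1 < p < 2$ and $q \in [p, p']$. The main technical obstacle is the construction of an admissible analytic family: the complex shift $\rho_{q(z)}$ and the rational weight $|\lambda + i\rho_q|^q / |\lambda + i\rho_q + 2i\rho|^q$ must be encoded as a single operator-valued holomorphic function of $z$, of at most polynomial growth as $|\Im z| \to \infty$. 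I would handle this by extracting a holomorphic branch of the weight (well-defined on the strip, away from the zeros of the denominator) and multiplying by a regularizing factor $e^{\epsilon z^2}$, which is removed at the end by letting $\epsilon \to 0$; the required polynomial growth of the shifted Fourier transform in $\Im \lambda$ is verified from standard uniform estimates on the Poisson kernel within its tube of holomorphy.
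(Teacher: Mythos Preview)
Your proposal is correct and follows essentially the same strategy as the paper: both build an analytic family by complex-shifting the spectral parameter and multiplying by the holomorphic factor $(\lambda+z)/(\lambda+z+2i\rho)$, then interpolate between the Plancherel $(2,2)$ endpoint and the two $L^1$ endpoints at $\Im z=\pm\rho$. The organization differs slightly: the paper runs two analytic interpolations (between $(2,2)$ and $(1,\infty)$ to reach $q=p$, and between $(2,2)$ and $(\infty,\infty)$ to reach $q=p'$) and then fills in $p<q<p'$ by a Riesz--Thorin step; you instead run a subsidiary analytic interpolation to cover all $q\in[1,\infty]$ at $p=1$ first, then a single analytic interpolation per target $(p,q)$. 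Both routes are valid and use the same three endpoint estimates.

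One simplification: your regularizing factor $e^{\epsilon z^2}$ and the worry about admissible growth are unnecessary. The paper makes the holomorphic weight completely explicit as $(\lambda+z)/(\lambda+z+2i\rho)$; its denominator never vanishes on the strip $|\Im z|\le\rho$ (since $|\Im(\lambda+z+2i\rho)|\ge\rho$), and the whole factor has modulus at most $1$ there. The only growth comes from the real translation in the $(2,2)$ bound, which is polynomial $(1+|\xi|)^{(n-1)/2}$ and hence already admissible.
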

For rank one case, we established the following Paley-type inequality on non-unitary duals.
\begin{theorem}\label{thm_paley_int}
    Let $ \X$ be a rank one symmetric space of noncompact type and let  $1\leq p\leq 2$. Assume that $u>0$ is a measurable function on  $\fa^*$ such that  
    \begin{align}\label{eqn_int_u}
    \|u\|_{L^1(|c(\lambda)|^{-2}) } :=  \int_{\fa^*}  u(\lambda)  |c(\lambda)|^{-2}  d\lambda <\infty.
    \end{align}
 Then for any $p\leq q\leq  p'$, we have
     \begin{align*}
        &\left( \int_{\fa^*} \left( \int_K  \left|\widetilde {f}(\lambda +i\rho_{q} , k) \right|^{q}  \frac{ |\lambda+i\rho_{q}|^{q}}{|\lambda+i\rho_{q} +i 2\rho|^{q}}  \,  dk \right)^{\frac{p}{q}}  u(\lambda)^{2-p} |c(\lambda)|^{-2} d\lambda \right) ^{\frac{1}{p}} \\
        & \hspace{10cm}\leq C_{p,q} \|u\|_{L^1(|c(\lambda)|^{-2})}^{{\frac{2}{p}-1}} \|f\|_{L^p(\X)},
    \end{align*}
    for all $f\in C_c^{\infty}(\X)$.
\end{theorem}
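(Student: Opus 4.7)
The plan is to prove the estimate by the Stein--Weiss analytic interpolation theorem applied to a carefully chosen analytic family of operators, in parallel with the proof of Theorem \ref{thm_H-Y_int} but now with weighted $L^p$ targets. The two endpoints of the interpolation should be (i) a Plancherel--type bound on $\Re z = 0$, where the imaginary shift vanishes, and (ii) an $L^1$--to--sup bound on $\Re z = 1$, where the evaluation reaches the critical line $\Im\lambda = \rho$ and the factor $\|u\|_{L^1(|c|^{-2})}$ enters.

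First, I would restrict to $f \in C_c^\infty(\X)$, so that $\wtilde{f}(\cdot, k)$ is entire on $\fa^*_{\C}$ for every $k \in K$. On the closed strip $S := \{z \in \C : 0 \leq \Re z \leq 1\}$ I would define the analytic family
\[
T_z f(\lambda, k) := \wtilde{f}\!\left(\lambda + iz\rho,\, k\right) \cdot \frac{\lambda + iz\rho}{\lambda + iz\rho + i2\rho},
\]
whose rational factor is included to cancel the zero of $|c(\lambda)|^{-1}$ coming from the rank--one Gindikin--Karpelevich formula on the critical line $\Im \lambda = \rho$; this is exactly the factor that appears in the statement and it guarantees that $T_z$ is of admissible growth on $S$.

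Next, I would establish the two boundary estimates. On $\Re z = 0$, writing $z = i\xi$, the evaluation $\lambda + iz\rho = \lambda - \xi\rho$ lies on the unitary dual, and Plancherel combined with the comparison $|c(\lambda - \xi\rho)|^{-2} \lesssim P(\xi)\,|c(\lambda)|^{-2}$ for a polynomial $P$ (which follows in rank one from $|c(\lambda)|^{-2} \asymp (1+|\lambda|)^{n-1}$) yields
\[
\left(\int_{\fa^*} \int_K |T_{i\xi}f(\lambda,k)|^2 \, dk \, |c(\lambda)|^{-2}\, d\lambda \right)^{1/2} \leq M_0(\xi)\, \|f\|_{L^2(\X)},
\]
with $M_0(\xi)$ of polynomial growth. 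On $\Re z = 1$, writing $z = 1 + i\xi$, the evaluation $\lambda + iz\rho = (\lambda - \xi\rho) + i\rho$ lies on the critical line; in rank one, $|\wtilde{f}(\mu + i\rho, k)| \lesssim \|f\|_{L^1(\X)}$ uniformly in $(\mu,k)$ from the modulus--one property of the associated Helgason plane wave, and the rational factor is bounded. Integrating against $u\,|c|^{-2}\,d\lambda$ gives
\[
\int_{\fa^*} \left(\int_K |T_{1+i\xi}f(\lambda,k)|^{q_1} dk\right)^{1/q_1} u(\lambda)\, |c(\lambda)|^{-2}\, d\lambda \leq M_1(\xi)\, \|u\|_{L^1(|c|^{-2})}\, \|f\|_{L^1(\X)}
\]
for any $q_1 \in [1, \infty]$, with $M_1(\xi)$ bounded in $\xi$.

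Finally, applying Stein--Weiss at $\theta = 2/p - 1$, the outer exponent interpolates to $p_\theta = p$, the weight interpolates to $u^{\theta p_\theta} = u^{2-p}$, and the constant becomes $M_0^{1-\theta}(M_1 \|u\|_{L^1(|c|^{-2})})^{\theta} \lesssim \|u\|_{L^1(|c|^{-2})}^{2/p - 1}$. Choosing $q_1 = 1$ at the right endpoint produces the claim for $q = p$ and choosing $q_1 = \infty$ produces the claim for $q = p'$. Intermediate values $q \in (p, p')$ follow from one further Riesz--Thorin interpolation in the inner $L^q(K)$--norm between the already established $q = p$ and $q = p'$ inequalities. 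The main obstacle I anticipate is verifying the sub-exponential (in fact polynomial) growth of $M_0(\xi)$ and $M_1(\xi)$ required by Stein--Weiss: this reduces to (a) a uniform $\lambda$--comparison of Plancherel densities under real translation by $\xi\rho$, which uses the explicit rank--one formula, and (b) the rank--one sup-norm bound on the critical line, which relies on the explicit structure of Helgason plane waves and is precisely the place where the rank--one hypothesis is indispensable.
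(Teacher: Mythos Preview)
Your overall architecture (Stein's analytic interpolation applied to the family $T_zf(\lambda,k)=\wtilde f(\lambda+iz\rho,k)\,\dfrac{\lambda+iz\rho}{\lambda+iz\rho+i2\rho}$, with a Plancherel endpoint on $\Re z=0$ and an $L^1$ endpoint on $\Re z=1$) is exactly the paper's, but two steps are incorrect as written.

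\medskip
\textbf{The $q_1=\infty$ endpoint at $\Re z=1$ is false.} On the line $\Im\lambda=+\rho$ the plane wave $e^{(i\lambda-\rho)H(g^{-1}k)}$ has modulus $e^{-2\rho H(g^{-1}k)}$, which is \emph{not} bounded in $k$; only the $L^1(K)$ bound
\[
\int_K|\wtilde f(\mu+i\rho,k)|\,dk\ \le\ \|f\|_{L^1(\X)}
\]
holds (via \eqref{F_C(K)=1}). The modulus--one property you invoke is valid on the \emph{other} boundary $\Im\lambda=-\rho$, which lies outside your strip $0\le\Re z\le 1$. Hence your analytic interpolation at $\theta=2/p-1$ produces only the evaluation line $\lambda+i\rho_p$, i.e.\ the case $q=p$. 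The paper obtains $q=p'$ by running a \emph{second} analytic interpolation between $\Re z=0$ and the line $\Im\lambda=-\rho$, where the genuine $L^\infty(K)$ endpoint $\sup_k|\wtilde f(\mu-i\rho,k)|\le\|f\|_{L^1(\X)}$ is available; this yields the mixed $(p',p)$ bound at $\lambda+i\rho_{p'}$.

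\medskip
\textbf{The ``one further Riesz--Thorin'' for intermediate $q$ does not work as stated.} The two endpoint inequalities you would interpolate live on \emph{different} lines $\Im\lambda=\rho_p$ and $\Im\lambda=\rho_{p'}$, so the inner $L^q(K)$ norms are norms of different functions; there is no common linear operator to which Riesz--Thorin applies. The paper instead fixes $q\in(p,p')$, proves directly a $(q,1)$ restriction bound on the $\rho_q$--line from $L^1(\X)$ (using that the plane wave there has modulus $e^{-(2/q)\rho H}$, so Minkowski plus \eqref{F_C(K)=1} give $(\int_K|\cdot|^q)^{1/q}\le\|f\|_{L^1}$), and Riesz--Thorin interpolates this with the $(q,q)$ bound at $L^q(\X)$ (obtained from the analytic step with $p$ replaced by $q$) to reach $L^p(\X)$.

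\medskip
A minor correction: in rank one $|c(\lambda)|^{-2}\asymp\lambda^2(1+|\lambda|)^{n-3}$, not $(1+|\lambda|)^{n-1}$; the vanishing at $\lambda=0$ is precisely why the rational factor is needed. With that factor the Plancherel comparison under the real shift $\lambda\mapsto\lambda-\xi\rho$ does go through with polynomial growth in $\xi$, so your $\Re z=0$ endpoint is fine once the justification is adjusted.
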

Finally, by applying the Stein-Weiss analytic interpolation theory for mixed norm spaces and analytically interpolating the Hausdorff–Young inequality (Theorem \ref{thm_H-Y_int}) with the Paley inequality (Theorem \ref{thm_paley_int}), we establish the following Hausdorff-Young-Paley inequality on non-unitary duals.
 \begin{theorem}\label{thm_H-Y-P} 
 Let $\X$ be a rank one symmetric space of noncompact type, $1< p\leq 2$, and $p\leq b \leq p'$. Assume that $u>0$ is a measurable function on  $\fa^*$ such that \eqref{eqn_int_u} holds. 
    Then for any $p\leq q_0, q_1\leq p'$, we have the following estimate
     \begin{align*}
        \left( \int_{\mathfrak{a}^*} \left( \int_K  \left|\widetilde {f}(\lambda +i\rho_{q_{\theta}} , k) \right|^{q_\theta}  \frac{ |\lambda+i\rho_{q_{\theta}}|^{q_{\theta}}}{|\lambda+i\rho_{q_{\theta}} +i 2\rho|^{q_{\theta}}}  \,  dk \right)^{\frac{b}{q_{\theta}}}  u(\lambda)^{1-\frac{b}{p'}} |\lambda|^{  \frac{2(p'-b)}{p'(2-p)}} (1+|\lambda|)^{n-3+\frac{2(b-p)}{p'(2-p)}}  d\lambda \right) ^{\frac{1}{b}} \\
         \leq C_{p,q_{\theta}} {\|u\|_{L^{1}(|c(\lambda)|^{-2})}^{\frac{1}{b}-\frac{1}{p'}}} \|f\|_{L^p(\X)},
    \end{align*}
    for all $f\in C_c^{\infty}(\X)$, where 
    \begin{align*}
    \frac{1}{b}= \frac{1-\theta}{p}+\frac{\theta}{p'} \qquad \text{and} \qquad  \frac{1}{q_{\theta}}= \frac{1-\theta}{q_0}+\frac{\theta}{q_1}.
    \end{align*}
 \end{theorem}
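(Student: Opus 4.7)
The plan is to derive this estimate by analytic interpolation in the Stein--Weiss framework between the Paley inequality of Theorem \ref{thm_paley_int} (which will serve as the $\theta=0$ endpoint) and the Hausdorff--Young inequality of Theorem \ref{thm_H-Y_int} (which will serve as the $\theta=1$ endpoint). The first step is a consistency check at the endpoints. At $\theta=0$ one has $b=p$, $q_{\theta}=q_0$, the exponent $1-b/p'$ reduces to $2-p$ (since $1-p/p'=2-p$), and the factor $|\lambda|^{2(p'-b)/(p'(2-p))}(1+|\lambda|)^{n-3+2(b-p)/(p'(2-p))}$ collapses to $|\lambda|^{2}(1+|\lambda|)^{n-3}$, which is comparable to the rank-one Plancherel density $|c(\lambda)|^{-2}$; hence the $\theta=0$ endpoint matches Theorem \ref{thm_paley_int}. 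At $\theta=1$ one has $b=p'$, $q_{\theta}=q_1$, the factor $u^{1-b/p'}$ disappears, and the $\lambda$-weight reduces to $(1+|\lambda|)^{n-1}$, recovering Theorem \ref{thm_H-Y_int}. Likewise, the exponent of $\|u\|_{L^{1}(|c(\lambda)|^{-2})}$ in the claimed bound, namely $1/b-1/p'$, equals $2/p-1$ at $\theta=0$ and $0$ at $\theta=1$, so the endpoint constants match.

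Next I would introduce an analytic family $\{T_z\}$ of linear operators on the closed strip $0\le\Re z\le 1$, defined on $f\in C_c^{\infty}(\X)$ by
\begin{equation*}
T_z f(\lambda,k) := \widetilde{f}(\lambda + i\rho_{q_z}, k)\;\frac{\lambda+i\rho_{q_z}}{\lambda+i\rho_{q_z}+i2\rho}\;\Psi_z(\lambda),
\end{equation*}
where $1/q_z=(1-z)/q_0+z/q_1$ (so that $\rho_{q_z}$ depends analytically on $z$) and $\Psi_z$ is an analytic multiplier built from suitable complex powers of $u(\lambda)$, $|\lambda|$, $1+|\lambda|$, and $|c(\lambda)|^{-2}$, calibrated so that $|\Psi_{it}(\lambda)|$ produces the Paley weight $u(\lambda)^{2-p}|c(\lambda)|^{-2}$ and $|\Psi_{1+it}(\lambda)|$ produces the Hausdorff--Young weight $(1+|\lambda|)^{n-1}$. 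Holomorphy of $\widetilde{f}$ in the tube over the convex hull of the two shifts (a standard consequence of the exponential decay of the Poisson kernel applied to $C_c^{\infty}$ functions) guarantees that $T_z$ is an admissible analytic family.

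The third step is to apply Theorem \ref{thm_paley_int} on the line $\Re z=0$ and Theorem \ref{thm_H-Y_int} on the line $\Re z=1$, after observing that an imaginary shift $z\mapsto z+it$ only translates the spectral parameter along $\fa^*$ and contributes oscillatory factors of modulus one to $\Psi_z$. The endpoint bounds thus hold uniformly in $t\in\R$, and the Stein--Weiss analytic interpolation theorem for weighted mixed-norm spaces $L^b_{\lambda}(L^{q_z}_k)$ (as in Bergh--L\"ofstr\"om) applies to yield the estimate of Theorem \ref{thm_H-Y-P} with the interpolated constant $\|u\|_{L^1(|c(\lambda)|^{-2})}^{1/b-1/p'}$.

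The main obstacle lies in the simultaneous interpolation of the outer exponent $b$, the inner exponent $q_\theta$, the holomorphic shift $\rho_{q_z}$, and the analytic weight $\Psi_z$, in tandem with the verification that uniform-in-$t$ control on the two vertical boundary lines suffices to run the three-lines argument in the vector-valued (mixed-norm) setting. The bookkeeping for $\Psi_z$ is the delicate point: its real and imaginary exponents must be chosen so that the product of all interpolated pieces at $\Re z=\theta$ reproduces exactly the composite weight $u(\lambda)^{1-b/p'}|\lambda|^{2(p'-b)/(p'(2-p))}(1+|\lambda|)^{n-3+2(b-p)/(p'(2-p))}|c(\lambda)|^{-2}$ appearing under the integral in the statement. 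Once this calibration is complete, the three-lines lemma closes the proof.
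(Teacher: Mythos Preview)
Your overall architecture—analytic interpolation of Stein--Weiss type between the Paley inequality (Theorem~\ref{thm_paley_int}) at $\theta=0$ and the Hausdorff--Young inequality (Theorem~\ref{thm_H-Y_int}) at $\theta=1$—matches the paper's. The endpoint consistency checks are correct, and absorbing the weights into an analytic multiplier $\Psi_z$ is equivalent to the paper's use of Corollary~\ref{cor_sw_ana}.

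There is, however, a genuine gap in your boundary estimates. You write that an imaginary shift $z\mapsto z+it$ ``only translates the spectral parameter along $\fa^*$'' and that ``the endpoint bounds thus hold uniformly in $t\in\R$.'' The first observation is right: since $1/q_{it}=1/q_0+it(1/q_1-1/q_0)$, one computes $i\rho_{q_{it}}=i\rho_{q_0}-2t(1/q_1-1/q_0)\rho$, so the Fourier argument becomes $\lambda+\zeta_t+i\rho_{q_0}$ with a real shift $\zeta_t$ proportional to $t$. But the second claim is false when $q_0\neq q_1$: the weights $u(\lambda)$, $|c(\lambda)|^{-2}$, and $(1+|\lambda|)^{n-1}$ are not translation-invariant, so after the real shift the Paley and Hausdorff--Young inequalities do not apply with a $t$-independent constant. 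A change of variable moves the shift onto the weight, and controlling $|c(\lambda+\zeta_t)|^{-2}$ or $(1+|\lambda-\zeta_t|)^{n-1}$ in terms of the original weight costs a factor $(1+|\zeta_t|)^{n-1}$.

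This is exactly why the paper does not invoke Theorem~\ref{thm_paley_int} directly but first proves the generalized version Theorem~\ref{thm_paley_new}, which allows an arbitrary real shift $\zeta$ at the price of a constant $(1+|\zeta|)^{(n-1)/2}$; the analogous bound on the Hausdorff--Young side is~\eqref{Psi_q1_p'}. The boundary constants $A_j(\xi)$ therefore grow polynomially in $\xi$, which is harmless for Stein--Weiss interpolation (Corollary~\ref{cor_sw_ana} only requires $\log|A_j(\xi)|\le Ce^{a|\xi|}$ with $a<\pi$). Your outline would be repaired by replacing the uniformity claim with these polynomially growing bounds and citing the generalized endpoint theorems; without them the three-lines argument does not close.
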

\begin{remark}\label{rem_HYP_fix}
\begin{enumerate}
    \item  Taking $q_0 =q_1$,  we get the following version of the Hausdorff-Young-Paley inequality from the theorem above on a fixed line, that is for $1<p\leq 2$ and any $p\leq b, q \leq p'$, we have 
\begin{equation}\label{eqn_HYP_nP}
     \begin{aligned}
        \left( \int_{\fa^*} \left( \int_K  \left|\widetilde {f}(\lambda +i\rho_{q} , k) \right|^{q}  \frac{ |\lambda+i\rho_{q}|^{q}}{|\lambda+i\rho_{q} +i 2\rho|^{q}}  \,  dk \right)^{\frac{b}{q}}  u(\lambda)^{1-\frac{b}{p'}} |\lambda|^{  \frac{2(p'-b)}{p'(2-p)}} (1+|\lambda|)^{n-3+\frac{2(b-p)}{p'(2-p)}}  d\lambda \right) ^{\frac{1}{b}} \\
         \leq C_{b,q} {\|u\|_{L^{1}(|c(\lambda)|^{-2})}^{\frac{1}{b}-\frac{1}{p'}}} \|f\|_{L^p(\X)},
    \end{aligned}
    \end{equation}
     \item By substituting $b=p'$ and $b=p$   into the inequality \eqref{eqn_HYP_nP}, it is reduced to Hausdorff-Young inequality (Theorem \ref{thm_H-Y_int}) and Paley's inequality (Theorem \ref{thm_paley_int}), respectively.
     \end{enumerate}
\end{remark}
\begin{corollary}\label{cor_of_HYP}
 Let $\X$ be a rank one symmetric space of noncompact type, $1<p<2$, and $p\leq b, q \leq p'$.  If $q\not =2$, then assuming \eqref{eqn_int_u} the subsequent Hausdorff-Young-Paley inequality with respect to the Placherel measure follows from  \eqref{eqn_HYP_nP}
     \begin{align}\label{eqn_HYP_P}
        \left( \int_{\fa^*} \left( \int_K  \left|\widetilde {f}(\lambda +i\rho_{q} , k) \right|^{q}    \,  dk \right)^{\frac{b}{q}}  u(\lambda)^{1-\frac{b}{p'}}  |c(\lambda)|^{-2} d\lambda \right) ^{\frac{1}{b}}
         \leq C_{b,q} {\|u\|_{L^{1}(|c(\lambda)|^{-2})}^{\frac{1}{b}-\frac{1}{p'}}} \|f\|_{L^p(\X)}.
    \end{align}
\end{corollary}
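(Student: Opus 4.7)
The plan is to deduce \eqref{eqn_HYP_P} from the inequality \eqref{eqn_HYP_nP} of Remark \ref{rem_HYP_fix} by a pointwise comparison of weights on $\fa^*$. First I extract the $k$-independent factor $|\lambda+i\rho_q|^q/|\lambda+i\rho_q+i2\rho|^q$ from the inner $K$-integral in \eqref{eqn_HYP_nP}. Raised to the power $b/q$ it becomes $(|\lambda+i\rho_q|/|\lambda+i\rho_q+i2\rho|)^b$, so the left-hand side of \eqref{eqn_HYP_nP} takes the form
\[
\left(\int_{\fa^*}\!\left(\int_K|\widetilde f(\lambda+i\rho_q,k)|^q\,dk\right)^{\!\frac{b}{q}} W(\lambda)\,d\lambda\right)^{\!\frac{1}{b}},
\]
where
\[
W(\lambda)=\left(\frac{|\lambda+i\rho_q|}{|\lambda+i\rho_q+i2\rho|}\right)^{\!b} u(\lambda)^{1-\frac{b}{p'}} \lambda^{\alpha}(1+|\lambda|)^{\beta},\ \ \alpha=\frac{2(p'-b)}{p'(2-p)},\ \ \beta=n-3+\frac{2(b-p)}{p'(2-p)}.
\]
It therefore suffices to establish the pointwise weight bound $u(\lambda)^{1-\frac{b}{p'}}|c(\lambda)|^{-2}\lesssim W(\lambda)$ uniformly in $\lambda\in\fa^*$, that is
\[
|c(\lambda)|^{-2}\lesssim \left(\frac{|\lambda+i\rho_q|}{|\lambda+i\rho_q+i2\rho|}\right)^{\!b}\lambda^{\alpha}(1+|\lambda|)^{\beta}.
\]

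Next, I would combine the standard rank-one Plancherel asymptotic $|c(\lambda)|^{-2}\asymp \lambda^{2}(1+|\lambda|)^{n-3}$ (recorded in Section \ref{sec_Preliminaries}) with two elementary facts: the algebraic identity $(p'-p)/(p'(2-p))=1$, which yields $\alpha+\beta=n-1$, and the monotonicity $\alpha\in[0,2]$ as $b$ ranges over $[p,p']$ (saturated at $b=p$). The verification then splits into the two regimes $|\lambda|\to\infty$ and $|\lambda|\to 0$. The large-$|\lambda|$ regime is immediate, since the ratio tends to $1$ and $\alpha+\beta=n-1$ matches the exponent $2+(n-3)=n-1$ on the left, giving the same polynomial growth at infinity on both sides.

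The main obstacle, and the sole reason for the hypothesis $q\ne 2$, is the regime $|\lambda|\to 0$. If $q=2$ then $\rho_q=(2/q-1)\rho=0$, so $|\lambda+i\rho_q|=|\lambda|$ vanishes linearly at the origin; the ratio then contributes an extra factor $|\lambda|^{b}$ on the right, producing total vanishing of order $\lambda^{\alpha+b}$, strictly faster than $\lambda^{2}$ once $b>2-\alpha$, which breaks the bound. For $q\ne 2$, however, $\rho_q\ne 0$ and both $|\lambda+i\rho_q|$ and $|\lambda+i\rho_q+i2\rho|$ tend to nonzero constants at the origin, so the ratio is bounded above and below by positive constants on $\fa^*\cong\R$; the inequality $\alpha\le 2$ then exactly matches the near-origin behaviour $|c(\lambda)|^{-2}\asymp\lambda^{2}$. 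Assembling the two regimes yields the pointwise weight bound, and consequently \eqref{eqn_HYP_P}.
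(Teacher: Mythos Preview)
Your proof is correct and follows essentially the same approach as the paper: a pointwise comparison of the weight $|c(\lambda)|^{-2}$ against $\lambda^{\alpha}(1+|\lambda|)^{\beta}$ in the two regimes $|\lambda|\le 1$ and $|\lambda|>1$, using $\alpha\le 2$ near the origin and $\alpha+\beta=n-1$ at infinity. Your treatment is in fact more explicit than the paper's, since you spell out why the factor $(|\lambda+i\rho_q|/|\lambda+i\rho_q+i2\rho|)^{b}$ is harmless precisely when $q\neq 2$ (the paper relegates this observation to the earlier Corollary~\ref{cor_Our_HY>}).
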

We conclude this section with an outline of the article. The next section provides essential background on harmonic analysis related to semisimple Lie groups and symmetric spaces of noncompact type, along with a review of relevant previous results. In Section \ref{sec_HYP_UD}, we establish the Hausdorff-Young and Paley inequalities for unitary duals in general symmetric spaces of noncompact type, specifically Theorems \ref{thm_HY_UD} and \ref{thm_paley_uni}. Section \ref{sec_Lp-Lq_multiplier} addresses the $L^p\rightarrow L^q$ boundedness of Fourier multipliers, presenting Theorems \ref{thm_Lp-Lq_int} and \ref{thm_spec_mul_int} and their applications. In Section \ref{sec_HYP_ND}, we prove the holomorphic versions of the Hausdorff-Young and Paley inequalities, specifically Theorems \ref{thm_H-Y_int} through \ref{thm_H-Y-P}. In Section \ref{sec_NA_group}, we outline how to extend our results to Harmonic $NA$ groups. Finally, in Section \ref{sec_SW_analytic}, we develop the theory of Stein-Weiss analytic interpolation, completing our study.
\section{{Preliminaries}} \label{sec_Preliminaries}
\subsection{Generalities}
The symbols $\N$, $\Z$, $\R$, and $\C$ will respectively denote the set of natural numbers, the ring of integers, and the fields of real and complex numbers. For $z \in \C$, we use the notations $\Re z$ and $\Im z$ to refer to the real and imaginary parts of $z$, respectively. We will adhere to the standard convention of using the letters $C$, $C_1$, $C_2$, etc., to represent positive constants whose values may vary from one line to another. Throughout this article, we use $X \lesssim Y$ or $X \gtrsim Y$ to indicate the estimates $X \leq CY$ or $X \geq CY$ for some absolute constant $C > 0$. Additionally, the notation $X \asymp Y$ will be used to signify that both $X \lesssim Y$ and $X \gtrsim Y$ hold. 
\subsection{Riemannian symmetric spaces}
Here, we review some general facts and necessary preliminaries regarding semisimple Lie groups and harmonic analysis on Riemannian symmetric spaces of noncompact type. Most of this information is already known and can be found in \cite{GV88, Helgason_GA}. We will use the standard notation from \cite{Ion03, BP22}. To keep the article self-contained, we will include only the results necessary for this article, following the preliminaries outlined in \cite{Ion03, BP22}.
   
    Let $G$ be a connected, noncompact, real semisimple Lie group with a finite center, and let $\mathfrak{g}$ be its Lie algebra. Let  $\theta$ be a Cartan involution of $\mathfrak{g}$, inducing the decomposition   $\mathfrak{g} = \mathfrak{k} \oplus \mathfrak{p}$. Let $K= \exp \mathfrak{k} $ be a maximal compact subgroup of $G$, and let $\mathbb{X} = G / K$ be an associated symmetric space with origin $\mathbf{0} = \{eK\}$. 

    Let $\mathfrak{a}$ be a maximal abelian subalgebra of $\mathfrak{p}$. The dimension of $\mathfrak{a}$ determines the rank of the symmetric space $\mathbb{X}$, denoted by $l$. The Killing form of $\mathfrak{g}$ induces an inner product $\left\langle \cdot, \cdot \right\rangle$ on $\mathfrak{a}$, allowing us to identify $\mathfrak{a}$ with $\mathbb{R}^l$. Let $\mathfrak{a}^*$ and $\mathfrak{a}_{\mathbb{C}}^*$ denote the real and complex duals of $\mathfrak{a}$, respectively. For any $\lambda \in \mathfrak{a}^*$, let $H_{\lambda}$ be the unique element of $\mathfrak{a}$ such that
    \begin{align*}
    \lambda(H) = \left\langle H_{\lambda}, H \right\rangle, \quad \text{for all } H \in \mathfrak{a}.
    \end{align*}
   With this, we transfer the inner product $\left\langle \cdot, \cdot \right\rangle$ from $\mathfrak{a}$ to $\mathfrak{a}^*$, and, by abuse of notation, denote it again by $\left\langle \cdot, \cdot \right\rangle$, defined by the rule
    \begin{align*}
    \left\langle \lambda, \lambda' \right\rangle= \left\langle H_{\lambda}, H_{\lambda'} \right\rangle, \quad \text{for all }  \lambda, \lambda' \in \mathfrak{a}^*.
\end{align*}
We further extend the inner product on $\mathfrak{a}^*$ to a $\mathbb{C}$-bilinear form on $\mathfrak{a}_{\mathbb{C}}^*$. This inner product allows us to identify $\mathfrak{a}^*$ with $\mathbb{R}^l$. In what follows, we will use the notation $|\cdot|$ to refer to both norms associated with the inner products on $\mathfrak{a}$ and $\mathfrak{a}^*$.

Let $\Sigma \subset \mathfrak{a}^*$ be the set of restricted roots for the pair $(\mathfrak{g}, \mathfrak{a})$. Let $M$ denote the centralizer of $\mathfrak{a}$ in $K$, and let $W$ represent the Weyl group associated with $\Sigma$. For each $\alpha \in \Sigma$, let $\mathfrak{g}_\alpha$ be the corresponding root space, with $m_\alpha = \dim \mathfrak{g}_\alpha$. We choose a system of positive roots $\Sigma^+$ and define the associated positive Weyl chamber as $\mathfrak{a}^+ = \{H \in \mathfrak{a} : \alpha(H) > 0 \text{ for all } \alpha \in \Sigma^+\}$. Let $\Sigma_0^+$ denote the set of positive indivisible roots. The half-sum of all positive roots, counted with their multiplicities, is given by
\begin{align*} 
\rho = \frac{1}{2} \sum_{\alpha \in \Sigma^+} m_\alpha \alpha. 
\end{align*}

Let $n$ represent the dimension of $\mathbb{X}$ and $\nu$ the pseudo-dimension (or dimension at infinity) of $\mathbb{X}$. These quantities are given by the following expressions
\begin{align*} n = l + \sum_{\alpha \in \Sigma^+} m_\alpha, \qquad \nu = l + 2 | \Sigma_0^+ |. \end{align*} In general, we have $l \geq 1$, $n \geq 2$, and $\nu \geq 3$. 

We denote by $\mathfrak{n} = \bigoplus_{\alpha \in \Sigma^+} \mathfrak{g}_\alpha$ the nilpotent Lie subalgebra of $\mathfrak{g}$ associated with $\Sigma^+$, and by $N$ and $A$ the corresponding Lie subgroups of $\mathfrak{n}$ and $\mathfrak{a}$, respectively, in $G$. The Iwasawa decomposition of $G$ is given by $G = KAN$, meaning each element $g \in G$ can be uniquely expressed as \begin{align*} g = K(g) \exp(H(g))N(g), \quad K(g) \in K,\, H(g) \in \mathfrak{a}, \, N(g) \in N. \end{align*} Let $A^{+} = \exp(\mathfrak{a}^+)$, and let $\overline{A^{+}}$ denote the closure of $A^{+}$ in $G$. We then obtain the polar decomposition $G = K \overline{A^{+}} K$. Under this decomposition, the Haar measure on $G$ can be expressed as follows.

\begin{equation*}
\begin{aligned}
\int_G f(g) d g & =\int_K \int_{\mathfrak{a}^{+}} \int_K f\left(k_1 (\exp H) k_2\right) J(\exp H) \, d k_1\, d H\, d k_2
\end{aligned}
\end{equation*}
where  $J(\exp H)=C_0 \prod_{\alpha \in \Sigma^{+}}(\sinh \alpha(H))^{m_\alpha} $,  $C_0$ is a normalizing constant,  $K$ is equipped with its normalized Haar measure, and $dH$ is the Lebesgue measure on $\R^l$. If $f$ is a function on $\mathbb{X} = G / K$, it can be viewed as a right $K$-invariant function on $G$. Additionally, we recall the following integration formula on $K$ (see \cite[Lemma 5.19, p.197]{He_00}). 
\begin{lemma}\label{k(xk)_dif}
    Let $g \in G$. The mapping $T_g:k \mapsto K(gk)$ is a diffeomorphism of $K$ onto itself and 
    \begin{equation}\label{F_C(K)}
        \int_K F(K(gk)) \, dk =\int_K F(k) e^{-2 \rho H(g^{-1}k)} \, dk, \quad F\in C(K),
    \end{equation}
    where $K(gk)$ represents the unique component of $gk$ in the maximal compact subgroup $K$ from the Iwasawa decomposition.
\end{lemma}
The particular case $F\equiv 1$ in \eqref{F_C(K)}, will be useful for us:
 \begin{equation}\label{F_C(K)=1}
         \int_K   e^{-2 \rho H(g^{-1}k)} \, dk=1,\quad \text{for all }g \in G.
    \end{equation}
\subsection{Fourier transform on Riemannian symmetric spaces}
The Fourier transform $\widetilde{f}$ of a smooth, compactly supported function $f$ on $\mathbb{X}$ is defined on $\mathfrak{a}_{\mathbb{C}}^* \times K$ and is given by (see \cite[page 199]{Helgason_GA}):
\begin{equation}\label{defn:hft}
\widetilde{f}(\lambda,k) = \int_{G} f(g) e^{(i\lambda- \rho)H(g^{-1}k)} dg,\:\:\:\:\:\:  \lambda \in \mathfrak{a}_{\mathbb{C}}^*, k \in K,
\end{equation}
where $H(g^{-1} k)$ represents the unique $\mathfrak{a}$-component from the Iwasawa decomposition of $g^{-1}k$.  Since $M$ normalizes $N$, the function $k \mapsto \widetilde{f}(\lambda, k)$ is right $M$-invariant. It is known that if $f \in L^1(\mathbb{X})$, then $\widetilde{f}(\lambda, k)$ is continuous with respect to $\lambda \in \mathfrak{a}^*$ for almost every $k \in K$ (and is, in fact, holomorphic in $\lambda$ on a domain containing $\mathfrak{a}^*$). Furthermore, if $\widetilde{f} \in L^1(\mathfrak{a}^* \times K, |c(\lambda)|^{-2}  \,d\lambda \,dk)$, the following Fourier inversion formula holds for almost every $gK \in \mathbb{X}$ (\cite[Ch. III]{Helgason_GA})
\begin{align*}
f(gK)= \frac{1}{|W|} \int_{\fa^*} \int_K \widetilde{f}(\lambda, k)~e^{-(i\lambda+\rho)H(g^{-1}k)} ~ |c(\lambda)|^{-2}\,d\lambda~dk,
\end{align*}
 where $c(\lambda)$ is the Harish-Chandra's $c$-function. Moreover, the Plancherel theorem states \cite[Ch. III, Theorem 1.5]{Helgason_GA}
\begin{align}\label{Planc}
    \int_{\X} f_1(x) \overline{f_2(x)} \,dx= \frac{1}{|W|} \int_{\fa^*} \int_K \widetilde{f_1}(\lambda,k) \overline{\widetilde{f_2}(\lambda,k)}  |c(\lambda)|^{-2} \, dk \, d\lambda,
\end{align}
for all $f_1, f_2 \in C_c^{\infty}(\X)$. We also require the following estimates \cite[(2.2)]{CGM93} (see also \cite[Ch. IV, prop 7.2]{He_00})
\begin{equation}\label{est_c-2,hr}
\begin{aligned}
    |c(\lambda)|^{-2} \leq C |\lambda|^{\nu -l} (1+ |\lambda|)^{n-\nu}, \quad \text{for all } \lambda \in \mathfrak{a}^*.
\end{aligned}
\end{equation}
If $f$ is $K$-biinvariant, meaning  $f(k_1gk_2)=f(g)$ for all  $k_1, k_2\in K$ and $g \in G$, then  $\widetilde{f}$ does not depend on $K$ and the formula \eqref{defn:hft} simplifies to
\begin{equation*}
\widetilde{f}(\lambda, k)=\widehat{f}(\lambda):=\int_G f(g) \phi_{-\lambda}(g) \,d g,
\end{equation*}
for all $\lambda \in \mathfrak{a}^*, k \in K$, where
\begin{align*}
\phi_\lambda(g)=\int_K e^{-(i \lambda+\rho)H(g^{-1} k)} d k, \quad \lambda \in \mathfrak{a}_{\mathbb{C}}^*,
\end{align*} is the Harish-Chandra's elementary spherical function \cite{HC58}.

We now focus on the rank one case and recall some results from \cite{RS_09} in the context of rank one symmetric spaces. In this scenario, it is well known that $\Sigma$ is either of the form $\{ -\alpha , \alpha\}$ or $\{-2\alpha,- \alpha , \alpha, 2\alpha \}$. To simplify notation, we choose $H_0$ as the unique element in $\mathfrak{a}$ such that $\alpha(H_0) = 1$. This allows us to identify the complex plane $\mathbb{C}$ with $\mathfrak{a}_{\mathbb{C}}^*$ using the map $\lambda \mapsto \lambda \cdot \alpha$. This map also identifies $\R$ with $\mathfrak{a}^*$ (see \cite{Ion02} for more details). In their work, Ray and Sarkar \cite{RS_09} established the following Hausdorff-Young inequality in the context of harmonic $NA$ groups for dimensions greater or equal to $3$. By following similar calculations, we note that one can prove the same result for rank one symmetric spaces of noncompact type with $n\geq 3$. Moreover, the result is also valid for the case when $n=2$.
\begin{theorem}[{{\cite[Theorem 4.6]{RS_09}}}]\label{thm_RS_HYinq}
  Let $1\leq p\leq 2$.  Then for $p\leq q\leq p'$, we have
    \begin{align}\label{HY_ineq}
        \left( \int_{\R} \left( \int_K \left| \widetilde {f}(\lambda +i\rho_q, k)  \right|^q \, dk \right)^{\frac{p'}{q}} |c(\lambda)|^{-2} d\lambda \right) ^{\frac{1}{p'}} \leq C_{p,q} \|f\|_{L^p(\X)},
    \end{align}
    where $ \rho_p= (2/p-1)\rho.$
    The case  $p=q=2$ is a weakening of the Plancherel theorem. We also note that
when $q=p' $, the result best resembles the classical Hausdorff-Young inequality at
the lower boundary of the strip $S_p:= \{\lambda \in \C : |\Im \lambda| \leq \rho_p \}$.
\end{theorem}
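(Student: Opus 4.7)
The plan is to establish the theorem through two successive applications of Stein's analytic interpolation, starting from three elementary endpoint estimates. First, Plancherel \eqref{Planc} gives $\|\widetilde f\|_{L^2(\fa^* \times K,\,|c(\lambda)|^{-2} dk\,d\lambda)} = |W|^{1/2}\|f\|_{L^2(\X)}$. Second, on the upper boundary $\Im\lambda = \rho$, combining Fubini with the Poisson-type identity \eqref{F_C(K)=1} yields $\int_K |\widetilde{f}(\lambda + i\rho,k)|\,dk \leq \|f\|_{L^1(\X)}$ uniformly in $\lambda \in \R$. Third, on the lower boundary $\Im\lambda = -\rho$, the exponent $(i\lambda-\rho)H$ in \eqref{defn:hft} reduces to $i\lambda H$, which has modulus one, so $\sup_k |\widetilde{f}(\lambda - i\rho,k)| \leq \|f\|_{L^1(\X)}$.

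In the first interpolation, fix the source as $L^1(\X)$ and, for $f \in C_c^\infty(\X)$, consider the analytic family $U_z f(\lambda,k) := \widetilde{f}(\lambda + i(1-2z)\rho,k)$ on the strip $\{0 \leq \Re z \leq 1\}$; entirety of $z \mapsto \widetilde{f}(\lambda + i(1-2z)\rho, k)$ follows from the Paley--Wiener property for compactly supported $f$. The boundary estimates above realize $U_z:L^1 \to L^\infty_\lambda L^1_k$ at $\Re z = 0$ and $U_z:L^1 \to L^\infty_\lambda L^\infty_k$ at $\Re z = 1$, both uniformly in $\Im z$ because the outer $L^\infty_\lambda$ norm is translation-invariant under the real shift produced by $\Im z$. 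Applying Stein's interpolation in the mixed-norm target yields, for each $q_0 \in [1,\infty]$,
\[
\sup_{\lambda \in \R} \left( \int_K |\widetilde{f}(\lambda + i\rho_{q_0},k)|^{q_0}\,dk \right)^{1/q_0} \leq C_{q_0}\,\|f\|_{L^1(\X)},
\]
after checking the algebraic identity $(1-2\theta)\rho = \rho_{q_\theta}$ with $1/q_\theta = 1-\theta$.

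The second interpolation varies the source between $L^1$ and $L^2$. For each fixed $q_0 \in [1,\infty]$, set $V_z f(\lambda,k) := \widetilde{f}(\lambda + i(1-z)\rho_{q_0},k)$. At $\Re z = 0$ the previous step gives $V_z:L^1 \to L^\infty_\lambda L^{q_0}_k$, while at $\Re z = 1$ the imaginary shift vanishes and Plancherel yields $V_z:L^2 \to L^2_\lambda L^2_k(|c(\lambda)|^{-2})$. Invoking the weighted mixed-norm version of Stein's analytic interpolation developed in Section \ref{sec_SW_analytic} produces, at $\Re z = \theta$,
\[
\left(\int_{\R} \Big( \int_K |\widetilde{f}(\lambda + i\rho_{q_\theta},k)|^{q_\theta}\,dk \Big)^{p_\theta'/q_\theta} |c(\lambda)|^{-2}\,d\lambda \right)^{1/p_\theta'} \leq C\,\|f\|_{L^{p_\theta}(\X)},
\]
where $1/p_\theta = 1 - \theta/2$, $1/q_\theta = (1-\theta)/q_0 + \theta/2$, and $(1-\theta)\rho_{q_0} = \rho_{q_\theta}$ by direct algebra using $\rho_r = (2/r - 1)\rho$. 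For any target pair with $1 \leq p \leq 2$ and $p \leq q \leq p'$, take $\theta = 2/p'$ and solve $1/q_0 = (1/q - 1/p')/(1 - 2/p')$; this $q_0$ lies in $[1,\infty]$ throughout the admissible range, delivering the asserted inequality after a density argument from $C_c^\infty(\X)$ to $L^p(\X)$.

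The main obstacle is the weighted mixed-norm version of Stein's interpolation needed in the second step: the horizontal translation of $\lambda$ induced by $\Im z$ interacts non-trivially with the Plancherel weight $|c(\lambda)|^{-2}$, so one cannot simply invoke the classical Stein--Weiss theorem. Handling this requires careful bookkeeping of the operator norms of $V_{\theta+iy}$ as $|y| \to \infty$ (polynomial growth absorbable by Stein's admissibility hypothesis) and is the technical heart of Section \ref{sec_SW_analytic}. A secondary point, verifying measurability and analyticity of $U_z$ and $V_z$ on $C_c^\infty(\X)$, reduces to the Paley--Wiener estimates for $\widetilde{f}$ and the control of the factor $e^{-2\rho H(g^{-1}k)}$ as $\Im\lambda$ varies inside the strip.
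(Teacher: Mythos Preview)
Your first interpolation is essentially correct and reproduces the $p=1$ case of Theorem~\ref{RS_4.2}. The genuine gap is in the second interpolation, at the $L^2$ endpoint $\Re z=1$. There you need
\[
\int_{\R}\int_K \bigl|\widetilde{f}(\lambda + y\rho_{q_0}, k)\bigr|^2 \,|c(\lambda)|^{-2}\,dk\,d\lambda \;\leq\; A_1(y)^2\,\|f\|_{L^2(\X)}^2
\]
for some admissible $A_1$. After the change of variable $\lambda \mapsto \lambda - y\rho_{q_0}$ and Plancherel, this forces the pointwise bound $|c(\lambda - y\rho_{q_0})|^{-2} \lesssim A_1(y)^2\,|c(\lambda)|^{-2}$. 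But by \eqref{sharp_c-2} one has $|c(\lambda)|^{-2}\asymp \lambda^2(1+|\lambda|)^{n-3}$, which \emph{vanishes} at $\lambda=0$; hence for any $y\neq 0$ the ratio $|c(\lambda - y\rho_{q_0})|^{-2}/|c(\lambda)|^{-2}$ blows up as $\lambda\to 0$. No choice of $A_1(y)$, polynomial or sub-exponential, makes the boundary estimate hold, so Stein's interpolation does not apply. Your remark that the shift produces ``polynomial growth absorbable by Stein's admissibility hypothesis'' is incorrect: the growth is not in $y$ but in $\lambda$, and is not controlled by the target weight.

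The paper does not prove Theorem~\ref{thm_RS_HYinq} directly (it is quoted from \cite{RS_09}), but it does prove the variant Theorem~\ref{thm_H-Y_int} and then recovers Theorem~\ref{thm_RS_HYinq} in Corollary~\ref{cor_Our_HY>}. The device that repairs exactly your gap is to replace the target weight $|c(\lambda)|^{-2}$ by $(1+|\lambda|)^{n-1}$ and to build the factor $(\lambda+z)/(\lambda+z+i2\rho)$ into the analytic family $\mathcal{S}_z$ (see \eqref{defn_Tz_HYP}). After the change of variable at $\Re z=0$, this factor supplies precisely the missing $\lambda^2/|\lambda+i2\rho|^2$ needed to reconstitute $|c(\lambda)|^{-2}$ via \eqref{sharp_c-2}, while the residual translation in $(1+|\lambda|)^{n-1}$ contributes only the genuinely polynomial factor $(1+|\xi|)^{n-1}$, which \emph{is} admissible. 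Without this modification of the operator and the weight, the argument you sketch cannot close.
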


\begin{theorem}[{{\cite[Theorem 4.2]{RS_09}}}]\label{RS_4.2}
    Let $1\leq p<2$. Then for $p<q<p'$ and for $\lambda \in \C$ with $\Im  \lambda  = \rho_q$, we have 
\begin{align*}
    \left( \int_{K} |\widetilde{f}(\lambda, k)|^q dk\right)^{\frac{1}{q}} \leq C_{p,q} \|f\|_{L^p(\X)},
\end{align*}
for all $f \in L^p(\X)$. Moreover, when $p=1$, then $q \in [1,\infty] $ and $C_{p,q}=1$.
\end{theorem}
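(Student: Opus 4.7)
The plan is a two-step argument: the endpoint $p=1$ is handled by a direct Minkowski estimate, and the interior range $1<p<2$, $p<q<p'$ is obtained by Stein's complex interpolation applied to a holomorphic family of operators indexed by the imaginary shift of $\lambda$.

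For the base case $p=1$, on the line $\Im\lambda=\rho_q$ we have $|e^{(i\lambda-\rho)H(g^{-1}k)}|=e^{-(\rho_q+\rho)H(g^{-1}k)}=e^{-(2\rho/q)H(g^{-1}k)}$. Applying Minkowski's integral inequality in $L^q(K)$ and then invoking \eqref{F_C(K)=1} gives
\[
\|\widetilde{f}(\lambda,\cdot)\|_{L^q(K)}
\le \int_G |f(g)|\Bigl(\int_K e^{-2\rho H(g^{-1}k)}\,dk\Bigr)^{1/q}dg
= \|f\|_{L^1(\X)},
\]
which yields the statement with $C_{1,q}=1$ for every $q\in[1,\infty]$.

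For the interior range, I would apply Stein's analytic interpolation to the family $T_z f(k)=e^{z^2}\widetilde{f}(\xi+i\rho(z),k)$ on a complex strip, with $\rho(z)$ an affine parametrization of the imaginary shift chosen so that one boundary of the strip hits the $p=1$ line where Step~1 applies and the other boundary hits the real axis $\Im\lambda=0$ where the Plancherel identity \eqref{Planc} supplies $L^2$ control; the Gaussian prefactor $e^{z^2}$ ensures the growth control in $|\Im z|$ required by Stein's theorem. The principal technical obstacle is that the Plancherel endpoint is intrinsically an integrated-in-$\lambda$ estimate whereas the target is a pointwise bound at a single $\lambda=\xi+i\rho_q$. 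To reconcile the two, one exploits the holomorphy of $\lambda\mapsto\widetilde{f}(\lambda,k)$ on the tube $|\Im\lambda|<\rho_p$ for $f\in L^p(\X)$, and applies a Phragm\'en--Lindel\"of / three-lines argument to transfer the endpoint bounds to the pointwise estimate on the intermediate horizontal line $\Im\lambda=\rho_q$. The precise implementation is the content of Ray--Sarkar~\cite{RS_09}.
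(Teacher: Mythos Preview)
The paper does not give its own proof of this statement: it is quoted verbatim from Ray--Sarkar \cite[Theorem~4.2]{RS_09} and used as a black box, so there is nothing to compare your argument against here.

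Your $p=1$ step is correct and is in fact exactly the computation the authors reproduce later in the paper (see the derivation of \eqref{T_q,1_p} in the proof of Theorem~\ref{thm_paley_new}): Minkowski in $L^q(K)$ followed by \eqref{F_C(K)=1} gives the bound with constant $1$.

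For $1<p<2$ your sketch is honest about the real difficulty but does not close it. The obstacle you name is genuine: the Plancherel identity \eqref{Planc} only controls $\int_{\mathfrak a^*}\|\widetilde f(\lambda,\cdot)\|_{L^2(K)}^2\,|c(\lambda)|^{-2}d\lambda$, not $\|\widetilde f(\lambda,\cdot)\|_{L^2(K)}$ at a single $\lambda$, so a naive Stein interpolation between the $L^1$ endpoint and the $L^2$ Plancherel fails to produce a pointwise-in-$\lambda$ bound. The Phragm\'en--Lindel\"of idea you suggest does not bridge this gap by itself, because you would need an a priori pointwise $L^2(K)$ bound on the real axis to run a three-lines argument for fixed $k$-integrals, and that is precisely what is missing. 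The actual argument in \cite{RS_09} proceeds differently (roughly, it exploits the rank-one structure and the explicit relation between the Helgason transform and the Poisson/Radon picture to obtain the needed endpoint), and you correctly defer to that reference; but as written your proposal for the interior range is a strategy outline rather than a proof.
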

We also require the sharp estimate  \cite[Lemma 4.8]{RS_09} of $|c(\lambda)|^{-2}$, which can be obtained from \cite[Lemma 4.2]{ST78} the explicit expression  of 
 $|c(\lambda)|^{-2}$.  
\begin{lemma}\label{est_c-2}
   Let $\X$ be a rank one symmetric space of noncompact type. Then we have 
    \begin{align}\label{sharp_c-2}
        |c(\lambda)|^{-2} \asymp \lambda^2 (1+|\lambda|)^{n-3}, \quad \text{for all } \lambda \in \R.
    \end{align}
\end{lemma}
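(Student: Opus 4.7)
The plan is to derive the two-sided estimate from the explicit Gindikin--Karpelevich formula for the rank one $c$-function, as used in \cite[Lemma 4.2]{ST78}, together with the classical reflection identity and Stirling's asymptotics for the Gamma function. Specifically, in rank one, with $m_\alpha$ and $m_{2\alpha}$ the (possibly zero) multiplicities of the simple and doubled root, the Harish-Chandra $c$-function admits the representation
\begin{equation*}
c(\lambda) \;=\; c_0 \, \frac{\Gamma(i\lambda)}{\Gamma\!\left(\tfrac{1}{2}\bigl(\tfrac{m_\alpha}{2}+1+i\lambda\bigr)\right)\,\Gamma\!\left(\tfrac{1}{2}\bigl(\tfrac{m_\alpha}{2}+m_{2\alpha}+i\lambda\bigr)\right)}
\end{equation*}
for $\lambda \in \R$, up to a normalizing constant $c_0$ depending only on the root system. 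Thus $|c(\lambda)|^{-2}$ factors as a ratio of moduli of Gamma values on the imaginary axis.

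The first step is to apply the reflection identity $|\Gamma(i\lambda)|^{2} = \pi/(\lambda \sinh(\pi\lambda))$ to rewrite the reciprocal denominator as $\pi^{-1}\lambda\sinh(\pi\lambda)$. The numerator, a product of two $|\Gamma(a_j + i\lambda/2)|^{2}$ factors with $a_1 = \tfrac{1}{2}(\tfrac{m_\alpha}{2}+1)$ and $a_2 = \tfrac{1}{2}(\tfrac{m_\alpha}{2}+m_{2\alpha})$, is then handled by Stirling's formula in the form $|\Gamma(a_j + i\tau)|^{2} \asymp (1+|\tau|)^{2a_j-1} e^{-\pi|\tau|}$ as $|\tau| \to \infty$, uniformly for $a_j$ in a compact set. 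Since the rank one dimension formula gives $n = 1 + m_\alpha + m_{2\alpha}$, a short bookkeeping yields $2a_1 + 2a_2 - 2 = n-2$.

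Combining the two factors in the large-$|\lambda|$ regime, the exponential $e^{\pi|\lambda|}$ contributed by $\sinh(\pi\lambda)$ cancels the two $e^{-\pi|\lambda|/2}$ factors from the Gamma numerator, leaving
\begin{equation*}
|c(\lambda)|^{-2} \;\asymp\; |\lambda|\cdot (1+|\lambda|)^{n-2} \;\asymp\; \lambda^{2}(1+|\lambda|)^{n-3}, \qquad |\lambda| \geq 1.
\end{equation*}
For $|\lambda| \leq 1$, one instead uses $\sinh(\pi\lambda) \sim \pi\lambda$, so that $\pi^{-1}\lambda\sinh(\pi\lambda) \asymp \lambda^{2}$, while the two Gamma factors in the numerator are bounded above and below by positive constants on a compact neighborhood of the origin. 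This gives $|c(\lambda)|^{-2} \asymp \lambda^{2}$ for $|\lambda| \leq 1$, again matching $\lambda^{2}(1+|\lambda|)^{n-3}$ in that range. Patching the two regimes gives the claim uniformly in $\lambda \in \R$.

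I expect the main technical point to be a clean verification that $2a_1 + 2a_2 - 2 = n - 2$ in all admissible rank one cases (in particular when $m_{2\alpha}=0$, i.e.\ when $\Sigma = \{\pm\alpha\}$), and that the constants in Stirling's estimate are uniform in the relevant parameters. Both are routine given the explicit dimension formula $n = l + \sum_{\alpha \in \Sigma^+} m_\alpha$ with $l=1$, but they must be checked to ensure the asymptotic constants behave as claimed; beyond this the argument is a standard Gamma-function estimate.
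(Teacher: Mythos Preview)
Your proposal is correct and follows precisely the route the paper points to: the paper does not supply its own proof but states the estimate ``can be obtained from \cite[Lemma 4.2]{ST78} the explicit expression of $|c(\lambda)|^{-2}$,'' and your argument---Gindikin--Karpelevich formula plus the reflection identity and Stirling asymptotics for the Gamma function, with the dimension bookkeeping $n=1+m_\alpha+m_{2\alpha}$---is exactly that derivation. Your check that $a_2>0$ even when $m_{2\alpha}=0$ (so the numerator Gamma factors stay bounded away from zero near $\lambda=0$) is the only place requiring care, and it goes through since $m_\alpha\geq 1$.
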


   \subsection{Mixed norm spaces}\label{subsec: mix_norm}  Let $\left(X_j, dx_j\right), j=0,1$, be two $\sigma$-finite measure spaces and $(X, dx)$ their product measure space. For an ordered pair $P=\left(p_0, p_1\right) \in[1, \infty] \times[1, \infty]$ and a measurable function $f\left(x_0, x_1\right)$ on $(X, \mu)$,  we define the mixed norm $\left(p_0, p_1 \right)$ of $f$ as
\begin{align}\label{defn_mix_norm}
\|f\|_P=\|f\|_{{\left(p_0, p_1\right)}}=\left(\int_{X_0}\left(\int_{X_1}\left|f\left(x_0, x_1\right)\right|^{p_0} d x_1\right)^{p_1 / p_0} d x_0\right)^{1 / p_1} .
\end{align}
 For two such ordered pairs $P=\left(p_0, p_1\right)$ and $Q=\left(q_0, q_1\right)$, we write $1 / R=(1-\theta) / P+\theta / Q$, $0<\theta<1$ to mean $1 / r_0=(1-\theta) / p_0+\theta / q_0$ and $1 / r_1=(1-\theta) / p_1+\theta / q_1$, where $R=\left(r_0, r_1\right)$. We have the following analytic interpolation for the mixed norm spaces.

Let us consider two product measure spaces $X=X_0 \times X_1$ and $Y=Y_0 \times Y_1$. Let $d x$ and $d y$ denote respectively the (product) measures on $X$ and $Y$. Let $T_z $ be an analytic family of linear operators between $X$ and $Y$ of admissible growth, defined in the strip $\{ z \in \C : 0 \leq  \Re z \leq 1 \}$. We suppose that for all finite linear combinations of characteristic functions of rectangles of finite measures $f$ on $X$:
\begin{align*}
\left\|T_{j+ i \xi}(f)\right\|_{Q_j} \leq A_j(\xi)\|f\|_{P_j} 
\end{align*}
for $P_j, Q_j \in[1, \infty] \times[1, \infty]$ such that $\log \left|A_j(\xi)\right| \leq A e^{a|\xi|}, a<\pi, j=0, 1$. Let $1 / R=(1-\theta) / P_0+\theta / P_1$ and $1 / S=(1-\theta) / Q_0+\theta / Q_1, 0<\theta<1$. Then it follows similarly as in \cite[Page 313, Theorem 1]{MR0126155} (see also \cite[Theorem 1]{Stein_56}) that
\begin{align}\label{eqn_ana_int}
\left\|T_\theta(f)\right\|_S \leq A_{\theta}\|f\|_R.
\end{align}
We refer the reader to \cite{MR0126155} for more details about the mixed norm spaces. However, we would like to mention that the order in which the mixed norms are taken in this article differs from that in \cite{MR0126155}. 

\section{Hausdorff-Young-Paley inequality on the unitary dual}\label{sec_HYP_UD}

Classical results such as the Hausdorff-Young inequality have long illustrated the relationship between the norms of a function and its Fourier transform, establishing a foundational framework for more advanced inequalities. These inequalities have been extensively studied and well-understood in Euclidean contexts, and have even been extended to non-Euclidean settings (see \cite{CMMP19}). However, extending these results to symmetric spaces of noncompact type presents unique challenges and opportunities.

In Euclidean settings, the Hausdorff-Young inequality is proved using the Plancherel theorem and the strong type $(1,\infty)$ of the classical Fourier transform operator, which follows from the boundedness of the kernel $e^{-i \langle \xi, x \rangle}$ used to define the classical Fourier transform. A similar argument can be applied in the $K$-biinvariant setting, as seen in \cite{EK82, EKT87} and \cite[Theorem 2.1]{CGM93}, where the authors utilized the fact that the spherical function is bounded on the Helgason-Johnson strip, particularly on the unitary dual $\mathfrak{a}^*$. However, this analogy breaks down in symmetric spaces because the (Poisson) kernel used to define the Fourier transform $\widetilde{f}$ on $\X$ is not bounded. This necessitates a different approach, one that utilizes restriction inequalities. Ray and Sarkar \cite{RS_09} provided a mixed norm version of the Hausdorff-Young inequality for harmonic $NA$ groups, particularly in rank one symmetric spaces of noncompact type. We now prove the Hausdorff-Young inequality and its dual version (Theorem \ref{thm_HYP_uni}) for general symmetric spaces of noncompact type.

\begin{proof}[\textbf{Proof of Theorem \ref{thm_HY_UD}}]
   We will begin by considering the following measure spaces $(\X,dx)$ and
   $(\mathfrak{a}^* \times K, $ $ |W|^{-1}|c(\lambda)|^{-2}\,d\lambda \, dk )$.  Let us write the Fourier transform as a linear operator $\mathcal{H}$ on $C_c^\infty(\mathbb X)$ by 
   \begin{align*}
       \mathcal{H}(f)(\lambda,k) =  \widetilde{f}(\lambda,k)
   \end{align*}
   for all $\lambda \in \fa^*$, $k\in K$.
    Then for all $\lambda \in \mathfrak{a}^*$, we can write using Minkowski's integral inequality
    \begin{align*}
        \left( \int_K  \left| \widetilde{f}(\lambda,k)\right|^2 \,dk \right)^{\frac{1}{2}}  & = \left(\int_K  \left| \int_G f(x) e^{(i\lambda-\rho) H(x^{-1}k)} \,dx \right|^2 \,dk \right)^{\frac{1}{2}}\\
        & \leq \int_G |f(x)|  \left(\int_K e^{-2\rho H(x^{-1} k)}\, dk\right)^{\frac{1}{2}} dx.
    \end{align*}
    Using \eqref{F_C(K)=1} in the inequality above, we have the following analogue of the restriction estimate 
    \begin{align}\label{f_L2K<L1}
        \left( \int_K  \left| \widetilde{f}(\lambda,k)\right|^2 \,dk \right)^{\frac{1}{2}}  & \leq  \|f\|_{L^1(\X)}
    \end{align}
    for all $\lambda \in \fa^*$. So we can write
    \begin{align}\label{T_1,inf}
        \|\mathcal{H}(f)\|_{(2,\infty)} \leq  \|f\|_{L^1(\X)},
    \end{align}
    where $\|\mathcal{H}(f)\|_{(2,\infty)}$ denotes the mixed norm of $\mathcal{H}(f)$ defined in \eqref{defn_mix_norm}.    Next, by the Plancherel theorem \eqref{Planc}, we have
    \begin{align}\label{T_2,2}
        \| \mathcal{H}(f)\|_{(2,2)} =  \left(\frac{1}{|W|}\int_{\fa^*} \int_K |\widetilde{f}(\lambda,k)|^2 |c(\lambda)|^{-2} \, dk \, d\lambda \right)^{\frac{1}{2}}=  \|f\|_{L^2(\X)}.
    \end{align}
From \eqref{T_1,inf} and \eqref{T_2,2},  and by interpolation of mixed norm spaces, we derive \eqref{HY_ud_int}.

To prove \eqref{eqn_HYP_dual-int}, we will use the duality argument. Let $p \in [2,\infty)$, then for $f\in C_c^{\infty}(\X)$, we can write from \eqref{Planc} 
\begin{align*}
    \|f\|_{L^p(\X)}&=\sup_{\|h\| _{L^{p'}(\X)} \leq 1} \left| \int_{\X} f(x) \overline{h(x)} dx\right|\\ &=  \sup_{\|h\| _{L^{p'}(\X)}\leq 1}  \left|\frac{1}{|W|} \int_{\fa^*} \int_K \widetilde{f}(\lambda,k) \overline{\widetilde{h}(\lambda,k)}  |c(\lambda)|^{-2} \, dk \, d\lambda \right|.
\end{align*}
By successive applications of  H{\"o}lder's inequality, we obtain from above
\begin{align*}
     \|f\|_{L^p(\X)}& \leq \sup_{\|h\| _{L^{p'}(\X)} \leq 1}  \left(  \frac{1}{|W|}\int_{\mathbb{\fa^*}} \left(\int_K|\widetilde{f}(\lambda, k)|^{2} \, dk\right)^{\frac{p'}{2}} |c(\lambda)|^{-2} \,d \lambda \right)^{\frac{1}{p'}} \\
     & \hspace{6cm} \cdot\left( \frac{1}{|W|}\int_{\mathbb{\fa^*}} \left(\int_K|\widetilde{h}(\lambda, k)|^{2} \, dk\right)^{\frac{p}{2}} |c(\lambda)|^{-2} \,d \lambda \right)^{\frac{1}{p}} .
\end{align*}
 We observe that since $ p'\leq 2$, we can utilize \eqref{HY_ud_int} in the inequality above to obtain
 \begin{align}
     \|f\|_{L^p(\X)}& \leq  \left(\sup_{\|h\| _{L^{p'}(\X)} \leq 1}  \|h\|_{L^{p'}(\X)} \right) \left( \frac{1}{|W|} \int_{\mathbb{\fa^*}} \left(\int_K|\widetilde{f}(\lambda, k)|^{2} \, dk\right)^{\frac{p'}{2}} |c(\lambda)|^{-2} \,d \lambda \right)^{\frac{1}{p'}}, 
 \end{align}
 which establishes \eqref{eqn_HYP_dual-int}, concluding our theorem.
\end{proof}

In the theorem above, we demonstrate that the Fourier transform on symmetric spaces is a bounded operator from $L^p(\X)$ to $L^{(2,p')}(\mathfrak{a}^* \times K, |c(\lambda)|^{-2} d\lambda\, dk)$ for $1\leq p\leq 2$. Next, we consider Paley's inequality (Theorem \ref{thm_paley_uni}), which addresses the boundedness of the Fourier transform operator from $L^p(\X)$ to $L^{(2,p)}(\mathfrak{a}^* \times K, d\mu\, dk)$, where $\mu$ is an appropriate measure on $\mathfrak{a}^*$.

\begin{proof}[\textbf{Proof of Theorem \ref{thm_paley_uni}}] For a given positive function $u$ on $\fa^*$, we consider the measure spaces $(\X, dx)$ and $(\fa^* , d\mu)$, where \begin{align}\label{def_mu} 
d\mu(\lambda) : = u(\lambda)^2 |c(\lambda)|^{-2} d\lambda.
\end{align}
We then define the following sublinear operator mapping $C_c^{\infty}(\X)$ to measurable functions on $\fa^*$ by 
\begin{align}\label{def_T_pal}
  \mathcal{T}f(\lambda) = \frac{1}{u(\lambda)}\|\widetilde{f}(\lambda,\cdot)\|_{L^2(K)}.
\end{align}
By taking the $L^2$ norm with respect to the measure $ d\mu(\lambda)$ in \eqref{def_mu}, it follows that
\begin{align*} 
   \|  \mathcal{T}f(\lambda)\|_{2}^2&= \int_{\fa^*} \frac{1}{u(\lambda)^2}\int_K|\widetilde{f}(\lambda, k)|^{2} \, dk\,  u(\lambda)^2  |c(\lambda)|^{-2} \, d\lambda\\
   &=   \int_{\fa^*} \int_K |\widetilde{f}(\lambda,k)|^2 |c(\lambda)|^{-2} \, dk \, d\lambda\\
   &= |W| \|f\|_{L^2(\X)}^2,
\end{align*}
where the last step follows from the Plancherel theorem \eqref{Planc}. Thus, we have shown that the operator $\mathcal{T}$ is bounded from $L^2(\X)$ to $L^2(\fa^*, d\mu(\lambda)) $. Our next objective is to establish that $\mathcal{T}$ is also weak-type $(1,1)$. More precisely, we will prove the following
\begin{align}\label{eqn_w1,1}
   \sup_{\alpha>0} \alpha\, \mu\left\{ \lambda \in \fa^* : \frac{1}{u(\lambda)}\|\widetilde{f}(\lambda,\cdot)\|_{L^2(K)}   >\alpha  \right\}\leq C \|u\|_{c,\infty} {\|f\|_{L^1(\X)}}.
\end{align}
Using the restriction estimate \eqref{f_L2K<L1}, we have
\begin{align*}
    \mu\left\{ \lambda \in \fa^* : \frac{1}{u(\lambda)}\|\widetilde{f}(\lambda,\cdot)\|_{L^2(K)}   >\alpha  \right\} \subset \mu\left\{ \lambda \in \fa^* : \frac{1}{u(\lambda)} \|f\|_{L^1(\X)}   >\alpha  \right\},
\end{align*}
for all $\alpha>0$. This implies that \eqref{eqn_w1,1} follows from the subsequent inequality
\begin{align}\label{sup_f_L1}
    \sup_{\alpha>0} \alpha\,\mu\left\{ \lambda \in \fa^* : \frac{1}{u(\lambda)} \|f\|_{L^1(\X)}   >\alpha  \right\} \leq C \|u\|_{c,\infty} {\|f\|_{L^1(\X)}}.
\end{align}
By taking $\gamma = \alpha/\|f\|_{L^1(\X)}$, we have from the definition of $\mu$ in  \eqref{def_mu} that
\begin{align}\label{dis_func}
    \mu\left\{ \lambda \in \fa^* : \frac{1}{u(\lambda)}   >\gamma  \right\} =   \mu\left\{ \lambda \in \fa^* : {u(\lambda)}   <\frac{1}{\gamma}  \right\}= \int\limits_{\{\lambda\in \fa^*:  u(\lambda)<\frac{1}{\gamma}\}} u(\lambda)^2 |c(\lambda)|^{-2} \,d\lambda.
\end{align}
We can rewrite the integral above as follows 
\begin{align*}
    \int\limits_{\{\lambda\in \fa^*:  u(\lambda)<\frac{1}{\gamma}\}} u(\lambda)^2 |c(\lambda)|^{-2} \,d\lambda=     \int\limits_{\{\lambda\in \fa^*:  u(\lambda)<\frac{1}{\gamma}\}} \left( \int_0^{u(\lambda)^2} d\zeta\right) |c(\lambda)|^{-2} \,d\lambda.
\end{align*}
By interchanging the order of integration, we obtain
\begin{align*}
     \int\limits_{\{\lambda\in \fa^*:   u(\lambda)<\frac{1}{\gamma}\}} \left( \int_0^{u(\lambda)^2} d\zeta\right) |c(\lambda)|^{-2} \,d\lambda=   \int_0^{\frac{1}{\gamma^2}}  \left(\int\limits_{\{\lambda\in \fa^*:  \zeta^{\frac{1}{2}}<u(\lambda)<\frac{1}{\gamma}\}} |c(\lambda)|^{-2} \,d\lambda \right)   d\zeta .
\end{align*}
By the change of variable $\zeta\rightarrow \zeta^{{2}}$ in the integral of the right hand side above yields the following 
\begin{align*}
    \int_0^{\frac{1}{\gamma^2}}  \left(\int\limits_{\{\lambda\in \fa^*:  \zeta^{\frac{1}{2}}<u(\lambda)<\frac{1}{\gamma}\}} |c(\lambda)|^{-2} \,d\lambda \right)   d\zeta &=  2     \int_0^{\frac{1}{\gamma}} \zeta \left(\int\limits_{\{\lambda\in \fa^*:  \zeta<u(\lambda)<\frac{1}{\gamma}\}} |c(\lambda)|^{-2} \,d\lambda \right)   d\zeta \\
    & \leq 2     \int_0^{\frac{1}{\gamma}}  \zeta \left(\int\limits_{\{\lambda\in \fa^*:  u(\lambda)>\zeta\}} |c(\lambda)|^{-2} \,d\lambda \right)   d\zeta.
\end{align*}
Taking the supremum in the inequality above and using \eqref{dis_func}, we derive
\begin{align*}
    \mu\left\{ \lambda \in \fa^* : \frac{1}{u(\lambda)}   >\gamma  \right\}\leq  2  \int_0^{\frac{1}{\gamma}}    \sup_{\zeta>0} \zeta \left(\int\limits_{\{\lambda\in \fa^*:  u(\lambda)>\zeta\}} |c(\lambda)|^{-2} \,d\lambda \right)  d\zeta= \frac{2\|u\|_{c,\infty}}{\gamma}.
\end{align*}
Substituting the value of $\gamma = \alpha/\|f\|_{L^1(\X)}$ in the inequality above, we obtain \eqref{sup_f_L1}, consequently establishing our claim \eqref{eqn_w1,1}. Now utilizing the Marcinkiewicz interpolation theorem, we obtain \eqref{eqn_paley}
\begin{align*}
    \left(\int_{\fa^*} |\mathcal{T}f(\lambda)|^p \, u(\lambda)^2 |c(\lambda)|^{-2} \, d\lambda \right)^{\frac{1}{p}}\leq C_p  \|u\|_{c,\infty}^{\frac{2}{p}-1} \|f\|_{L^p(\X)},
\end{align*}
for $p\in (1,2]$, concluding the proof of our theorem.
\end{proof}
Next, we use the preceding theorems to establish a Hausdorff-Young-Paley inequality by interpolating between the Hausdorff-Young inequality and the Paley inequality. This inequality will be crucial for our subsequent analysis of $L^p \rightarrow L^q$ Fourier multipliers.
\begin{theorem}\label{thm_HYP_uni}
    Let $1<p\leq 2$ and $1<p\leq b \leq p'<\infty $. Assume that $ u: \fa^* \rightarrow (0,\infty)$ is a positive function such that  \eqref{cond_u_HYP} holds.
  Then for all $f \in L^p(\X)$, we have
  \begin{align*}
      \left( \int_{\mathbb{\fa^*}} \left(\int_K|\widetilde{f}(\lambda, k)|^{2} \, dk\right)^{\frac{b}{2}} u(\lambda)^{1-\frac{b}{p'}} |c(\lambda)|^{-2} \,d \lambda \right)^{\frac{1}{b}} \leq C_p  \|u\|_{c,\infty}^{\frac{1}{b}-\frac{1}{p'}}\|f\|_{L^p(\X)}.
  \end{align*}
\end{theorem}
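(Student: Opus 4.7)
The plan is to derive Theorem \ref{thm_HYP_uni} by complex (Stein--Weiss) interpolation between the two endpoint inequalities already at our disposal: the Hausdorff--Young inequality of Theorem \ref{thm_HY_UD} (which will be the endpoint $b=p'$) and the Paley inequality of Theorem \ref{thm_paley_uni} (the endpoint $b=p$). The first step is simply to verify that these really are the endpoints of the claimed inequality. When $b=p'$, the weight $u(\lambda)^{1-b/p'}$ collapses to $1$ and the constant $\|u\|_{c,\infty}^{1/b-1/p'}$ becomes $1$, so the claim reduces to \eqref{HY_ud_int}. When $b=p$, using the identity $\tfrac{2}{p}-1=\tfrac{1}{p}-\tfrac{1}{p'}$ one checks $1-b/p'=2-p$ and $1/b-1/p'=2/p-1$, and the claim becomes precisely \eqref{eqn_paley}.

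Next I would view the Fourier transform $\mathcal H:f\mapsto \widetilde f$ as a linear operator from $L^p(\X)$ into the mixed-norm space $L^s\bigl(\mathfrak a^*,w(\lambda)\,d\lambda;L^2(K,dk)\bigr)$, so that Theorems \ref{thm_HY_UD} and \ref{thm_paley_uni} give the two endpoint bounds
\begin{align*}
\|\mathcal H f\|_{L^{p'}(\mathfrak a^*,|c(\lambda)|^{-2}d\lambda;\,L^2(K))} &\lesssim \|f\|_{L^p(\X)},\\
\|\mathcal H f\|_{L^{p}(\mathfrak a^*,u(\lambda)^{2-p}|c(\lambda)|^{-2}d\lambda;\,L^2(K))} &\lesssim \|u\|_{c,\infty}^{\tfrac{2}{p}-1}\|f\|_{L^p(\X)}.
\end{align*}
I would then apply the Stein--Weiss weighted interpolation (as formulated in Section \ref{subsec: mix_norm} and developed more fully in Section \ref{sec_SW_analytic}), interpolating only in the outer $\mathfrak a^*$-variable while keeping the inner $L^2(K)$ norm fixed, with parameter $\theta\in[0,1]$ chosen so that $\tfrac{1}{b}=\tfrac{1-\theta}{p}+\tfrac{\theta}{p'}$. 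The standard Stein--Weiss rule produces the interpolated weight
\[
w_\theta(\lambda)=\bigl(u(\lambda)^{2-p}|c(\lambda)|^{-2}\bigr)^{b(1-\theta)/p}\bigl(|c(\lambda)|^{-2}\bigr)^{b\theta/p'}
\]
and operator norm bounded by $\|u\|_{c,\infty}^{(2/p-1)(1-\theta)}$.

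The third step is routine exponent arithmetic. The exponent of $|c(\lambda)|^{-2}$ in $w_\theta$ equals $\tfrac{b(1-\theta)}{p}+\tfrac{b\theta}{p'}=1$, while the exponent of $u$ is $\tfrac{(2-p)b(1-\theta)}{p}=(1-\theta)\bigl(\tfrac{1}{p}-\tfrac{1}{p'}\bigr)\,b=\bigl(\tfrac{1}{b}-\tfrac{1}{p'}\bigr)b=1-b/p'$, where the middle identity follows immediately from the definition of $\theta$. The same identity gives $(2/p-1)(1-\theta)=1/b-1/p'$ for the constant, matching the stated bound exactly.

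The only subtlety, and what I would flag as the hard part, is justifying the use of complex interpolation for the vector-valued, weighted mixed-norm targets when the underlying operator is strictly speaking sublinear (since the inner $L^2(K)$ norm is a supremum over duality). This is not a real obstacle: either one linearizes by writing $\|\widetilde f(\lambda,\cdot)\|_{L^2(K)}=\sup_{\|h\|_{L^2(K)}\le 1}\bigl|\int_K \widetilde f(\lambda,k)\overline{h(k)}\,dk\bigr|$, applies Stein--Weiss to the linear operator $f\mapsto \int_K \widetilde f(\cdot,k)\overline{h(k)}\,dk$ for each fixed $h$, and then takes the supremum in $h$; or one invokes directly the mixed-norm analytic interpolation stated in \eqref{eqn_ana_int}, whose conclusion is exactly tailored to the weighted $L^{(2,s)}$ target spaces appearing here. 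Either route yields the desired inequality.
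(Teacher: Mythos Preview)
Your proposal is correct and follows essentially the same route as the paper: interpolate between the Hausdorff--Young endpoint \eqref{HY_ud_int} at $b=p'$ and the Paley endpoint \eqref{eqn_paley} at $b=p$, using Stein--Weiss weighted interpolation in the outer $\mathfrak a^*$-variable with the inner $L^2(K)$ norm held fixed, and then do the exponent arithmetic you carried out. The paper handles your flagged ``subtlety'' by invoking the mixed-norm Stein--Weiss interpolation (Corollary \ref{cor_sw_ana}) directly on the linear operator $\mathcal H(f)(\lambda,k)=\widetilde f(\lambda,k)$ with $Q_0=(2,p)$, $Q_1=(2,p')$, which is exactly the second route you suggested.
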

\begin{proof}
Suppose $1\leq p\leq 2$, then we have from Theorem \ref{thm_HYP_uni} that the linear operator defined by
\begin{align*}
    \mathcal{H}(f)(\lambda,k)=\widetilde{f}(\lambda,k)
\end{align*}
is bounded from $L^{p}(\X)$ to $L^{(2,p')}(\fa^*\times K, |c(\lambda)|^{-2} d\lambda dk$). Morever, assuming the function $u$ satisfies \eqref{cond_u_HYP}, we can utilize Theorem \ref{thm_HYP_uni} and deduce that $\mathcal{H}$ is also bounded from $L^p(\X)$ to  $L^{(2,p)}(\fa^*\times K, u(\lambda)|c(\lambda)|^{-2} d\lambda dk$). Now, we will apply the Stein-Weiss interpolation theorem for mixed norm spaces. More precisely, by employing {(real version of)} Corollary \ref{cor_sw_ana} (see also \cite[page 120]{BL76}) for the operator $\mathcal{H}$ with $Q_0=(2, p)$, $Q_1=(2,p')$, and
      \begin{align*}
      {w}_0(\lambda)= u(\lambda)^{2-p}  |c(\lambda)|^{-2}, \qquad w_1(\lambda)= |c(\lambda)|^{-2},
    \end{align*} 
    we deduce that
    \begin{align}\label{eqn_intp_T_dual}
    \left( \int_{\fa^*} \left( \int_K  \left|\widetilde {f}(\lambda, k) \right|^{2}   \,  dk \right)^{\frac{b}{2}}   w_{\theta}(\lambda) d\lambda \right) ^{\frac{1}{b}}\leq C_{b}  {\|u\|_{c,\infty}^{({\frac{2}{p}-1})(1-\theta)}} \|f\|_{L^p(\X)},
    \end{align}
    where 
    \begin{align*}
        \frac{1}{b}= \frac{1-\theta}{p}+\frac{\theta}{p'}, \qquad \text{and} \qquad  w_{\theta}(\lambda)=  {w}_0(\lambda)^{\frac{b(1-\theta)}{p}} {w}_1(\lambda)^{\frac{b\theta}{p'}}.
    \end{align*}
    Solving the expression above in $\theta\in (0,1)$, we get $\theta= \frac{b-p}{b(2-p)}$, which implies
    \begin{align*}
          \frac{b(1-\theta)}{p}= \frac{b-bp+p}{p(2-p)}= \frac{p'-b}{p'(2-p)}.
    \end{align*}
   Substituting the  expression above in $w_{\theta}$, we obtain
    \begin{align*}
        w_{\theta}(\lambda)= u(\lambda)^{1-\frac{b}{p'}}  |c(\lambda)|^{-2}.
    \end{align*}
    Consequently, we have from \eqref{eqn_intp_T_dual} that
    \begin{align*}
         \left( \int_{\fa^*} \left( \int_K  \left|\widetilde {f}(\lambda, k) \right|^{2}   \,  dk \right)^{\frac{b}{2}}    u(\lambda)^{1-\frac{b}{p'}}  |c(\lambda)|^{-2}\, d\lambda \right) ^{\frac{1}{b}}\leq C_{b}  {\|u\|_{c,\infty}^{\frac{1}{b}-\frac{1}{p'}}} \|f\|_{L^p(\X)}.
    \end{align*}
    This completes the proof of our theorem.
\end{proof}

\section{$L^p \rightarrow L^q$ Fourier  multipliers on Riemannian symmetric spaces}\label{sec_Lp-Lq_multiplier}
In this section, we present our main results on the $L^p \rightarrow L^q$ boundedness of multiplier operators on symmetric spaces of noncompact type. Before delving into these results, let us recall that using the Plancherel theorem, a multiplier operator $T_m$ for a multiplier $m$ can be defined by
\begin{align}\label{defn_mult_op_Tm}
\widetilde{T_m f}(\lambda,k) = m(\lambda) \widetilde{f}(\lambda,k) 
\end{align}
for all $\lambda \in \fa^*$, $k\in K$.
First, we prove the $L^p \rightarrow L^q$ boundedness of the Fourier multiplier operator $T_m$ (Theorem \ref{thm_Lp-Lq_int}). Then, as applications, we will establish a spectral multiplier theorem, obtain the $L^p \rightarrow L^q$ boundedness of functions of the Laplace-Beltrami operator, and present additional results in the context of symmetric spaces of noncompact type.
\begin{proof}[\textbf{Proof of Theorem \ref{thm_Lp-Lq_int}}]
    We begin with the observation that it suffices to prove the result for $p\leq q'$, as the result for the case $p\geq q'$ will follow by duality. Indeed, if $p\geq q' $, we can reduce the $L^p\rightarrow L^q$ boundedness of $T_m$ to the case other case by using the following   $$\| T_m\|_{L^p\rightarrow L^q} = \| T_m^* \|_{L^{p'}\rightarrow L^{q'}} \qquad  \text{ and } \qquad T_m^* =T_{\overline{m}}.$$
So, we assume $p\leq q'$. As $q\geq 2$,  applying \eqref{eqn_HYP_dual-int} we obtain
\begin{align*}
    \|T_m f\|_{L^q(\X)}\leq C_q  \left( \int_{\mathbb{\fa^*}} \left(\int_K|\widetilde{T_m f} (\lambda, k)|^{2} \, dk\right)^{\frac{q'}{2}} |c(\lambda)|^{-2} \,d \lambda \right)^{\frac{1}{q'}} .
\end{align*}
Now, employing Theorem \ref{thm_HYP_uni} with $b=q'$, $u=|m|^{r}$, where $1/r =1/p-1/q$, it follows that
\begin{equation}\label{Tmf_q<p} 
\begin{aligned}
    \|T_m f\|_{L^q(\X)}& \leq C_q \left( \int_{\mathbb{\fa^*}} \left(\int_K|\widetilde{ f} (\lambda, k)|^{2} \, dk\right)^{\frac{q'}{2}} |m(\lambda)|^{q'} |c(\lambda)|^{-2} \,d \lambda \right)^{\frac{1}{q'}}\\
    &\leq C_{p,q} \left(\sup\limits_{\alpha>0}\,\alpha \int\limits_{\{ \lambda \in \fa^* : |m(\lambda)|^r>\alpha \}   }  |c(\lambda)|^{-2} \, d\lambda\right)^{\frac{1}{r}} \|f\|_{L^p(\X)},
\end{aligned}
\end{equation}
where in the last inequality, we utilized the fact $1/q'-1/p'= 1/p-1/q$. Furthermore, by writing
    \begin{align*}
        \left(\sup\limits_{\alpha>0}\,\alpha \int\limits_{\{ \lambda \in \fa^* : |m(\lambda)|^r>\alpha \}   }  |c(\lambda)|^{-2} d\lambda\right)^{\frac{1}{r}} &= \left(\sup\limits_{\alpha>0}\,\alpha \int\limits_{\{ \lambda \in \fa^* : |m(\lambda)|>\alpha^{\frac{1}{r}} \}   }  |c(\lambda)|^{-2} d\lambda\right)^{\frac{1}{r}} \\
        & = \left(\sup\limits_{\alpha>0}\,\alpha^r \int\limits_{\{ \lambda \in \fa^* : |m(\lambda)|>\alpha \}   }  |c(\lambda)|^{-2} d\lambda\right)^{\frac{1}{r}}\\
        &= \sup\limits_{\alpha>0}\,\alpha \left( \int\limits_{\{ \lambda \in \fa^* : |m(\lambda)|>\alpha \}   }  |c(\lambda)|^{-2} d\lambda\right)^{\frac{1}{r}},
    \end{align*}
    we obtain from \eqref{Tmf_q<p}  that
    \begin{align*}
         \|T_m f\|_{L^q(\X)}& \leq C_{p,q} \left( \sup\limits_{\alpha>0}\,\alpha \left( \int\limits_{\{ \lambda \in \fa^* : |m(\lambda)|>\alpha \}   }  |c(\lambda)|^{-2} d\lambda\right)^{\frac{1}{p}-\frac{1}{q}} \right) \|f\|_{L^p(\X)}
    \end{align*}
    for all $f \in L^p(\X)$.
\end{proof}
We will now utilize the connection between the $L^p \rightarrow L^q$ norm of multiplier operators, for $1 < p \leq 2 \leq q < \infty$, and Harish-Chandra's $c$-function to determine the norm estimate of the spectral multiplier operator. Specifically, by using the sharp estimate of $|c(\lambda)|^{-2}$, we will establish the norm estimate of the spectral multiplier operator and prove Theorem \ref{thm_spec_mul_int}.
\begin{proof}[\textbf{Proof of Theorem \ref{thm_spec_mul_int}}] By comparing the definition of $\varphi(-\Delta)$ with $T_m$ \eqref{defn_mult_op_Tm}, we note that $\varphi(-\Delta)$ is a Fourier multiplier operator $T_m$  with
    \begin{align}\label{eqn_m=varphi}
        m(\lambda)= \varphi \left(|\lambda|^2+|\rho|^2\right) \quad \text{for all } \lambda \in \fa^*\setminus \{0\}.
    \end{align}
    We observe that since $s\rightarrow |\varphi(s)|$ is a continuous and monotonic function on the interval  $(|\rho|^2, \infty)$, $\lim_ {s\rightarrow{|\rho|^2}^+} |\varphi(s)|$ always exists and can be finite or infinite. Therefore, by applying Theorem \ref{thm_Lp-Lq_int} and noting that the value of an integral is not affected by a set of measure zero, we derive
    \begin{align}\label{est_varom}
        \|\varphi(-\Delta)\|_{L^p(\X) \rightarrow L^q(\X)} \lesssim  \sup_{\alpha>0}  \alpha \left( \int\limits_{\{  \lambda \in \fa^*\setminus \{0\} : |\varphi\left({|\lambda|^2+|\rho|^2}\right)|> \alpha\}}|c(\lambda)|^{-2} d\lambda \right)^{\frac{1}{p}-\frac{1}{q}}. 
    \end{align}
    By our hypothesis $|\varphi|$ is continuously decreasing over the interval $[|\rho|^2, \infty)$. Consequently, the inner integral in the inequality  \eqref{est_varom} becomes zero when $\alpha \geq |\varphi(|\rho|^2)| $. Therefore, we can say the right-hand side of the inequality above \eqref{est_varom} is the same as 
    \begin{align*}
          \sup_{0<\alpha< |\varphi(|\rho|^2)|}  \alpha \left( \int\limits_{\{  \lambda \in \fa^* \setminus \{0\} : |\varphi\left({|\lambda|^2+|\rho|^2}\right)|> \alpha\}}|c(\lambda)|^{-2} d\lambda 
        \right)^{\frac{1}{p}-\frac{1}{q}}.
    \end{align*}   
   Now, choose any $\alpha \in (0, |\varphi(|\rho|^2)|)$. By the hypothesis on $\varphi$, there exists some $s \in (|\rho|^2, \infty)$ such that $\alpha = |\varphi(s)|$. Utilizing this and the monotonicity of $|\varphi|$, we can write
    \begin{equation}\label{est_of_spec_opn}
        \begin{aligned}
        & \sup_{\alpha>0}  \alpha \left( \int\limits_{{\{  \lambda \in \fa^* \setminus \{0\}  : |\varphi\left({|\lambda|^2+|\rho|^2}\right)|> \alpha\}}} |c(\lambda)|^{-2} d\lambda \right)^{\frac{1}{p}-\frac{1}{q}} \\
        &= \sup_{|\varphi(s)|< |\varphi(|\rho|^2)|}  |\varphi(s)| \left( \int\limits_{\{  \lambda \in \fa^* \setminus \{0\}  : |\varphi\left({|\lambda|^2+|\rho|^2}\right)|>| \varphi(s)|\}}  |c(\lambda)|^{-2} d\lambda \right)^{\frac{1}{p}-\frac{1}{q}}\\
         &= \sup_{s> |\rho|^2}  |\varphi(s)| \left( \int\limits_{\{  \lambda \in \fa^* \setminus \{0\}  :|\lambda| <   \sqrt{s-|\rho|^2}\}} |c(\lambda)|^{-2} d\lambda \right)^{\frac{1}{p}-\frac{1}{q}}.
    \end{aligned}    
    \end{equation}
Using the estimate \eqref{est_c-2,hr} of $|c(\lambda)|^{-2}$ and considering cases where $|\lambda|$ is near and away from zero, we can write from \eqref{est_varom} and the inequality above
    \begin{align*}
        &\|\varphi(-\Delta)\|_{L^p(\X) \rightarrow L^q(\X)}   \\
        &\leq C \max \begin{cases}  \sup\limits_{|\rho|^2 <s \leq 1+ |\rho|^2}  |\varphi(s)| 
              \left( \int_0^{ \sqrt{s-|\rho|^2}}   r^{\nu -l} r^{l-1}  dr \right)^{\frac{1}{p}-\frac{1}{q}},\\
               \sup\limits_{s \geq  1+|\rho|^2}  |\varphi(s)|  \left( \int_0^{1} r^{\nu -l} r^{l-1} dr +\int_1^{\sqrt{s-|\rho|^2}} r^{n-l} r^{l-1} \,dr \right)^{\frac{1}{p}-\frac{1}{q}}.
          \end{cases}  \\
          & \leq   C \max \begin{cases}
             \sup\limits_{ |\rho|^2 <s\leq 1+ |\rho|^2}  |\varphi(s)| \left( s-  |\rho|^2 \right)^{ \frac{\nu}{2}(\frac{1}{p}-\frac{1}{q})},\\
           \sup\limits_{s\geq  1+|\rho|^2}  |\varphi(s)|   \left( s- |\rho|^2\right)^{\frac{n}{2}(\frac{1}{p}-\frac{1}{q})}.
          \end{cases}
    \end{align*}
    This concludes the proof of our theorem.
\end{proof}
\begin{remark}\label{rem_m<est}  
\begin{enumerate}
    \item We observe that if $m_o,m: \mathfrak{a}^* \rightarrow \C$ are two measurable functions such that $|m_o| \leq  |m|$, then the multiplier operator $T_{m_o}$ satisfies the following estimate for $1<p\leq 2\leq q<\infty$
   \begin{align}\label{eqn_T_mo_Tm<_Lp-Lq}
          \|T_{m_o} f\|_{L^q(\X)}& \leq C_{p,q} \left( \sup\limits_{\alpha>0}\,\alpha \left( \int\limits_{\{ \lambda \in \fa^* : |m(\lambda)|>\alpha \}   }  |c(\lambda)|^{-2} d\lambda\right)^{{\frac{1}{p}-\frac{1}{q}}} \right) \|f\|_{L^p(\X)}
     \end{align}
     for all $f\in L^p(\X)$.
     
\item \label{rem_phi<}  It follows from the previous remark that if $\varphi_o,\varphi: (|\rho|^2, \infty) \rightarrow \C$ are two measurable functions such that $|\varphi_o| \leq  |\varphi|$ and $\varphi$ satisfies all the hypotheses of Theorem \ref{thm_spec_mul_int}, then the spectral multiplier operator $\varphi_o(-\Delta)$ satisfies the following estimate for $1<p\leq 2\leq q<\infty$
         \begin{equation}\label{est_spec_op2}
      \begin{aligned}
        \|\varphi_o(-\Delta)\|_{L^p(\X) \rightarrow L^q(\X)}  & \leq C_{p,q}   \max \begin{cases}
             \sup\limits_{ |\rho|^2 <s\leq 1+ |\rho|^2}  |\varphi(s)| \left( s-  |\rho|^2 \right)^{ \frac{\nu}{2}(\frac{1}{p}-\frac{1}{q})},\\
           \sup\limits_{s\geq  1+|\rho|^2}  |\varphi(s)|   \left( s- |\rho|^2\right)^{\frac{n}{2}(\frac{1}{p}-\frac{1}{q})}.
          \end{cases}
   \end{aligned}
   \end{equation}
    \item For hyperbolic spaces $\mathbb{H}^n= \{(x,y): x\in \R^{n-1}, y>0\}$ $(n\geq 2)$, it is known that $|\rho|=(n-1)/2$ and  the pseudo-dimension $\nu$ always equals $3$. Therefore, in the specific case of hyperbolic spaces, we have demonstrated that if $\varphi$ satisfies the hypotheses of Theorem \ref{thm_spec_mul_int}, the corresponding spectral multiplier operator $\varphi(-\Delta)$ satisfies the following estimate for $1< p\leq 2\leq q<\infty$
     \begin{equation*}
      \begin{aligned}
        \|\varphi(-\Delta)\|_{L^p(\mathbb{H}^n) \rightarrow L^q(\mathbb{H}^n)}  & \leq C_{p,q} \max \begin{cases} \sup\limits_{(\frac{n-1}{2})^2 < s\leq 1+ (\frac{n-1}{2})^2}  |\varphi(s)| 
              \left( s-  (\frac{n-1}{2})^2 \right)^{ \frac{3}{2}(\frac{1}{p}-\frac{1}{q})},\\
             \sup\limits_{s\geq 1+ (\frac{n-1}{2})^2}  |\varphi(s)|  \left( s- (\frac{n-1}{2})^2\right)^{\frac{n}{2}(\frac{1}{p}-\frac{1}{q})} .
          \end{cases}
   \end{aligned}
   \end{equation*}
\end{enumerate}
\end{remark}
\subsection{Applications of multiplier theorems}\label{subs_App_Lp_Lq}
In this section, we explore the applications of $L^p \rightarrow L^q$ multipliers to various problems, highlighting their versatility in different contexts. Specifically, we will discuss the role of Theorem \ref{thm_spec_mul_int} in establishing the boundedness of functions of the Laplace-Beltrami operator, deriving Sobolev-type inequalities, and obtaining estimates for heat semigroups.
  \subsubsection{The potentials $(z I- \Delta)^{-\frac{\sigma}{2}}$}\label{subsec_potential} Here we consider the family of operators 
\begin{align}\label{def_T_z,s}
\mathcal{R}_z^{\sigma} = (z I- \Delta)^{-\frac{\sigma}{2}}.
\end{align}
for $z, \sigma \in \C$ with $\Re z \geq -|\rho|^2$ and $\Re \sigma \geq 0$. Anker \cite[Sec. 4]{An92} realized the operator $\mathcal{R}_z^{\sigma}$ by convolution on the right with the $K$-biinvariant tempered distribution $\mathcal{K}_{z,\sigma}$ on $G$, whose Fourier transform is 
\begin{align}\label{eqn_m(z,s)}
m_{z,\sigma} (\lambda)=  (|\lambda|^2+|\rho|^2+z)^{-\frac{\sigma}{2}}, \qquad \lambda \in \mathfrak{a}^{*}.    
\end{align}
Anker \cite[Theorem 4.1]{An92} obtained sharp pointwise estimates of the kernels $\mathcal{K}_{z,\sigma}$, and as a corollary \cite[Cor. 4.2]{An92}, he proved the $L^p \rightarrow L^q$ estimates of the operators $\mathcal{R}_z^{\sigma}$. Here, we establish  the $L^p \rightarrow L^q$ $(1 < p \leq 2 \leq q < \infty)$ boundedness of the operators $\mathcal{R}_z^{\sigma}$ and  prove Corollary \ref{cor_potentials} using Theorem \ref{thm_spec_mul_int}. First, we note that $m_{z,\sigma}$ can be expressed as a spectral multiplier $\varphi_{z,\sigma}$, where 
\begin{align}\label{def_vaphi_z,s}
     \varphi_{z,\sigma}(s) : =  (s+z)^{-\frac{\sigma}{2}}, \qquad s\in [|\rho|^2, \infty).
\end{align}
It follows that for $\Re z \geq -|\rho|^2$ and $\Re \sigma>0$,  the function $s \rightarrow |\varphi_{z,\sigma}(s)|$
\begin{align}\label{eqn_|phi_z,s|}
    |\varphi_{z,\sigma}(s)|= \left((s+\Re z)^2 + (\Im z)^2)\right)^{-\frac{\Re \sigma}{4}}
\end{align} 
 decreases monotonically and continuously on $[|\rho|^2, \infty)$, with $\lim_{s\rightarrow \infty} |\varphi_{z, \sigma}(s)| = 0$. In particular, $\varphi_{z,\sigma}$ satisfies all the hypotheses of Theorem \ref{thm_spec_mul_int} provided $\Re z \geq -|\rho|^2$ and $\Re \sigma>0$. Therefore, applying Theorem \ref{thm_spec_mul_int}, we have
\begin{align}\label{eqn_Tz,s_lpq}
    \|\mathcal{R}_z^{\sigma}\|_{L^p(\X) \rightarrow L^q(\X)} \leq   C_{p,q}   \max \begin{cases}
             \sup\limits_{ |\rho|^2 <s\leq 1+ |\rho|^2}  |\varphi_{z,\sigma}(s)| \left( s-  |\rho|^2 \right)^{ \frac{\nu}{2}(\frac{1}{p}-\frac{1}{q})},\\
           \sup\limits_{s \geq 1+ |\rho|^2}  |\varphi_{z,\sigma}(s)|  \left( s- |\rho|^2\right)^{\frac{n}{2}(\frac{1}{p}-\frac{1}{q})}.
          \end{cases}
\end{align}
Given that $1<p \leq 2 \leq q<\infty$,   from \eqref{eqn_|phi_z,s|}, it is clear that the supremum on the right-hand side of \eqref{eqn_Tz,s_lpq} is finite for $|\rho|^2 <  s\leq 1+|\rho|^2$ whenever $\Re z > -|\rho|^2$. Moreover, if $\Re z = -|\rho|^2$, the same will hold provided $\Im z \not =0 $.  For $s >1+ |\rho|^2$, the supremum will be finite if $\Re \sigma \geq  n \left( {1}/{p} -{1}/{q}\right).$ When $z=-|\rho|^2$, to handle the singularity of the function $ |\varphi_{z,\sigma}(s)|$ at $s=-|\rho|^2$, we additionally need to assume $\Re \sigma \leq  \nu \left( {1}/{p} -{1}/{q}\right)$.    
Thus Theorem \ref{thm_spec_mul_int} and the calculations above lead us to the proof of Corollary \ref{cor_potentials}, which provides sufficient conditions on $z$ and $\sigma$ for which the $L^p\rightarrow L^q$ boundedness will hold for $\mathcal{R}_{z}^\sigma$.

Furthermore, to find an estimate of $  \|\mathcal{R}_\zeta^{\sigma}\|_{L^p(\X) \rightarrow L^q(\X)}$ for $\zeta \in \R$ with $\zeta>-|\rho|^2$, let us define 
\begin{align*}
    \mathcal{I}_{\zeta, \sigma}(s)= \frac{\left( s-  |\rho|^2 \right)^{ \frac{\alpha}{2r}}}{\left(s+\zeta\right)^{\frac{\Re \sigma}{2}}}, \quad s>|\rho|^2,
\end{align*}
where $\alpha>0$  is fixed, $1/r =(1/p-1/q)$, and $\Re \sigma>\alpha/r$. By a simple calculation, it follows that the function $s\rightarrow \mathcal{I}_{\zeta, \sigma}(s)$ attains its maximum at the point $s_0 = (\zeta \alpha + \Re \sigma r |\rho|^2)/(\Re \sigma r-\alpha)$, and 
\begin{align}\label{sup_of_Is}
     \sup\limits_{s>|\rho|^2}\mathcal{I}_{\zeta, \sigma}(s)= \frac{ \alpha^{\frac{\alpha}{2r}} }{(\Re \sigma r)^{\frac{\Re \sigma}{2}}}  \left(\frac{\zeta+|\rho|^2}  { \Re \sigma r -\alpha} \right)^{\frac{\alpha}{2r}-\frac{\Re \sigma}{2}}.
\end{align}
Plugging \eqref{sup_of_Is} into the inequality \eqref{eqn_Tz,s_lpq}, it follows that for  $\Re \sigma>\max\{\nu,n\}/r$ there exists a constant $C=C(p,q,\sigma)>0$ such that
\begin{align}\label{op_est_R_z}
    \|\mathcal{R}_\zeta^{\sigma}\|_{L^p(\X) \rightarrow L^q(\X)} \leq   C \max\left\{(\zeta+|\rho|^2)^{\frac{\nu}{2r}-\frac{\Re \sigma}{2}}, (\zeta+|\rho|^2+1)^{-\frac{\Re \sigma}{2}},(\zeta+|\rho|^2)^{\frac{n}{2r}-\frac{\Re \sigma}{2}} \right\}.
\end{align}
In particular, for rank one symmetric spaces with dimension $n\geq 3$, we obtain the following
\begin{align}\label{est_pot_norm}
    \|\mathcal{R}_\zeta^{\sigma}\|_{L^p(\X) \rightarrow L^q(\X)} \leq   C \begin{cases}
        (\zeta+|\rho|^2)^{\frac{3}{2}(\frac{1}{p}-\frac{1}{q})-\frac{\Re \sigma}{2}}, \quad & \text{if }-|\rho|^2< \zeta \leq 1-|\rho|^2,\\
        (\zeta+|\rho|^2)^{\frac{n}{2}(\frac{1}{p}-\frac{1}{q})-\frac{\Re \sigma}{2}}, \quad &\text{if } \zeta \geq 1-|\rho|^2,
    \end{cases} 
\end{align}
provided $\Re \sigma>n (1/p-1/q)$. We note that following a similar calculation, one can derive an estimate for $ \|\mathcal{R}_{z}^{\sigma}\|_{L^p(\X) \rightarrow L^q(\X)}$, analogous to \eqref{op_est_R_z}, for $z \in \mathbb{C}$ with $\Re(z) > -|\rho|^2$.

Natural examples of multipliers are provided by exponential powers of modified Laplacians on $\X$. In the rank-one case, Ionescu \cite[Theorem 9]{Ion02} obtained sharp $L^p \rightarrow L^p$ estimates for the operator $L_{p,\sigma}$ when $\sigma$ is purely imaginary and $p \in (1,2) \cup (2,\infty)$, where the operator $L_{p,\sigma}$ defined  by 
\begin{align*}
    \widetilde{L_{p,\sigma} f}(\lambda, k) = -(|\lambda|^2 + |\rho_p|^2)^{-\frac{\sigma}{2}} \widetilde{f}(\lambda,k), \qquad \lambda \in \mathfrak{a}^*, k\in K,
\end{align*}
for any smooth compactly supported function $f $ on $\X$. In the following, we prove the $L^p\rightarrow L^q$ boundedness of the operator $L_{p,\sigma}$ within the context of general noncompact type symmetric spaces.
\begin{corollary}
     Let $1<p\leq 2 \leq q< \infty$ and $ q_0   \in [1,2) \cup (2,\infty]$. If $\Re \sigma \geq n (1/p-1/q)$, then the operator $L_{q_0,\sigma}$ is bounded from $L^p(\X)$ to $L^q(\X).$ 
\end{corollary}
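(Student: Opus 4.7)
The plan is to recognize $L_{q_0,\sigma}$ as (up to a sign) the potential operator $\mathcal{R}_z^\sigma$ from \eqref{defn_Rz^s} for a suitable $z \in \mathbb{R}$, and then invoke Corollary \ref{cor_potentials}. Concretely, writing $|\lambda|^2 + |\rho_{q_0}|^2 = |\lambda|^2 + |\rho|^2 + (|\rho_{q_0}|^2 - |\rho|^2)$, we see that the Fourier multiplier of $L_{q_0,\sigma}$ coincides, up to an overall minus sign, with $m_{z,\sigma}(\lambda) = (|\lambda|^2 + |\rho|^2 + z)^{-\sigma/2}$ from \eqref{eqn_m(z,s)} with the choice
\begin{equation*}
z := |\rho_{q_0}|^2 - |\rho|^2 \in \mathbb{R}.
\end{equation*}
Hence $L_{q_0,\sigma} = -\mathcal{R}_z^\sigma$ as Fourier multiplier operators, and their $L^p \to L^q$ norms agree.

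Next, I would verify the hypothesis $\Re z > -|\rho|^2$ of Corollary \ref{cor_potentials}. Since $z$ is real, this is the same as $|\rho_{q_0}|^2 > 0$, i.e.\ $\rho_{q_0} \neq 0$. Recalling $\rho_{q_0} = (2/q_0 - 1)\rho$ and using $\rho \neq 0$ on a noncompact type symmetric space, we have $\rho_{q_0} = 0$ precisely when $q_0 = 2$; this case is excluded by the hypothesis $q_0 \in [1,2) \cup (2,\infty]$, so $\rho_{q_0} \neq 0$ and consequently $\Re z > -|\rho|^2$, strictly.

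With this in hand, I would apply the first statement of Corollary \ref{cor_potentials} with our choice of $z$: since $1 < p \leq 2 \leq q < \infty$, $\Re z > -|\rho|^2$, and $\Re \sigma \geq n(1/p - 1/q)$ by assumption, the corollary gives $\mathcal{R}_z^\sigma : L^p(\X) \to L^q(\X)$ boundedly, whence the identity $L_{q_0,\sigma} = -\mathcal{R}_z^\sigma$ delivers the desired estimate for $L_{q_0,\sigma}$. There is no real obstacle to this argument; the only point requiring care is the verification that the exclusion of $q_0 = 2$ in the statement is exactly what places $z$ strictly inside the region of validity of Corollary \ref{cor_potentials}, keeping us away from the delicate limit case $\Re z = -|\rho|^2$ (where the additional lower bound $\Re \sigma \leq \nu(1/p - 1/q)$ would be needed).
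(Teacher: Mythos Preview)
Your proposal is correct and matches the paper's own argument: the paper simply observes that $L_{q_0,\sigma}= \left(-(|\rho|^2 - |\rho_{q_0}|^2)- \Delta \right)^{-\sigma/2}$ and invokes Corollary \ref{cor_potentials}, which is exactly your identification $L_{q_0,\sigma}=-\mathcal{R}_z^\sigma$ with $z=|\rho_{q_0}|^2-|\rho|^2$. You are in fact slightly more careful than the paper, since you track the harmless overall sign and explicitly verify that $q_0\neq 2$ is precisely the condition ensuring $\Re z>-|\rho|^2$.
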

    The proof follows from Corollary \ref{cor_potentials} by  observing the following   
    \begin{align*}
        L_{q_0,\sigma}= \left(-(|\rho|^2 - |\rho_{q_0}|^2)- \Delta \right)^{-\frac{\sigma}{2}}.
    \end{align*}

\begin{remark}\begin{enumerate}
\item The proof of Corollary \ref{cor_sobolev} follows similarly as in \eqref{eqn_Tz,s_lpq}.
 \item More generally, the following result also holds: Let $1<p\leq 2 \leq q< \infty$, $ q_0   \in [1,2) \cup (2,\infty]$, and $\sigma , \kappa \in \R$. If $\sigma-\kappa \geq n (1/p-1/q)$, then there exists a constant $C>0$ such that  for all $ f\in C_c^{\infty}(\X)$ 
      \begin{align*}
        \left\|  L_{q_0,\kappa}  f\right\|_{L^q(\X)} \leq C   \left\|  L_{q_0,\sigma} f\right\|_{L^p(\X)}.
    \end{align*}
\end{enumerate}
\end{remark}
\subsubsection{Estimates for heat semigroups}
 We now consider the following $\sigma$-fractional heat equation for $\sigma>0$ and $ 1<p\leq 2:$ 
\begin{equation}
\begin{aligned}\label{23vis}
     \frac{\partial}{\partial t}u(t,x) +(-\Delta)^{\sigma}  u(t,x)&=0, \qquad t >0, \\
        u(0,x)&=u_0(x) \in L^p(\X). 
\end{aligned}
\end{equation}
It is known that for $t>0,$ $$u(t, x)=e^{-t (-\Delta)^{\sigma}}u_0(x)$$ is a solution to the initial value problem \eqref{23vis}. Consequently, it represents the spectral multiplier operator  \eqref{def_spec_m} with the symbol
 \begin{align*}
     \hspace{2cm} \varphi\left(s \right)= e^{- t s^{\sigma} },\qquad \text{for all } s \in [|\rho|^2, \infty).
 \end{align*}
Next, to find the $L^p \rightarrow L^q$ estimates for $q\in[2,\infty)$, we will utilize Theorem \ref{thm_spec_mul_int}. We observe that for s fixed $t>0$, the function $s \mapsto e^{-t s^{\sigma}}$  satisfies the hypotheses of Theorem \ref{thm_spec_mul_int}. Thus, applying the same, we obtain 
  \begin{align}\label{est_heat_op}
        \| e^{-t (-\Delta)^{\sigma}} \|_{L^p(\X) \rightarrow L^q(\X)}  & \lesssim  \max\begin{cases} \sup\limits_{| \rho|^2 <s\leq 1+ |\rho|^2}   e^{-t s^{\sigma}}
              \left( s-  |\rho|^2 \right)^{ \frac{\nu}{2}(\frac{1}{p}-\frac{1}{q})},\\
              \sup\limits_{s \geq  1+|\rho|^2}   e^{-t s^{\sigma}} \left( s- |\rho|^2\right)^{\frac{n}{2}(\frac{1}{p}-\frac{1}{q})}  .
          \end{cases}
   \end{align}
First, let us handle the case for $\sigma=1$. By setting $\sigma=1$ and  $\gamma =(s- |\rho|^2 )^{\frac{1}{2}}$, we have from the above 
\begin{align}\label{Lp-q_heat_est}
      \| e^{t \Delta} \|_{L^p(\X) \rightarrow L^q(\X)}  \lesssim  e^{- t|\rho|^{2}}  \max \begin{cases} \sup\limits_{0<\gamma \leq 1}  e^{-t \gamma^{2}}
              \gamma^{\nu(\frac{1}{p}-\frac{1}{q})},\\
             \sup\limits_{\gamma \geq 1}  e^{-t \gamma^{2}} \gamma^{n(\frac{1}{p}-\frac{1}{q})}.
          \end{cases} 
\end{align}
To estimate the supremum of the expression above, let us consider the following functions
\begin{align*}
    \mathcal{I}_{t,\text{glo}}(\gamma):= e^{-t \gamma^{2}} \gamma^{\frac{\nu}{r}},  \quad \text{and} \quad  \mathcal{I}_{t,\text{loc}}(\gamma):= e^{-t \gamma^{2}} \gamma^{\frac{n}{r}} 
\end{align*}
for $ \gamma > 0$, where $1/r=1/p-1/q$. By taking its derivative, we get
\begin{align*}
     \frac{d}{d\gamma}\mathcal{I}_{t,\text{glo}}(\gamma) = e^{-t \gamma^{2}} \left( -  2 t  \gamma^{2}    + \frac{\nu}{r} \right) \gamma^{\frac{\nu}{r}-1}, 
\end{align*}
which has zero only at $\gamma_{\text{glo}}(t) =  ({\nu}/{2  r t})^{\frac{1}{2}}$. Moreover, the function changes sign from positive to negative at $\gamma_{\text{glo}}(t)$. Thus $\gamma_{\text{glo}}(t)$ is a point of maximum of $\mathcal{ I}_{t,\text{glo}}$. Similarly,  $\gamma_{\text{loc}}(t)=({n}/{2 r t})^{\frac{1}{2}} $ is a point of maximum of $\mathcal{ I}_{t,\text{loc}}$ and the function $\gamma \rightarrow \mathcal{ I}_{t,\text{loc}}(\gamma) $ is decreasing on the interval $[\gamma_{\text{loc}}(t),\infty)$.

For large $t$, we have $\gamma_{\text{loc}}(t) \leq 1$ (in particular take $t\geq \nu /2 r$). Utilizing this and the decreasing property of  $\mathcal{ I}_{\text{t,loc}}$ on the interval $[\gamma_{\text{loc}}(t),\infty)$, we get
\begin{align*} 
      \sup\limits_{\gamma \geq 1}  \mathcal{ I}_{\text{t,loc}}(\gamma) \leq \mathcal{ I}_{\text{t,loc}}(1) = e^{-t}. 
\end{align*}
On the other hand, we can write for $t$ near infinity, 
\begin{align*}
 \sup\limits_{0<\gamma \leq 1}  \mathcal{I}_{t,\text{glo}}(\gamma) \leq  \mathcal{I}_{t,\text{glo}}( ({\nu}/{2  r t})^{\frac{1}{2}})=  C t^{-\frac{\nu}{2 } (\frac{1}{p}-\frac{1}{q})}.
\end{align*}
Since for large $t$, $e^{-t} \lesssim t^{-\frac{\nu}{2 } (\frac{1}{p}-\frac{1}{q})},$ so we have derived the following for $t$ near infinity 
\begin{align}\label{est_t>1}
     \max \left\{  \sup\limits_{0<\gamma \leq 1}  e^{-t \gamma^{2}}
              \gamma^{\nu(\frac{1}{p}-\frac{1}{q})}, \, 
             \sup\limits_{\gamma \geq 1}  e^{-t \gamma^{2}} \gamma^{n(\frac{1}{p}-\frac{1}{q})} \right\}\lesssim t^{-\frac{\nu}{2 } (\frac{1}{p}-\frac{1}{q})}.
\end{align}
Now, suppose $t$ is near zero. In this case, we observe
\begin{align*}
     \sup\limits_{0<\gamma \leq 1}  \mathcal{I}_{t,\text{glo}}(\gamma) \leq \mathcal{I}_{t,\text{glo}}(1)=e^{-t}
\end{align*}
and 
\begin{align*}
     \sup\limits_{\gamma \geq 1}  \mathcal{I}_{t,\text{loc}}(\gamma) \leq \mathcal{I}_{t,\text{loc}}(({n}/{2 r t})^{\frac{1}{2}})= C  t^{-\frac{n}{2 }(\frac{1}{p}-\frac{1}{q}) }.
\end{align*}
Hence, for $t$ near zero, we can say the following
\begin{align}\label{est_t<1}
     \max \left\{  \sup\limits_{0<\gamma \leq 1}  e^{-t \gamma^{2}}
              \gamma^{\nu(\frac{1}{p}-\frac{1}{q})}, \, 
             \sup\limits_{\gamma \geq 1}  e^{-t \gamma^{2}} \gamma^{n(\frac{1}{p}-\frac{1}{q})} \right\}\lesssim t^{-\frac{n}{2 } (\frac{1}{p}-\frac{1}{q})}.
\end{align}
In addition,  we note that for  $t$ within a compact interval on $(0,\infty)$, the operator norm $\| e^{t\Delta} \|_{L^p(\X) \rightarrow L^q(\X)}$ can be bounded by a constant independent of $t$. Therefore, substituting the estimates \eqref{est_t>1} and \eqref{est_t<1} in \eqref{Lp-q_heat_est},  the proof of Corollary \ref{cor_est_fr_heat} follows for the heat semigroup. 

Next, we will derive an estimate for the fractional cases. Although the estimate in \eqref{est_heat_op} applies to the $\sigma$-fractional heat semigroup for all $\sigma>0$, due to certain technical challenges, we will need to employ a modified version of Theorem \ref{thm_spec_mul_int}.
\begin{corollary}\label{cor_spec_op_heat}
    Let $\X$ be a general symmetric space of noncompact type, and let $1<p\leq 2 \leq q<\infty$ with $p\not =q$. Assume that $\varphi: (|\rho|^2, \infty)  \rightarrow  \C \setminus \{0\} $ is a function that satisfies all the hypotheses of Theorem \ref{thm_spec_mul_int}. Then the spectral multiplier operator satisfies the following estimates 
\begin{equation}\label{est_spec_op_heat}
      \begin{aligned}
        \|\varphi(-\Delta)\|_{L^p(\X) \rightarrow L^q(\X)}  & \leq C_{p,q}  \sup_{s> |\rho|^2}  \left( \int\limits_{\{  \lambda \in \fa^* \setminus \{0\} :|\lambda| <   \sqrt{s-|\rho|^2}\}}  |\varphi(|\lambda|^2+ |\rho|^2)|^{\frac{pq}{q-p}} |c(\lambda)|^{-2} d\lambda \right)^{\frac{1}{p}-\frac{1}{q}}.
   \end{aligned}
   \end{equation}
\end{corollary}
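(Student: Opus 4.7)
The plan is to deduce the corollary directly from Theorem \ref{thm_Lp-Lq_int} applied to the Fourier multiplier $m(\lambda)=\varphi(|\lambda|^2+|\rho|^2)$, and then to upgrade the weak-type Lorentz bound produced by Theorem \ref{thm_Lp-Lq_int} into the truncated $L^r$ expression appearing in \eqref{est_spec_op_heat}, where $r:=pq/(q-p)$ so that $1/r=1/p-1/q$. Substituting this $m$ into \eqref{eqn_T_m_Lp-Lq} immediately gives
\begin{align*}
\|\varphi(-\Delta)\|_{L^p(\X)\to L^q(\X)}\leq C_{p,q}\sup_{\alpha>0}\alpha\left(\int_{\{\lambda\in\fa^*\setminus\{0\}\,:\,|\varphi(|\lambda|^2+|\rho|^2)|>\alpha\}}|c(\lambda)|^{-2}\,d\lambda\right)^{1/r}.
\end{align*}

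Next I would repeat the reparametrisation carried out in the proof of Theorem \ref{thm_spec_mul_int} (compare \eqref{est_of_spec_opn}). Because $|\varphi|$ is continuous and strictly monotonic on $(|\rho|^2,\infty)$ with limit $0$ at infinity and is nowhere zero (by the standing hypothesis $\varphi\ne 0$), the substitution $\alpha=|\varphi(s)|$ is a bijection between $\alpha\in(0,\lim_{s\to|\rho|^2^+}|\varphi(s)|)$ and $s\in(|\rho|^2,\infty)$, and the super-level set $\{|\varphi(|\lambda|^2+|\rho|^2)|>|\varphi(s)|\}$ coincides with the open ball $\{|\lambda|<\sqrt{s-|\rho|^2}\}$. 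This converts the supremum over $\alpha$ into a supremum over $s$ and yields
\begin{align*}
\|\varphi(-\Delta)\|_{L^p(\X)\to L^q(\X)}\leq C_{p,q}\sup_{s>|\rho|^2}|\varphi(s)|\left(\int_{\{|\lambda|<\sqrt{s-|\rho|^2}\}}|c(\lambda)|^{-2}\,d\lambda\right)^{1/r}.
\end{align*}

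To absorb the factor $|\varphi(s)|$ into the integral I would invoke the monotonicity of $|\varphi|$ once more: whenever $|\lambda|^2+|\rho|^2\leq s$ one has $|\varphi(s)|\leq|\varphi(|\lambda|^2+|\rho|^2)|$. Raising this pointwise inequality to the $r$-th power and integrating against $|c(\lambda)|^{-2}\,d\lambda$ over the ball gives
\begin{align*}
|\varphi(s)|^r\int_{\{|\lambda|<\sqrt{s-|\rho|^2}\}}|c(\lambda)|^{-2}\,d\lambda\leq\int_{\{|\lambda|<\sqrt{s-|\rho|^2}\}}|\varphi(|\lambda|^2+|\rho|^2)|^r|c(\lambda)|^{-2}\,d\lambda.
\end{align*}
Extracting the $r$-th root and taking the supremum over $s>|\rho|^2$ on both sides then chains into \eqref{est_spec_op_heat}, completing the derivation.

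I do not expect a serious obstacle here: the whole argument is essentially a weak-to-strong upgrade of a bound that already appears inside the proof of Theorem \ref{thm_spec_mul_int}. The only care required is bookkeeping the exponent $r=pq/(q-p)$, which is well-defined precisely because of the assumption $p\neq q$, and verifying that the reparametrisation $\alpha\leftrightarrow s$ truly exhausts the supremum (contributions from $\alpha\geq \lim_{s\to|\rho|^2^+}|\varphi(s)|$ are vacuous as the corresponding super-level sets are empty). The hypothesis $\varphi\ne 0$ is what makes the substitution $\alpha=|\varphi(s)|$ cover the whole relevant range of $\alpha$; it is otherwise inessential to the estimate itself.
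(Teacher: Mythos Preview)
Your proposal is correct and follows essentially the same route as the paper's own proof: start from the reparametrised bound \eqref{est_of_spec_opn}, then use the monotonicity of $|\varphi|$ to insert $|\varphi(|\lambda|^2+|\rho|^2)|^r/|\varphi(s)|^r\geq 1$ on the ball $\{|\lambda|<\sqrt{s-|\rho|^2}\}$, which is exactly how the paper absorbs the prefactor $|\varphi(s)|$ into the integrand. The only cosmetic difference is that the paper keeps the integration over the super-level set through the insertion step and only identifies it with the ball at the end, whereas you pass to the ball first; the manipulations are otherwise identical.
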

\begin{proof}
    We can use \eqref{est_of_spec_opn} to get
    \begin{align*}
          &\|\varphi(-\Delta)\|_{L^p(\X) \rightarrow L^q(\X)}  \\ 
          &\lesssim \sup_{|\varphi(s)|< |\varphi(|\rho|^2)|}  |\varphi(s)| \left( \int\limits_{\{  \lambda \in \fa^*  \setminus \{0\}: |\varphi\left({|\lambda|^2+|\rho|^2}\right)|>| \varphi(s)|\}}  |c(\lambda)|^{-2} d\lambda \right)^{\frac{1}{p}-\frac{1}{q}}\\
          &\lesssim \sup_{|\varphi(s)|< |\varphi(|\rho|^2)|}  |\varphi(s)| \left( \int\limits_{\{  \lambda \in \fa^* \setminus \{0\} : |\varphi\left({|\lambda|^2+|\rho|^2}\right)|>| \varphi(s)|\}} \left(\frac{|\varphi(|\lambda|^2+ |\rho|^2)| }{|\varphi(s)|}\right)^{\frac{pq}{q-p}} |c(\lambda)|^{-2} d\lambda \right)^{\frac{1}{p}-\frac{1}{q}}\\
          &\lesssim \sup_{s> |\rho|^2}    \left( \int\limits_{\{  \lambda \in \fa^* \setminus \{0\}:|\lambda| <   \sqrt{s-|\rho|^2}\}}  |\varphi(|\lambda|^2+ |\rho|^2)|^{\frac{pq}{q-p}}|c(\lambda)|^{-2} d\lambda \right)^{\frac{1}{p}-\frac{1}{q}},
    \end{align*}
    completing the proof.
\end{proof}
We recall that $\varphi(s) = e^{-t s^{\sigma}}$ (where $\sigma > 0$) satisfies all the assumptions of Theorem \ref{thm_spec_mul_int}. Therefore, we can apply Corollary \ref{cor_spec_op_heat}. To proceed, let us first assume that $|\rho|^2 < s \leq 1 + |\rho|^2$. In this range of $s$, by setting $1/r = 1/p - 1/q$, that is $r=pq/(q-p)$ and using the estimate \eqref{est_c-2,hr} for $|c(\lambda)|^{-2}$, we obtain the following for all $t \in (0, \infty)$
\begin{align*}
     \int\limits_{\{  \lambda \in \fa^* \setminus \{0\} :|\lambda| <   \sqrt{s-|\rho|^2}\}} |\varphi(|\lambda|^2+ |\rho|^2)|^{\frac{pq}{q-p}} |c(\lambda)|^{-2} d\lambda \lesssim  \int\limits_{0}^{1}  e^{-tr(\gamma^2+|\rho|^2)^{\sigma} } \gamma ^{\nu -1} d\gamma.
\end{align*}
When $s\geq 1+|\rho|^2$, we have for all $t\in (0,\infty)$
\begin{align*}
     &\int\limits_{\{  \lambda \in \fa^* \setminus \{0\} :|\lambda| <   \sqrt{s-|\rho|^2}\}} |\varphi(|\lambda|^2+ |\rho|^2)|^{\frac{pq}{q-p}} ||c(\lambda)|^{-2} d\lambda \\ 
     &\hspace{4cm} \lesssim  \int\limits_{0}^{1}  e^{-tr(\gamma^2+|\rho|^2)^{\sigma} } \gamma ^{\nu -1} d\gamma + \int\limits_{1}^{\infty}  e^{-tr(\gamma^2+|\rho|^2)^{\sigma} } \gamma ^{n -1} d\gamma.
\end{align*}
Now using \cite[Lemma 2]{CGM95},
\begin{align*}
    \int\limits_{0}^{1}  e^{-tr(\gamma^2+|\rho|^2)^{\sigma} } \gamma ^{\nu -1} d\gamma \asymp \begin{cases}
        1  &\quad \text{for all } t\in(0,1],\\
        t^{-\frac{\nu}{2} } e^{-r |\rho|^{2\sigma}} & \quad\text{for all } t\in [1,\infty),
    \end{cases}
\end{align*}
and 
\begin{align*}
    \int\limits_{1}^{\infty}  e^{-tr(\gamma^2+|\rho|^2)^{\sigma} } \gamma ^{n -1} d\gamma \asymp \begin{cases}
        t^{-\frac{n}{2\sigma}}  &\quad \text{for all } t\in(0,1],\\
        t^{-1 } e^{-r (1+|\rho|^2)^{\sigma}} & \quad\text{for all } t\in [1,\infty).
    \end{cases}
\end{align*}
Finally, by considering the supremum on the right-hand side of \eqref{est_spec_op_heat} for $s$ near and away from $|\rho|^2$ (as in \eqref{est_heat_op}) and utilizing the inequalities above, we conclude the proof of Corollary \ref{cor_est_fr_heat}.

\section{Hausdorff-Young-Paley inequality on non-unitary duals}\label{sec_HYP_ND}

In Theorem \ref{thm_HYP_uni}, we proved the Hausdorff-Young-Paley inequality for general symmetric spaces, considering the Fourier transform $\widetilde{f}$ as a function on $\mathfrak{a}^* \times K$. Given that for an $L^p(\X) $ $(1\leq p<2)$ function $f$, $\widetilde{f}$ is holomorphic on a tube containing $\mathfrak{a}^*$, it is of independent interest to determine whether an analogue of the Hausdorff-Young-Paley inequality holds on the entire tube. In this section, we will prove a version of the Hausdorff-Young-Paley inequality on the entire domain where the Fourier transform is well defined in the context of rank one symmetric spaces $\X$ of noncompact type.

First, we establish an analogue of the Paley-type inequality (Theorem \ref{thm_paley_int}) on non-unitary duals, which can be viewed as a weighted version of the Plancherel theorem for $L^p(\X)$ with $1 \leq p < 2$. Unlike in Section \ref{sec_HYP_UD}, we will apply analytic interpolation for mixed norm spaces and rely on both upper and lower estimates of $|c(\lambda)|^{-2}$, as provided in \eqref{sharp_c-2} for the rank one setting.  Additionally, we will present a generalized version of Theorem \ref{thm_paley_int}, which we will utilize for analytically interpolating with the Hausdorff-Young inequality.  To simply the notation, we recall that in the rank one case, $\mathfrak{a}^*$ can be identified with $\mathbb{R}$, and $\mathfrak{a}_{\C}^*$ with the complex plane $\mathbb{C}$. We will prove the following Paley-type inequality.
\begin{theorem}\label{thm_paley_new} Let $\X$ be a rank one symmetric space of noncompact type with dimension $n\geq 1$. Assume $u>0$ be a measurable function on  $\R$ such that  $$\|u\|_{L^1(|c(\lambda)|^{-2})} =  \int_{\R}  u(\lambda)  |c(\lambda)|^{-2}  d\lambda <\infty.$$
 Then for $1\leq p\leq 2$, $p\leq q\leq  p'$, and $\zeta \in \R$, we have 
     \begin{multline} \label{paley's}
        \left( \int_{\R} \left( \int_K  \left|\widetilde {f}(\lambda+\zeta +i\rho_{q} , k) \right|^{q}  \frac{ |\lambda+\zeta+i\rho_{q}|^{q}}{|\lambda+\zeta+i\rho_{q} +i 2\rho|^{q}}  \,  dk \right)^{\frac{p}{q}}  u(\lambda)^{2-p} |c(\lambda)|^{-2}\,d\lambda \right) ^{\frac{1}{p}} \\ \leq C_{p,q} (1+|\zeta|)^{\frac{n-1}{2}} \|u\|_{L^1(|c(\lambda)|^{-2})}^{{\frac{2}{p}-1}}\|f\|_{L^p(\X)},
    \end{multline}
    for all $f\in C_c^{\infty}(\X)$, where $ C_{p,q}$ does not depend on $\zeta$.
\end{theorem}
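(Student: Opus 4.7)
The proof extends the strategy of Theorem \ref{thm_paley_uni} to the rank one setting, tracking the shift parameter $\zeta$ and handling the complex argument $\lambda+\zeta+i\rho_q$. For fixed $q \in [1, \infty]$ I will introduce the sublinear operator
\begin{equation*}
\mathcal{T}^{(q)}_\zeta f(\lambda) := \frac{W_\zeta(\lambda)}{u(\lambda)} \left(\int_K \left|\widetilde{f}(\lambda+\zeta+i\rho_q, k)\right|^q dk\right)^{1/q},
\end{equation*}
where $W_\zeta(\lambda) := |\lambda+\zeta+i\rho_q|/|\lambda+\zeta+i\rho_q+2i\rho| \leq 1$. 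Setting $d\mu := u(\lambda)^2 |c(\lambda)|^{-2} d\lambda$, the target inequality \eqref{paley's} is equivalent to the single bound
\begin{equation*}
\|\mathcal{T}^{(q)}_\zeta f\|_{L^p(d\mu)} \leq C_{p,q}(1+|\zeta|)^{(n-1)/2}\, \|u\|_{L^1(|c(\lambda)|^{-2})}^{2/p-1}\, \|f\|_{L^p(\X)}.
\end{equation*}
The plan is to prove two endpoint estimates and then interpolate.

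For the \emph{weak $L^1$ endpoint} (valid for every $q$), Theorem \ref{RS_4.2} at $p=1$ gives the uniform restriction bound $\|\widetilde{f}(\lambda+\zeta+i\rho_q,\cdot)\|_{L^q(K)} \leq \|f\|_{L^1(\X)}$ with constant one, so $\mathcal{T}^{(q)}_\zeta f(\lambda) \leq \|f\|_{L^1(\X)}/u(\lambda)$. Repeating the distribution-function computation \eqref{dis_func}--\eqref{sup_f_L1} from the proof of Theorem \ref{thm_paley_uni} verbatim, we get
\begin{equation*}
\sup_{\alpha>0} \alpha\, \mu(\{\lambda \in \R : \mathcal{T}^{(q)}_\zeta f(\lambda) > \alpha\}) \leq 2\, \|u\|_{L^1(|c(\lambda)|^{-2})}\, \|f\|_{L^1(\X)},
\end{equation*}
uniformly in $\zeta$. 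For the \emph{strong $L^2$ endpoint} (at $q=2$, so $\rho_q=0$), changing variables $\mu = \lambda + \zeta$ and invoking the Plancherel identity \eqref{Planc} gives
\begin{equation*}
\|\mathcal{T}^{(2)}_\zeta f\|_{L^2(d\mu)}^2 = \int_\R \int_K |\widetilde{f}(\mu, k)|^2 \,\frac{\mu^2}{\mu^2+4\rho^2}\, |c(\mu-\zeta)|^{-2}\, dk\, d\mu.
\end{equation*}
Using the sharp asymptotic $|c(\lambda)|^{-2} \asymp \lambda^2 (1+|\lambda|)^{n-3}$ from Lemma \ref{est_c-2}, a case analysis on the relative sizes of $|\mu|$, $|\zeta|$, and $|\mu-\zeta|$ verifies the pointwise comparison
\begin{equation*}
\frac{\mu^2}{\mu^2+4\rho^2}\, |c(\mu-\zeta)|^{-2} \leq C(1+|\zeta|)^{n-1}\, |c(\mu)|^{-2}, \qquad \mu \in \R,
\end{equation*}
whence $\|\mathcal{T}^{(2)}_\zeta f\|_{L^2(d\mu)}^2 \leq C(1+|\zeta|)^{n-1}\|f\|_{L^2(\X)}^2$ by Plancherel again.

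Marcinkiewicz interpolation between these two endpoints yields the theorem for $q = 2$ and every $p \in [1,2]$. For $q \neq 2$, I will invoke the Stein-Weiss analytic interpolation machinery developed in Section \ref{sec_SW_analytic}, applied to the analytic family $T_z f(\lambda,k) = e^{z^2}\, \widetilde{f}(\lambda + \zeta + iz\rho_q, k)$ on the strip $0 \leq \Re z \leq 1$, whose imaginary shift takes values in the domain of holomorphy of $\widetilde{f}$ because $q \geq p$ forces $|\rho_q| \leq \rho_p$; at $\Re z = 0$ one uses the $q = 2$ Paley estimate just proved, at $\Re z = 1$ one uses the Ray-Sarkar restriction Theorem \ref{RS_4.2}, and tracking the $\zeta$-dependence through the interpolation delivers the stated exponent $(1+|\zeta|)^{(n-1)/2}$. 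The main obstacle will be the pointwise $c$-function comparison at the strong endpoint: because $|c(\lambda)|^{-2}$ vanishes at $\lambda = 0$ and grows polynomially at infinity, the estimate must exploit the cancellation of the factor $\mu^2/(\mu^2+4\rho^2)$ precisely in the critical regime $\mu \sim \zeta$ where $(\mu-\zeta)^2$ is small, and must deliver the correct exponent $(1+|\zeta|)^{n-1}$ uniformly for all dimensions $n \geq 1$, with the regimes $n \geq 3$, $n = 2$, and $n = 1$ each demanding a slightly different application of the triangle and Peetre-type inequalities.
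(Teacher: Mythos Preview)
Your treatment of the case $q=2$ is correct and in fact cleaner than the paper's: using the restriction bound from Theorem~\ref{RS_4.2} at $p=1$ to get a weak $(1,1)$ estimate for the sublinear operator $\mathcal{T}^{(2)}_\zeta$, together with the $(2,2)$ bound from Plancherel and the pointwise $c$-function comparison, Marcinkiewicz delivers the inequality for all $p\in(1,2]$ at $q=2$. The paper instead uses \emph{strong} $(1,1)$ and $(\infty,1)$ endpoints on the full strip $|\Im z|\le\rho$ and Stein's analytic interpolation; your route is more elementary here and even gives a slightly better $\zeta$-exponent $(1+|\zeta|)^{(n-1)(1-1/p)}$.

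However, your plan for $q\neq 2$ does not close. With the family $T_z f(\lambda,k)=e^{z^2}\widetilde f(\lambda+\zeta+iz\rho_q,k)$ on $0\le\Re z\le1$, the target statement sits at $\theta=1$ in the \emph{inner} $K$-exponent (to reach $q$) but at $\theta=0$ in the \emph{outer} $\lambda$-exponent (to stay at $p$). At $\Re z=0$ your $q=2$ Paley estimate is a $(2,p)$ mixed-norm bound from $L^p$; at $\Re z=1$, Theorem~\ref{RS_4.2} yields either a $(q,\infty)$ bound from $L^p$ (pointwise in $\lambda$) or, after integrating against $u|c|^{-2}$, a $(q,1)$ bound from $L^1$. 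Neither pair interpolates to $(q,p)$ from $L^p$ at any interior $\theta$: forcing the inner exponent to $q$ requires $\theta=1$, which sends the outer exponent to $\infty$ or $1$ and the input space away from $L^p$. The difficulty is structural, not a matter of bookkeeping.

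The paper resolves this with a genuinely two-stage argument. First, it runs analytic interpolation on the \emph{full} strip $|\Im z|\le\rho$ for the linear family $\mathcal{S}_{z,\zeta}(f)=\widetilde f(\lambda+z+\zeta,k)\,(\lambda+z+\zeta)/\bigl(u(\lambda)(\lambda+z+\zeta+2i\rho)\bigr)$, establishing \emph{strong} $(1,1)$ and $(\infty,1)$ endpoints at $\Im z=\pm\rho$ (this is where the hypothesis $\|u\|_{L^1(|c(\lambda)|^{-2})}<\infty$, rather than merely $\|u\|_{c,\infty}<\infty$, is essential) and a $(2,2)$ endpoint at $\Im z=0$. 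This yields the theorem for the two extreme values $q=p$ and $q=p'$. Second, a Riesz--Thorin interpolation for mixed norms between these and auxiliary $(q,1)$, $(q',1)$ bounds (obtained directly from the $L^1$ restriction estimate) fills in the intermediate $q\in(p,p')$. Your Marcinkiewicz step can replace neither stage, because analytic interpolation in the imaginary direction is what moves the $K$-exponent off $2$.
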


\begin{proof}  We will utilize the theory of analytic interpolation to establish the theorem above. To start with, consider the measure space $(\X, dx)$ and the product measure space 
\begin{align}\label{defn_Y_nu_1}
(Y, d{\mu_1}):= (\R \times K,   u(\lambda)^{2} \lambda^2 (1+|\lambda|)^{n-3}d\lambda dk).
\end{align}
Let us also define an analytic family of linear operators for compactly supported smooth functions $f$ in $\X$ as follows
    \begin{align}\label{def_psi_z,c}
         \mathcal{S}_{z,\zeta} (f)(\lambda,k) = \frac{\widetilde{f} (\lambda+z+\zeta, k) (\lambda+z+\zeta)}{u(\lambda) (\lambda+z+ \zeta+i 2\rho)} , \quad \text{for all } |\Im z| \leq \rho,
    \end{align}
    where $\zeta $ is an arbitrary but fixed real number. For every $z\in \C$ with $ |\Im z| \leq \rho$, we have  $$|z+i2\rho|\geq |\Im z+ 2\rho| \geq \rho.$$ Consequently, $\lambda +z+i 2\rho$ will never be zero for any $\lambda \in \R$. Therefore, it follows that $\mathcal{S}_{z,\zeta}$ forms an analytic family of linear operators defined on the strip $0 < |\Im z| < \rho$.

Now, let $z =\xi +i\rho$, then we can express
    \begin{align*}
        \| \mathcal{S}_{\xi +i\rho,\zeta} (f)\|_{(1,1)} &= \int_{\R} \int_{K}  \left| \widetilde{f} (\lambda+\xi+\zeta+ i\rho, k) \right|  \frac {|\lambda+\xi+\zeta+i\rho|} {|\lambda+\xi+\zeta+i3\rho|}   u(\lambda) \lambda^{2} (1+|\lambda|)^{n-3}\,  d\lambda\, dk.
    \end{align*}
Here, we apply the restriction theorem for symmetric spaces. More precisely, by applying Fubini's theorem followed by Theorem \ref{RS_4.2}, we obtain
    \begin{align*}
         \|   \mathcal{S}_{\xi +i\rho,\zeta} (f) \|_{(1,1)} &= \int_{\R}  \left( \int_{K}   | \widetilde{f} (\lambda+\xi+\zeta+i\rho, k) | \, dk \right)  \frac {|\lambda+\xi+\zeta+i\rho|} {|\lambda+\xi+\zeta+i3\rho|}  u(\lambda) \lambda^{2} (1+|\lambda|)^{n-3} \,d\lambda\\
         &  \leq C \|f\|_{L^1(\X)}  \int_{\R}   u(\lambda) \lambda^2 (1+|\lambda|)^{n-3}\, d\lambda,
    \end{align*}
    where in the last step, we used the fact that for $\lambda \in \R$,
    ${|\lambda+i\rho|} /{|\lambda+i3\rho|} \leq 1.$
        Using the sharp estimate \eqref{sharp_c-2} of $|c(\lambda)|^{-2}$, we get 
    \begin{align}\label{Tz_1,1}
         \|\mathcal{S}_{\xi +i\rho,\zeta} (f) \|_{(1,1)} & \leq C \|f\|_{L^1(\X)}  \|u\|_{L^1(|c(\lambda)|^{-2})}.
    \end{align}
    For $z = \xi-i \rho$, $\xi\in \R$, we will prove the following 
      \begin{align}\label{Tz_inf,1}
         \|\mathcal{S}_{\xi -i\rho,\zeta} (f) \|_{(\infty,1)} & \leq C \|f\|_{L^1(\X)}  \|u\|_{L^1(|c(\lambda)|^{-2})}.
    \end{align}
    In fact, for $z = \xi-i \rho$, we can write for all $k\in K$
    \begin{align*}
       |\mathcal{S}_{\xi-i \rho, \zeta} (f)(\lambda, k)|& \leq   C \left| \int_{G} f(g) e^{\left(i(\lambda+\xi-i\rho)- \rho \right)H(g^{-1}k)} dg\right| \left|\frac{\lambda+\xi+\zeta-i \rho}{\lambda+\xi+\zeta+i \rho} \right|\\
    & \leq  \int_{G} |f(g)| \, dg \left|\frac{\lambda+\xi+\zeta-i \rho}{\lambda+\xi+\zeta+i \rho} \right|.
    \end{align*}
    Now, integrating both sides with respect to the measure $  u(\lambda) \lambda^2 (1+|\lambda|)^{n-3} d\lambda$ and utilizing the fact that for $\lambda \in \R$, $|\lambda-i \rho|/|\lambda+i \rho|\leq  1$, we get
    \begin{align*}
         \|\mathcal{S}_{\xi -i\rho,\zeta} (f) \|_{(\infty,1)}\leq C \|f\|_{L^1(\X)}  \int_{\R}   u(\lambda) \lambda^2 (1+|\lambda|)^{n-3}\, d\lambda =C \|f\|_{L^1(\X)}\|u\|_{L^1(|c(\lambda)|^{-2})}.
    \end{align*}
      Next, for $z =\xi \in \R$, we have from the definition  \eqref{def_psi_z,c} that
    \begin{align*}
        \|\mathcal{S}_{\xi,\zeta} f\|^2_{(2,2)} &= \int_{\R} \int_{K} \left| \widetilde{f} (\lambda+\xi+\zeta, k) \right|^2  \frac {|\lambda+\xi+\zeta|^2} {u(\lambda)^2|\lambda+\xi+\zeta+i2\rho|^2}{u(\lambda)^2 } \lambda^2 (1+|\lambda|)^{n-3}\,  d\lambda \,dk \\
        &\leq  \int_{K}  \int_{\R}  \left| \widetilde{f} (\lambda+\xi+\zeta, k) \right|^2  \frac {|\lambda+\xi+\zeta|^2} {|\lambda+\xi+\zeta+i2\rho|^2}  (1+|\lambda|)^{n-1}  \,  d\lambda  \,dk.
    \end{align*}
   Changing the variable to $\lambda \rightarrow \lambda-\xi-\zeta$, the inequality above  transforms into
    \begin{align*}
        \|\mathcal{S}_{\xi,\zeta} f\|^2_{(2,2)} &\leq  \int_{\R} \int_{K}  \left| \widetilde{f} (\lambda, k) \right|^2  \frac {\lambda^2} {|\lambda+i2\rho|^2}  (1+|\lambda-\xi-\zeta|)^{n-1}\,  d\lambda \,dk\\
        & \leq  \int_{\R} \int_{K}  \left| \widetilde{f} (\lambda, k) \right|^2  \frac {\lambda^2} {|\lambda+i2\rho|^2}  (1+|\lambda|+|\xi|+|\zeta|)^{n-1}\,  d\lambda \,dk.        
    \end{align*}
 As $\rho$ is always greater or equal to $1/2$, then utilizing the  following for $\lambda \in \R$
\begin{align*}
|\lambda+i2\rho|^2\asymp (1+|\lambda|)^2 \qquad \text{and} \qquad (1+|\lambda|+|\xi|+|\zeta|) \leq (1+|\lambda|) (1+|\xi|)(1+|\zeta|),
\end{align*}
 we can write
    \begin{align*}
      \|\mathcal{S}_{\xi,\zeta} f\|^2_{(2,2)} &\leq C (1+|\xi|)^{n-1}(1+|\zeta|)^{n-1} \int_{\R} \int_{K}  \left| \widetilde{f} (\lambda, k) \right|^2 \lambda^2 (1+|\lambda|)^{n-3} \, d\lambda\, dk. 
    \end{align*}
So, from the estimate \eqref{sharp_c-2} and Plancherel theorem \eqref{Planc}, we have 
\begin{align}\label{Tz_2,2_P}
     \|\mathcal{S}_{\xi,\zeta} f\|_{(2,2)} &\leq C (1+|\xi|)^{\frac{n-1}{2}}(1+|\zeta|)^{\frac{n-1}{2}}  \|f\|_{L^2(\X)}.
\end{align}
By employing analytic interpolation for mixed norm spaces (see \eqref{eqn_ana_int}), we first derive from \eqref{Tz_1,1} and \eqref{Tz_2,2_P} for any $1\leq p\leq 2$, 
\begin{align*}
     \| \mathcal{S}_{i\rho_p, \zeta}(f)\|_{(p,p)} \leq C_p (1+|\zeta|)^{\frac{n-1}{2}} \|u\|_{L^1(|c(\lambda)|^{-2})}^{\frac{2}{p}-1} \|f\|_{L^p(\X)},
\end{align*}
which, by substituting the definition of mixed norm space, transforms into 
\begin{multline}\label{Psi_p,p}
    \left(\int_{\mathbb{R}} \int_K \left|\widetilde{f}\left(\lambda+\zeta+i \rho_p, k\right) \right|^p \frac{|\lambda+\zeta+ i \rho_p|^{p}}{|\lambda+\zeta+i\rho_p+i2\rho|^p} \, u(\lambda)^{2-p} \, |c(\lambda)|^{-2}  d k \, d \lambda\right)^{\frac{1}{p}}  \\  \leq C_p (1+|\zeta|)^{\frac{n-1}{2}} \|u\|_{L^1(|c(\lambda)|^{-2})}^{\frac{2}{p}-1} \|f\|_{L^p(\X)}.
\end{multline}
Similarly, by analytically interpolating again, this time with  \eqref{Tz_inf,1} and \eqref{Tz_2,2_P}, we obtain
\begin{align*}
     \| \mathcal{S}_{i\rho_{p'}, \zeta}(f)\|_{(p',p)} \leq C_p (1+|\zeta|)^{\frac{n-1}{2}} \|u\|_{L^1(|c(\lambda)|^{-2})}^{\frac{2}{p}-1} \|f\|_{L^p(\X)}.
\end{align*}
Substituting the definition of mixed norm space, we can rewrite the inequality above as:
\begin{multline}\label{Psi_p',p}
    \left(\int_{\mathbb{R}}\left( \int_K\left|\widetilde{f}\left(\lambda+\zeta+i \rho_{p'}, k\right)\right|^{p'} \frac{|\lambda+\zeta+i \rho_{p'}|^{p'}}{|\lambda+\zeta+i\rho_{p'}+i2\rho|^{p'}}  \, d k \right)^{\frac{p}{p'}}\, u(\lambda)^{2-p} |c(\lambda)|^{-2} \, d\lambda\right)^{\frac{1}{p}}\\ \leq C_p (1+|\zeta|)^{\frac{n-1}{2}} \|u\|_{L^1(|c(\lambda)|^{-2})}^{\frac{2}{p}-1} \|f\|_{L^p(\X)}.
\end{multline}

Thus, it follows from \eqref{Psi_p,p} and \eqref{Psi_p',p} that we have established the theorem for the cases $q=p$ and $q=p'$. Now, we turn our attention to the cases where $p<q<p'$. To tackle this, we will utilize real interpolation theory and consider the same measure spaces $(\X, dx)$ and $(Y,d{\mu_{1}})$ as before. Let us define a family of linear operator from $\X$ to measurable functions on $Y$ for any $q \in [1,\infty]$
\begin{align}
    \mathcal{T}_{q,\zeta}(f)(\lambda,k)= \frac{\widetilde{f} (\lambda+\zeta+ i\rho_q, k) (\lambda+\zeta+i\rho_q)}{u(\lambda)(\lambda+\zeta+i\rho_q+i 2\rho)},
\end{align}
where $\zeta$ is an arbitrary but fixed real number. Since \eqref{Psi_p,p} and \eqref{Psi_p',p} hold true for all $p\in [1,2]$, by taking $q \in (p,2]$ and replacing $p$ with $q$ in the inequalities above, we get
\begin{align}\label{T_q,q_P}
    \|\mathcal{T}_{q,\zeta}(f)\|_{(q,q)} \leq C_q   (1+|\zeta|)^{\frac{n-1}{2}} \|u\|_{L^1(|c(\lambda)|^{-2})}^{\frac{2}{q}-1} \|f\|_{L^q(\X)}.
\end{align}
and 
\begin{align}\label{T_q',q_P}
    \|\mathcal{T}_{q',\zeta}(f)\|_{(q',q)} \leq  C_q   (1+|\zeta|)^{\frac{n-1}{2}} \|u\|_{L^1(|c(\lambda)|^{-2})}^{\frac{2}{q}-1} \|f\|_{L^q(\X)}.
\end{align}
Additionally, we will prove that the following inequalities hold for any $q\in [1,2]$:
\begin{align}\label{T_q,1_p}
\|\mathcal{T}_{q,\zeta}(f)\|_{(q, 1)} \leq \|u\|_{L^1(|c(\lambda)|^{-2})} \|f\|_{L^1({\X})}
\end{align}
and
\begin{align}\label{T_q',1_p}
\|\mathcal{T}_{q',\zeta}(f)\|_{(q',1)} \leq \|u\|_{L^1(|c(\lambda)|^{-2})} \|f\|_{L^{1}({\X})}.
\end{align}
To prove the inequalities above, we write  
\begin{align*}
  \|\mathcal{T}_{q,\zeta}(f)\|_{(q, 1)} &= \int_{\R}\left( \int_K|\mathcal{T}_{q,\zeta}(f) (\lambda,k)|^q dk \right)^{\frac{1}{q}}  u(\lambda)^2 \lambda^2 (1+|\lambda|)^{n-3} d\lambda  \\ 
  & = \int_{\R} \left(\int_K \left|\widetilde{f}\left(\lambda+\zeta+i \rho_q , k \right)\right|^q \frac{|\lambda+\zeta+i\rho_q|^q}{|\lambda+\zeta+i\rho_q+i 2\rho|^q}\, d k\right)^{\frac{1}{q}}  u(\lambda) \lambda^2 (1+|\lambda|)^{n-3} d\lambda .  
    \end{align*}
Utilizing the estimates $|\lambda+i\rho_q|/|\lambda+i\rho_q+i 2\rho| \leq 1$ for $\lambda \in \R$, \eqref{sharp_c-2}, and recalling that $\rho_q =(2/q-1)\rho$,  we deduce from the inequality above that
\begin{align*}
    \|\mathcal{T}_{q,\zeta}(f)\|_{(q, 1)} &   \leq \int_{\R} \left(\int_K\left(\int_G\left|f(g)  e^{((i\lambda -(\frac{2}{q}-1) \rho -\rho)H(g^{-1}k)}\right|\, d g\right)^q \,d k\right)^{\frac{1}{q}} u(\lambda) |c(\lambda)|^{-2} d\lambda\\
    & \leq \|u\|_{L^1(|c(\lambda)|^{-2})}  \int_G |f(g)|\left(\int_K e^{-2 \rho H(g^{-1}k)} d k\right)^{\frac{1}{q}}\, d g,
\end{align*}
where in the last step we used Minkowski's integral inequality. Now we apply \eqref{F_C(K)=1} in the inequality above to obtain \eqref{T_q,1_p}. The proof of \eqref{T_q',1_p} follows similarly.

Finally, using the Riesz-Thorin interpolation theorem for mixed norm spaces between \eqref{T_q,q_P} and \eqref{T_q,1_p}, we obtain
 \begin{align}\label{T_q,p_P}
    \|\mathcal{T}_{q,\zeta}(f)\|_{(q,p)} \leq C_{p,q}  (1+|\zeta|)^{\frac{n-1}{2}} \|u\|_{L^1(|c(\lambda)|^{-2})}^{\frac{2}{p}-1} \|f\|_{L^p(\X)}
 \end{align}
Similarly, from \eqref{T_q',q_P} and \eqref{T_q',1_p}, we have
  \begin{align}\label{T_q',p_P}
    \|\mathcal{T}_{q,\zeta}(f)\|_{(q',p)} \leq C_{p,q}  (1+|\zeta|)^{\frac{n-1}{2}} \|u\|_{L^1(|c(\lambda)|^{-2})}^{\frac{2}{p}-1} \|f\|_{L^p(\X)}
 \end{align}
 for all $q \in (p,p')$ and $p\in [1,2]$. 
 
 Therefore, by considering the inequalities \eqref{T_q,p_P}, \eqref{T_q',p_P}, as well as \eqref{Psi_p,p} and \eqref{Psi_p',p}, we can conclude our theorem.
\end{proof}
\begin{remark} 
\begin{enumerate}
    \item It is natural to ask whether Theorem \ref{thm_paley_int} holds if we assume $\|u\|_{c,\infty}$ is finite, as we did in Theorem \ref{thm_paley_uni}, instead of $\|u\|_ {L^{1}(|c(\lambda)|^{-2})}<\infty$. We note that if $\|u\|_{c,\infty} < \infty$, then we can prove that the operator $\mathcal{S}_{i\rho,0}$, defined in \eqref{def_psi_z,c}, is of weak type $(1,1)$ (Theorem \ref{thm_paley_uni}). However, it may not be of strong type $(1,1)$, which seems necessary to apply Stein's analytic interpolation.
    \item If $q\not =2$, then the subsequent inequality follows from Theorem \ref{thm_paley_new}:  for any $p\leq q\leq  p'$, 
     \begin{align*}
        \left( \int_{\fa^*} \left( \int_K  \left|\widetilde {f}(\lambda +i\rho_{q} , k) \right|^{q}   \,  dk \right)^{\frac{p}{q}}  u(\lambda)^{2-p} |c(\lambda)|^{-2} d\lambda \right) ^{\frac{1}{p}} \leq C_{p,q} \|u\|_{L^1(|c(\lambda)|^{-2})}^{{\frac{2}{p}-1}} \|f\|_{L^p(\X)},
    \end{align*}
    for all $f\in L^p(\X)$. For $q=2$, the inequality above follows from Theorem \ref{thm_paley_uni}, by assuming the weaker condition $ \|u\|_{c,\infty} < \infty$ on $u$. 
    \item By setting $q=p$ in Theorem \ref{thm_paley_int}, we recover \cite[Theorem 1.7]{KPRS24} as a special case. When assuming that $\|u\|_ {L^{1}(|c(\lambda)|^{-2})}<\infty$, one can apply restriction theorems from \cite{RS_09, KRS10} to derive additional Fourier inequalities, as discussed in \cite[Remark 6.1]{KPRS24}. 
\end{enumerate}
\end{remark}
Our next goal is to prove the Hausdorff-Young-Paley inequality on non-unitary duals. To achieve this, we require a modified version of the Hausdorff-Young inequality compared to \cite[Theorem 4.6]{RS_09}, which is crucial for our proof. While we follow the Ray-Sarkar approach and use Stein's analytic interpolation, we have defined a different family of analytic linear operators following \cite{CGM93}.  This adaptation has enabled us to establish the Hausdorff-Young inequality specifically for the case when $n=2$, an aspect not covered in \cite[Theorem 4.6]{RS_09}.
\begin{proof}[\textbf{Proof of Theorem \ref{thm_H-Y_int}}]
Let $p\in [1,2)$ be fixed. We begin by considering the following measure spaces: $(\X, dx)$ and the product measure space \begin{align}\label{defn_Y,nu_0}
(Y, d{\mu_{0}}):= (\R \times K,   (1+|\lambda|)^{n-1}d\lambda dk).
\end{align} 
We define an analytic family of linear operators for compactly supported smooth functions $f$ in $\X$ to measurable functions on $Y$ by
    \begin{align}\label{defn_Tz_HYP}
        \mathcal{S}_z (f)(\lambda, k) = \Tilde{f} (\lambda+z, k) \frac{(\lambda+z)}{(\lambda+z+i 2\rho)} , \quad \text{where } 0 \leq  |\Im z| \leq \rho.
    \end{align}
 Following the same argument as for \eqref{def_psi_z,c}, $\mathcal{S}_z$ is an analytic family of linear operators defined on the strip $0 < |\Im z| < \rho$.

\noindent For $z=\xi \in \mathbb{R}$,  we can write using \eqref{defn_Tz_HYP},
\begin{equation}
\begin{aligned}
\left\|\mathcal{S}_{\xi} (f)\right\|_{ (2,2)}^2 & =  \int_{\mathbb{R}} \int_K|\widetilde{f}(\lambda+\xi, k)|^2 \frac{|\lambda+\xi|^2}{|\lambda+\xi+i 2\rho|^2}(1+|\lambda|)^{n-1} \,d \lambda\, dk .
\end{aligned}
\end{equation}
Next, by the change of variable $\lambda \rightarrow \lambda -\xi$, the above equation simplifies to
\begin{align*}
    \left\|\mathcal{S}_{\xi} (f)\right\|_{(2,2)}^2 & = \int_{\mathbb{R}} \int_K|\widetilde{f}(\lambda, k)|^2 \frac{|\lambda|^2}{|\lambda+i 2\rho|^2}(1+|\lambda-\xi|)^{n-1} \,d \lambda\, dk\\
    & \leq  \int_{\mathbb{R}} \int_K|\widetilde{f}(\lambda, k)|^2 \frac{|\lambda|^2}{|\lambda+i 2\rho|^2}(1+|\lambda|+|\xi|)^{n-1} \,d \lambda\, dk\\
    & \leq   (1+|\xi|)^{n-1} \int_{\mathbb{R}} \int_K|\widetilde{f}(\lambda, k)|^2 \frac{|\lambda|^2}{|\lambda+i 2\rho|^2}(1+|\lambda|)^{n-1} \,d \lambda\, dk.
\end{align*}
By employing the sharp estimate of $|c(\lambda)|^{-2}$ from Lemma \ref{est_c-2}, followed by the Plancherel theorem, we derive from the above
\begin{align*}
    \left\|\mathcal{S}_{\xi} (f)\right\|_{(2,2)}^2 &  \leq C (1+|\xi|)^{n-1} \int_{\mathbb{R}} \int_K|\widetilde{f}(\lambda, k)|^2 {|\lambda|^2}(1+|\lambda|)^{n-3} \,d \lambda\, dk\\
    & \leq C  (1+|\xi|)^{n-1} \int_{\mathbb{R}} \int_K|\widetilde{f}(\lambda, k)|^2  |c(\lambda)|^{-2} \,d \lambda\, dk\\
    &= C (1+|\xi|)^{n-1} \|f\|^2_{L^2(\X)}.
\end{align*}
Therefore, we have proved that there exist a constant $C>0$ such that for all $\xi\in \R$, the following holds
\begin{align}\label{Psi_2,2}
    \left\|\mathcal{S}_{\xi} (f)\right\|_{(2,2)} \leq C (1+|\xi|)^{\frac{n-1}{2}} \|f\|_{L^2(\X)}.
\end{align}
Next, for $z=\xi+i \rho, \xi \in \mathbb{R}$, we have from \eqref{defn_Tz_HYP} that
\begin{equation}
\begin{aligned}
\int_K\left|\mathcal{S}_{\xi+i \rho} (f)(\lambda, k)\right| \,d k & \leq \int_K|\widetilde{f}(\xi+i \rho+\lambda, k)| \left|\frac{\lambda+\xi+i \rho}{\lambda+\xi+i 3\rho} \right| \,d k.
\end{aligned}
\end{equation}
Since for all $\lambda \in \R$, $|{\lambda+i \rho}| / |{\lambda+i 3\rho}| \leq 1$, we can utilize this inequality along with the definition \eqref{defn:hft} of $\widetilde{f}$ to write
\begin{align*}
    \int_K\left|\mathcal{S}_{\xi+i \rho} (f)(\lambda, k)\right| \,d k & \leq \int_K \left| \int_{G} f(g) e^{(i(\lambda+\xi+i\rho)- \rho)H(g^{-1}k)} dg\right| dk\\
    & \leq  \int_{G}  |f(g)| \left( \int_K e^{-2\rho H(g^{-1}k)}\, dk\,\right)  dg\\
    & = \int_{G}  |f(g)|  dg,
\end{align*}
where in the last step we used \eqref{F_C(K)=1}. Thus, we can say
\begin{align}\label{Psi_1,inf}
    \| \mathcal{S}_{\xi+i\rho }(f)\|_{(1,\infty)} \leq    \|f\|_{L^1(\X)}.
\end{align}
Now, by setting $z=\xi- i \rho$ and following a similar argument as before, we can derive the following
\begin{align*}
   |\mathcal{S}_{\xi-i \rho} (f)(\lambda, k)|& \leq   C \left| \int_{G} f(g) e^{\left(i(\lambda+\xi-i\rho)- \rho \right)H(g^{-1}k)} dg\right| \left|\frac{\lambda+\xi-i \rho}{\lambda+\xi+i \rho} \right|\\
    & \leq  \int_{G} |f(g)| \, dg.
\end{align*}
Therefore, we have  the following inequality
\begin{align}\label{Psi_inf,inf}
    \| \mathcal{S}_{\xi-i \rho}(f)\|_{(\infty,\infty)} \leq    \|f\|_{L^1(\X)}.
\end{align}
By employing analytic interpolation for mixed norm spaces, we first derive from \eqref{Psi_2,2} and \eqref{Psi_1,inf} for any $1\leq p\leq 2$, 
\begin{align*}
     \| \mathcal{S}_{i\rho_p}(f)\|_{(p,p')} \leq C_p \|f\|_{L^p(\X)},
\end{align*}
which, by substituting the definition of mixed norm space, transforms into 
\begin{align}\label{Psi_p,p'}
    \left(\int_{\mathbb{R}}\left(\int_K\left|\widetilde{f}\left(\lambda+i \rho_p, k\right)\right|^p \frac{|\lambda+ i \rho_p|^{p}}{|\lambda+i\rho_p+i2\rho|^p} \, d k\right)^{\frac{p^{\prime}}{p}} (1+|\lambda|)^{n-1}  d \lambda\right)^{\frac{1}{p^{\prime}}} \leq C_p \|f\|_{L^p(\X)}.
\end{align}
Similarly, by analytically interpolating again, this time with \eqref{Psi_2,2} and \eqref{Psi_inf,inf}, we obtain
\begin{align}
     \| \mathcal{S}_{i\rho_{p'}}(f)\|_{(p',p')} \leq C_p\|f\|_{L^p(\X)}.
\end{align}
Substituting the definition of mixed norm space, we can express the inequality above as
\begin{align}\label{Psi_p',p'}
\left(\int_{\mathbb{R}}\int_K\left|\widetilde{f}\left(\lambda+i \rho_{p'}, k\right)\right|^{p'} \frac{|\lambda+i \rho_{p'}|^{p'}}{|\lambda+i\rho_{p'}+i2\rho|^{p'}}  (1+|\lambda|)^{n-1} \, d k\, d \lambda\right)^{\frac{1}{p^{\prime}}} \leq C_p \|f\|_{L^p(\X)}.
\end{align}
Therefore, from \eqref{Psi_p,p'} and \eqref{Psi_p',p'}, we can conclude that the theorem is proven for $q=p$ and $q=p'$ respectively. Next, to prove the theorem for $p<q<p'$, we can define 
\begin{align}
    \mathcal{T}_q(f)(\lambda,k)= \widetilde{f} (\lambda+i\rho_q, k) \frac{(\lambda+i\rho_q)}{(\lambda+i\rho_q+i 2\rho)}, \quad \lambda\in \R, k\in K,
\end{align}
and follow a similar calculation as in \cite[(4.20)]{RS_09}.
\end{proof}
In the following, we explore the relation between our Hausdorff-Young inequality presented in Theorem \ref{thm_H-Y_int} and Ray-Sarkar's result Theorem \ref{thm_RS_HYinq}.
\begin{corollary}\label{cor_Our_HY>}
     Let $p\in [1,2)$. Then from \eqref{sharp_c-2} the sharp estimate of $|c(\lambda)|^{-2}$, we observe that following inequality holds for any $q \in [p,p']\setminus\{2\}$
   \begin{align*}
        |c(\lambda)|^{-2} \asymp \lambda^2 (1+|\lambda|)^{n-3} \lesssim  \frac{ |\lambda+i\rho_q|^{p'}}{|\lambda+i\rho_q +i 2\rho|^{p'}} (1+|\lambda|)^{n-1},
    \end{align*}
    since  in this case $  { |\lambda+i\rho_q|}/{|\lambda+i\rho_q +i 2\rho|} \asymp 1$. So, for all  $q \in [p,p']\setminus\{2\}$, we can  utilize the inequality above to obtain
    \begin{align*}
    & \left( \int_{\R} \left( \int_K  \left|\widetilde {f}(\lambda +i\rho_q , k) \right|^q \,  dk \right)^{\frac{p'}{q}}  |c(\lambda)|^{-2} d\lambda \right) ^{\frac{1}{p'}}\\
      &  \leq C \left( \int_{\R} \left( \int_K  \left|\widetilde {f}(\lambda +i\rho_q , k) \right|^q \frac{ |\lambda+i\rho_q|^{q}}{|\lambda+i\rho_q +i 2\rho|^{q}}  \,  dk \right)^{\frac{p'}{q}}  (1+|\lambda|)^{n-1} d\lambda \right) ^{\frac{1}{p'}}
      \leq C\|f\|_{L^p(\X)},
    \end{align*}
    where in the last inequality, we used our result Theorem \ref{thm_H-Y_int}. Thus, as a consequence we obtain the Hausdorff-Young inequality  by Ray and Sarkar (Theorem \ref{thm_RS_HYinq}).  Interestingly we observe that for the $q=2$ case, Theorem \ref{thm_H-Y_int} follows as a corollary of Theorem \ref{thm_RS_HYinq}. 
    \end{corollary}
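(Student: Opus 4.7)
The plan is to establish the comparison of weights pointwise on $\R$ and then invoke Theorem \ref{thm_H-Y_int}. The core observation is that the ratio $|\lambda+i\rho_q|/|\lambda+i\rho_q+i2\rho|$ is bounded above and below away from zero when $q \neq 2$. Indeed, since $\rho_q = (2/q - 1)\rho$, we have $\rho_q + 2\rho = (2/q + 1)\rho$, which is nonzero for all $q \in [p,p']$; in particular neither $\lambda + i\rho_q$ nor $\lambda + i\rho_q + i2\rho$ can vanish on $\R$. Both quantities behave like $|\lambda|$ as $|\lambda|\to\infty$ and are bounded below by positive constants uniformly in $\lambda\in\R$, so
\[
\frac{|\lambda+i\rho_q|}{|\lambda+i\rho_q+i2\rho|} \asymp 1 \qquad (q\neq 2).
\]

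Given this equivalence, combined with the sharp estimate $|c(\lambda)|^{-2} \asymp \lambda^2(1+|\lambda|)^{n-3}$ from Lemma \ref{est_c-2} and the elementary bound $\lambda^2 \leq (1+|\lambda|)^2$, I would derive the pointwise inequality
\[
|c(\lambda)|^{-2} \lesssim \frac{|\lambda+i\rho_q|^{p'}}{|\lambda+i\rho_q+i2\rho|^{p'}}(1+|\lambda|)^{n-1}
\]
for all $\lambda\in\R$ and all $q \in [p,p']\setminus\{2\}$. Substituting this into the left-hand side of the Ray-Sarkar inequality \eqref{HY_ineq} and then applying Theorem \ref{thm_H-Y_int} yields the desired bound by $C\|f\|_{L^p(\X)}$, thereby recovering Theorem \ref{thm_RS_HYinq} as a consequence of Theorem \ref{thm_H-Y_int}.

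For the remaining case $q = 2$, the converse direction is to be noted: here $\rho_2 = 0$ and the weight $|\lambda+i\rho_2|^2/|\lambda+i\rho_2+i2\rho|^2 = \lambda^2/|\lambda+i2\rho|^2$ is comparable to $\lambda^2/(1+|\lambda|)^2$, so that the integrand weight in Theorem \ref{thm_H-Y_int} reduces to $\lambda^2(1+|\lambda|)^{n-3} \asymp |c(\lambda)|^{-2}$. Consequently the $q=2$ case of our inequality is, up to constants, precisely the $q=2$ case of \eqref{HY_ineq}, so Theorem \ref{thm_H-Y_int} follows from Theorem \ref{thm_RS_HYinq} in this single endpoint.

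There is no real obstacle here since both directions reduce to the elementary observation on the ratio of linear factors; the only subtle point is to check that the case $q=2$ is genuinely excluded from the first inequality (because then $\rho_q = 0$ and the factor $|\lambda+i\rho_q|^{p'} = |\lambda|^{p'}$ vanishes at the origin, destroying the equivalence with $(1+|\lambda|)^{n-1}$), which is why the derivation is split into the two complementary regimes $q \neq 2$ and $q = 2$.
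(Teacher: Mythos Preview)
Your proposal is correct and follows essentially the same approach as the paper: the pointwise comparison $|c(\lambda)|^{-2}\lesssim \frac{|\lambda+i\rho_q|^{p'}}{|\lambda+i\rho_q+i2\rho|^{p'}}(1+|\lambda|)^{n-1}$ via the equivalence of the ratio for $q\neq 2$, followed by an application of Theorem~\ref{thm_H-Y_int}, and the reverse observation at $q=2$. Your added explanation of why the case $q=2$ must be excluded (the vanishing of $|\lambda+i\rho_q|$ at the origin) is a helpful clarification not made explicit in the paper.
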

We will conclude this section by proving (Theorem \ref{thm_H-Y-P}) an analog of the Hausdorff–Young –Paley inequality on non-unitary duals using an analytic version of the Stein-Weiss interpolation theorem.
\begin{proof}[\textbf{Proof of Theorem \ref{thm_H-Y-P}}]
Let $1\leq p\leq 2$, and $p\leq q_0, q_1\leq  p'$. We define the following family of analytic linear operators for compactly supported smooth functions $f$ in $\X$
    \begin{align}
        \mathcal{S}_z (f)(\lambda, k) = \Tilde{f} (\lambda+z, k) \frac{(\lambda+z)}{(\lambda+z+i 2\rho)} , \quad \text{where } 0 \leq  |\Im z| \leq \rho,
    \end{align}
    for all $\lambda \in \R, k\in K$. Then, according to Theorem \ref{thm_H-Y_int}, we can conclude that $\mathcal{S}_{i\rho_{q_0}}$ is a bounded operator from $L^p(\X)$ to $L^{(q_1,p')}(Y, d{\mu_0})$, where we recall $(Y, d{\mu_0})$ from \eqref{defn_Y,nu_0}. Furthermore, for any arbitrary but fixed $\zeta \in \R$, we can write
    \begin{align*}
         & \|\mathcal{S}_{\zeta+i \rho_{q_1}} (f)\|_{L^{(q_1,p')}(Y,{ {d\mu_0}})}^{p'}\\
         &= C     \int_{\R} \left( \int_K  \left|\widetilde {f}(\lambda+\zeta +i\rho_{q_1} , k) \right|^{q_1}  \frac{ |\lambda+\zeta+ i\rho_{q_1}|^{q_1}}{|\lambda+\zeta+i\rho_{q_1} +i 2\rho|^{q_1}}  \,  dk \right)^{\frac{p'}{q_1}}  (1+|\lambda|)^{n-1} d\lambda.
    \end{align*} 
    By the change of variable  $\lambda \rightarrow \lambda-\zeta$ in the integral above, we get
   \begin{align*}
       & \|\mathcal{S}_{\zeta+i \rho_{q_1}} (f)\|_{L^{(q_1,p')}(Y,{ {d\mu_0}})}^{p'} \\
       &= C    \int_{\R} \left( \int_K  \left|\widetilde {f}(\lambda +i\rho_{q_1} , k) \right|^{q_1}  \frac{ |\lambda+ i\rho_{q_1}|^{q_1}}{|\lambda+i\rho_{q_1} +i 2\rho|^{q_1}}  \,  dk \right)^{\frac{p'}{q_1}}  (1+|\lambda-\zeta|)^{n-1} d\lambda.
   \end{align*}
   Using the inequality \begin{align*}
       (1+|\lambda-\zeta|)\leq (1+|\lambda|)(1+|\zeta|),
   \end{align*} we obtain
    \begin{align*}
       & \|\mathcal{S}_{\zeta+i \rho_{q_1}} (f)\|_{L^{(q_1,p')}(Y,{ {d\mu_0}})}^{p'} \\
        & \leq C (1+|\zeta|)^{n-1} \int_{\R} \left( \int_K  \left|\widetilde {f}(\lambda +i\rho_{q_1} , k) \right|^{q_1}  \frac{ |\lambda+ i\rho_{q_1}|^{q_1}}{|\lambda+i\rho_{q_1} +i 2\rho|^{q_1}}  \,  dk \right)^{\frac{p'}{q_1}} (1+|\lambda|)^{n-1}  d\lambda.
       \end{align*}   
   Here we apply Theorem \ref{thm_H-Y_int}, to derive
  \begin{align}\label{Psi_q1_p'}
       \|\mathcal{S}_{\zeta+i \rho_{q_1}} (f)\|_{L^{(q_1,p')}(Y,{ {d\mu_0}})}  \leq C_{p,q_1}  (1+|\zeta|)^{\frac{n-1}{p}}   \|f\|_{p}.
  \end{align}
Next, assuming the function $u$ satisfies condition \eqref{eqn_int_u}, we can apply Theorem \ref{thm_paley_new}. But before delving into that, let us define the following product measure space for each fixed $p$
 \begin{align}
     (Y, d {\mu_2})  = (\R \times K, u(\lambda)^{2-p} |c(\lambda)|^{-2} d\lambda  dk). 
 \end{align}
Consequently, we have from  \eqref{sharp_c-2} and  Theorem \ref{thm_paley_new}  
 \begin{equation}\label{Psi_q0_p}
\begin{aligned}
    & \|\mathcal{S}_{\zeta+i \rho_{q_0}} (f)\|_{L^{(q_0,p)}{(Y,d\mu_2)}}\\
    &\asymp \left( \int_{\R} \left( \int_K  \left|\widetilde {f}(\lambda+\zeta +i\rho_{q_0} , k) \right|^{q_0}  \frac{ |\lambda+\zeta+i\rho_{q_0}|^{q_0}}{|\lambda+\zeta+i\rho_{q_0} +i 2\rho|^{q_0}}  \,  dk \right)^{\frac{p}{q_0}}  u(\lambda)^{2-p} \lambda^2 (1+|\lambda|)^{n-3} d\lambda \right) ^{\frac{1}{p}} \\
    &\leq C_{p,q_1} (1+|\zeta|)^{\frac{n-1}{2}} \|u\|_{L^1(|c(\lambda)|^{-2})}^{{\frac{2}{p}-1}}\|f\|_{L^p(\X)},
\end{aligned}
\end{equation}
Therefore, by utilizing the inequality \eqref{Psi_q1_p'} and \eqref{Psi_q0_p}, and applying Corollary \ref{cor_sw_ana} with $Q_0=(q_0, p)$, $Q_1=(q_1,p')$, and
      \begin{align*}
      {w}_0(\lambda)= u(\lambda)^{2-p} \lambda^2 (1+|\lambda|)^{n-3}, \qquad w_1(\lambda)= (1+|\lambda|)^{n-1},
    \end{align*} 
    we can derive that
    \begin{align}\label{eqn_af_analytic}
        \left( \int_{\R} \left( \int_K  \left|\widetilde {f}(\lambda+i\rho_{q_\theta} , k) \right|^{q_{\theta}}  \frac{ |\lambda+i\rho_{q_{\theta}}|^{q_{\theta}}}{|\lambda+i\rho_{q_\theta} +i 2\rho|^{q_{\theta}}}  \,  dk \right)^{\frac{b}{q_{\theta}}}   w_{\theta}(\lambda) d\lambda \right) ^{\frac{1}{b}}\leq C_{b,q_{\theta}}  {\|u\|_{L^1(|c(\lambda)|^{-2})}^{{(\frac{2}{p}-1)}(1-\theta)}} \|f\|_{L^p(\X)},
    \end{align}
    where 
    \begin{align*}
       \frac{1}{q_{\theta}}= \frac{1-\theta}{q_0}+\frac{\theta}{q_1}, \qquad \frac{1}{b}= \frac{1-\theta}{p}+\frac{\theta}{p'}, \qquad \text{and} \qquad  w_{\theta}(\lambda)=  {w}_0(\lambda)^{\frac{b(1-\theta)}{p}} {w}_1(\lambda)^{\frac{b\theta}{p'}}.
    \end{align*}
    Solving the expression above for $\theta\in (0,1)$, we get $\theta= \frac{b-p}{b(2-p)}$, which implies
    \begin{align*}
          \frac{b(1-\theta)}{p}= \frac{b-bp+p}{p(2-p)}= \frac{p'-b}{p'(2-p)}, \qquad \text{and}\qquad \frac{b \theta}{p'}=  \frac{b-p}{p'(2-p)}
    \end{align*}
   Putting the expressions above in $w_{\theta}$, we obtain
    \begin{align*}
        w_{\theta}(\lambda)= u(\lambda)^{1-\frac{b}{p'}} \lambda^{  \frac{2(p'-b)}{p'(2-p)}} (1+|\lambda|)^{n-3+\frac{2(b-p)}{p'(2-p)}} .
    \end{align*}      
    Finally, substituting the expression above of $w_{\theta}$ in \eqref{eqn_af_analytic}, we conclude our theorem. 
\end{proof}
    \begin{proof}[\textbf{Proof of Corollary \ref{cor_of_HYP}}] We first note that for a given $p \in (1,2]$, we have $\frac{2(p'-b)}{p'(2-p)} \leq 2$ for all $b\in [p,p'] $ and $\frac{2(p'-p)}{p'(2-p)} \geq 2$.  When $\lambda$ near zero, specifically if $|\lambda| \leq 1$, using  $\frac{2(p'-b)}{p'(2-p)} \leq 2$, we have
    \begin{align*}
        |c(\lambda)|^{-2} \lesssim |\lambda|^{2} \lesssim |\lambda|^{\frac{2(p'-b)}{p'(2-p)} }.
    \end{align*}
    Similarly, when $|\lambda|>1$ using $\frac{2(p'-p)}{p'(2-p)} \geq 2$,  we can write 
    \begin{align*}
        |c(\lambda)|^{-2} \lesssim (1+|\lambda|)^{n-1} \lesssim  (1+|\lambda|)^{n-3+ \frac{2(p'-p)}{p'(2-p)}}.
    \end{align*}
    Next, we decompose the integral on the left-hand side of \eqref{eqn_HYP_P} with respect to $\lambda $ into regions where  $|\lambda|$ is near zero and away from zero. Then, plugging in the inequalities above and utilizing \eqref{eqn_HYP_nP},   \eqref{eqn_HYP_P} follows.
    \end{proof}
\begin{remark}
    We note that one can extend an analogue of the Hausdorff-Young inequality for non-unitary duals (Theorem \ref{thm_H-Y_int})  to higher-rank symmetric spaces. However, for the Paley inequality (Theorem \ref{thm_paley_int}), we rely on both upper and lower estimates of $|c(\lambda)|^{-2}$, which are available only in the rank one case \eqref{sharp_c-2}. We plan to revisit this problem in the near future.
    \end{remark}

\section{Harmonic $NA$ groups, revisited}\label{sec_NA_group} 
In this section, we provide a brief overview of how the results in this article can be extended to the setting of harmonic $NA$ groups, also known as Damek-Ricci spaces. These groups were introduced by Damek and Ricci as a family of counterexamples to the Lichnerowicz conjecture in the noncompact case \cite{DR_92}. A harmonic $NA$ group $S$ is a semidirect product $N\rtimes A$, where $N$ is a $H$-type group, $A=(0,\infty)$, and the action of $A$ on $N$ is anisotropic dilations. For more details, we refer to \cite{ACD97, ADY96}. 

In what follows, $S=NA$ is a harmonic $NA$ group equipped with a left Haar measure $dx$. We recall from \cite{ACD97} that in this context, there exists an analogue of the (Helgason) Fourier transform $\widetilde{f}$ on symmetric spaces realized in the noncompact picture, where the role of the sphere is played by the noncompact subgroup $N$. Using this Fourier transform, for a given bounded measurable function $m$ on $\R$, we can define the Fourier multiplier operator $T_m$ as follows
\begin{align}
    \widetilde{T_m f}(\lambda, n) = m(\lambda) \widetilde{f}(\lambda, n), \qquad \lambda \in \R, n\in N,
\end{align}
for all $f\in C_c^{\infty}(S)$. 
Following the works \cite{CGHM94, GMM97}, Astengo studied the $L^p$-boundedness problems of multipliers in \cite{As95a, As95b} in this setting. Anker et al. \cite{ACD97} obtained sharp and simple criteria for the $L^p \rightarrow L^p$ and the weak $L^1 \rightarrow L^1$  boundedness of positive convolution kernels, which illustrated the similarity with symmetric spaces. Later, several authors studied analogous problems on spectral multipliers in these groups; for further details, see \cite{HMM05, Va07} and the reference within.  Building on the aforementioned results, we establish the following criteria for the  $L^p \rightarrow L^q$  boundedness of Fourier multipliers on harmonic $NA$ groups.
\begin{theorem}\label{thm_Lp-Lq_NA}
     Let $1<p\leq 2 \leq q<\infty$. Then there exists a constant $C_{p,q} > 0$, such that the following holds
     \begin{align*}
          \|T_m f\|_{L^q(S)}& \leq C_{p,q} \left( \sup\limits_{\alpha>0}\,\alpha \left( \int\limits_{\{ \lambda \in \R : |m(\lambda)|>\alpha \}   }  |c(\lambda)|^{-2} d\lambda\right)^{{\frac{1}{p}-\frac{1}{q}}} \right) \|f\|_{L^p(S)}
     \end{align*}
     for all $f \in L^p(S)$, where $|c(\lambda)|^{-2} \, d\lambda$ is the Plancherel measure on $S$. 
\end{theorem}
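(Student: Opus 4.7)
The strategy is to mirror the proof of Theorem~\ref{thm_Lp-Lq_int} verbatim, with the compact subgroup $K$ replaced by the nilpotent group $N$ in the noncompact realization of the Helgason-type Fourier transform on $S = NA$. The whole argument rests on three ingredients: (i) a restriction-type $L^1$-$L^2$ bound for $\widetilde f$, (ii) the Plancherel theorem on $S$, and (iii) the Stein-Weiss analytic interpolation on mixed-norm spaces from Section~\ref{sec_SW_analytic}.

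First, following \cite{ADY96, ACD97}, the noncompact realization of the Fourier transform writes $\widetilde f(\lambda,n)$ as the integral of $f$ against a Poisson-type kernel $\mathcal{P}_\lambda(\cdot,n)$, and the analogue of the averaging identity \eqref{F_C(K)=1} holds for a suitable measure $d\nu$ on $N$. Minkowski's integral inequality then delivers the restriction bound
\begin{equation*}
\left( \int_N |\widetilde f(\lambda,n)|^2 \, d\nu(n) \right)^{1/2} \leq \|f\|_{L^1(S)}, \qquad \lambda \in \R,
\end{equation*}
which is the exact analogue of \eqref{f_L2K<L1}. Combined with the Plancherel theorem on $S$ giving the $L^2$ endpoint, interpolation on the mixed-norm spaces $L^{(2,p')}(\R \times N, |c(\lambda)|^{-2} d\lambda\, d\nu)$ produces the $NA$-analogue of Theorem~\ref{thm_HY_UD}, exactly as in its proof.

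Next, the sublinear operator $\mathcal T f(\lambda) = u(\lambda)^{-1} \|\widetilde f(\lambda,\cdot)\|_{L^2(N,d\nu)}$ is bounded on $L^2(S)$ by Plancherel and is of weak type $(1,1)$ into $L^1(\R, u(\lambda)^2 |c(\lambda)|^{-2} d\lambda)$ by the same distribution-function computation as in the proof of Theorem~\ref{thm_paley_uni}; Marcinkiewicz interpolation yields the Paley inequality in this setting. A further Stein-Weiss interpolation, as in the proof of Theorem~\ref{thm_HYP_uni}, gives the Hausdorff-Young-Paley inequality on $S = NA$.

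Finally, I would follow the proof of Theorem~\ref{thm_Lp-Lq_int} line by line: reduce to the case $p \leq q'$ via the duality $\|T_m\|_{L^p \to L^q} = \|T_{\overline{m}}\|_{L^{q'} \to L^{p'}}$, apply the dual Hausdorff-Young inequality, and then apply HYP with $b = q'$ and $u = |m|^r$, where $1/r = 1/p - 1/q$; the rearrangement of the supremum at the end is identical to the symmetric-space computation. The only genuinely new ingredient is the restriction estimate, so the main obstacle is identifying the correct form of the $L^2$-normalization identity for $\mathcal{P}_\lambda$ in the Damek-Ricci literature and checking that the Plancherel density has the same shape $|c(\lambda)|^{-2} d\lambda$ as on symmetric spaces; once these are settled, every subsequent step goes through unchanged.
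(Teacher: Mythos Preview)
Your proposal is correct and follows essentially the same approach as the paper: replace $K$ by $N$, invoke the Ray--Sarkar restriction estimate to obtain the analogue of Theorem~\ref{thm_HY_UD}, define the same sublinear operator $\mathcal T f(\lambda) = u(\lambda)^{-1}\|\widetilde f(\lambda,\cdot)\|_{L^2(N)}$ to get the Paley inequality via Marcinkiewicz, interpolate to obtain the Hausdorff--Young--Paley inequality, and then repeat the proof of Theorem~\ref{thm_Lp-Lq_int} verbatim. The only cosmetic difference is that the paper cites the restriction and Hausdorff--Young inequalities directly from \cite{RS_09} rather than re-deriving them, but the logical structure is identical.
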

The proof of the theorem above follows similarly to Theorem \ref{thm_Lp-Lq_int}, so we will briefly outline the key steps. Ray and Sarkar \cite{RS_09} originally developed the theory of restriction theorems and proved the Hausdorff-Young inequality in the context of harmonic $NA$ groups. We can utilize their results and the calculations presented in Section \ref{sec_HYP_UD} of this article to prove  Theorem \ref{thm_Lp-Lq_NA}, with the nilpotent group $N$ playing the role of $K$. Specifically, we already have the required Hausdorff-Young inequality from \cite[Theorem 4.2]{RS_09}. To prove the Paley-type inequality, we define the following sublinear operator (see  \eqref{def_T_pal})
\begin{align*}
\mathcal{T}f(\lambda) = \frac{1}{u(\lambda)}\|\widetilde{f}(\lambda,\cdot)\|_{L^2(N)}, \quad \lambda \in \R.
\end{align*}
Then, the proof for the Paley-type inequality follows similarly to that of Theorem \ref{thm_paley_uni}. Consequently, using interpolation theory, we can prove a similar Hausdorff-Young-Paley inequality as in Theorem \ref{thm_HYP_uni}. Once we have this inequality, we can follow the technique used in the proof of Theorem \ref{thm_Lp-Lq_int} with obvious modifications.

\begin{remark}
\begin{enumerate}
    \item The other results in this article can be similarly extended to harmonic $NA$ groups by replacing the role of $K$ with $N$. In particular, all the results in Section \ref{subs_App_Lp_Lq} are true in this setting.  Since the methods are similar, we omit the detailed proofs. 
    \item We would like to mention that Kumar and Ruzhansky \cite[Theorem 5.6]{KR21} proved an analogue of Theorem \ref{thm_Lp-Lq_NA} for the Jacobi transform, particularly on harmonic $NA$ groups in the radial setting.
    \item In \cite{AR20} Akylzhanov and Ruzhansky proved the $L^p \rightarrow L^q$ boundedness for general locally compact separable unimodular groups. Our results extend this to harmonic $NA$ groups, which are non-unimodular. To the best of our knowledge, these H\"ormander type $L^p \rightarrow L^q$ results on harmonic $NA$ groups are the first of their kind and open the possibility for extending this investigation to a broader context of non-unimodular groups.
\end{enumerate}
\end{remark}

   \section{Theory of Stein-Weiss analytic interpolation}\label{sec_SW_analytic}
    In this section, we will develop the theory of Stein-Weiss analytic interpolation for mixed norm spaces. Let $(X, dx)$ and $(Y,dy)$ be two $\sigma$-finite measure spaces. Throughout this section, we will assume that all linear operators $T$ under consideration adhere to the following condition that $ \int_Y T f(y) g(y) dy$ is well-defined for all simple functions $f$ and $g$ with finite measure support. Then, the following theorem is well known; see {\cite[Theorem 1]{Stein_56}}.  
    \begin{theorem}\label{thm_steinlytic}
        Let $1\leq  p_0,p_1, q_0,q_1\leq  \infty$ and let $T_z $ be an analytic family of linear operators between $X$ and $Y$ of \textit{admissible growth} (in the sense of \cite[(1.1)]{Stein_56}), defined in the strip $\{ z \in \C : 0 \leq  \Re z \leq 1 \}$. Suppose that for any simple function $f$ with finite measure support on $X$ and $g$ on $Y$, the expression $ \int_Y T_z f(y) g(y) dy$
    is absolutely convergent and forms a complex analytic function in $z$ on the strip. Assume that we have the strong-type bounds 
        \begin{align*}
         \|T_{j+i\xi}(f)\|_{L^{q_j} (Y,dy)} \leq A_j(\xi) \|f\|_{L^{p_j}(X, dx)}
        \end{align*}
        for all simple functions $f$ of finite measure support, where $j = 0, 1$ and $\log |A_j(\xi)| \leq C e^{a|\xi|}$, with $a<\pi$. Then we have 
             \begin{align*}
            \| T_\theta f \|_{L^{q_\theta}(Y, dy)} \leq A_\theta \| f\|_{L^{p_\theta} (X, dx)}
        \end{align*}
        for all simple functions $f$ of finite support and $0\leq \theta \leq 1$, where
        \begin{equation}\label{p_theta_exp}
    \begin{aligned}
       &\frac{1}{p_\theta}=  \frac{1-\theta}{p_0}+\frac{\theta}{p_1}, \qquad \quad \quad \frac{1}{q_{\theta}}= \frac{1-\theta}{q_0}+\frac{\theta}{q_1}.
       \end{aligned}
       \end{equation}
    \end{theorem}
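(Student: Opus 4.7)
The plan is to reduce the claim to a bilinear estimate via duality and then apply the Hadamard three-lines lemma to a suitably chosen analytic function, following the standard complex-interpolation scheme. By density and homogeneity, it suffices to show
\begin{equation*}
\left| \int_Y (T_\theta f)(y)\, g(y)\, dy \right| \leq A_\theta
\end{equation*}
for all simple functions $f$ on $X$, $g$ on $Y$ of finite measure support with $\|f\|_{L^{p_\theta}(X)} = \|g\|_{L^{q_\theta'}(Y)} = 1$, where $q_\theta'$ denotes the conjugate exponent of $q_\theta$. Writing $f = \sum_j |a_j| e^{i\alpha_j}\chi_{A_j}$ and $g = \sum_k |b_k| e^{i\beta_k}\chi_{B_k}$, I would introduce the Stein auxiliary families
\begin{equation*}
f_z := \sum_j |a_j|^{p_\theta P(z)} e^{i\alpha_j}\chi_{A_j}, \qquad g_z := \sum_k |b_k|^{q_\theta' Q(z)} e^{i\beta_k}\chi_{B_k},
\end{equation*}
with $P(z) = \frac{1-z}{p_0}+\frac{z}{p_1}$ and $Q(z) = \frac{1-z}{q_0'} + \frac{z}{q_1'}$, so that $P(\theta) = 1/p_\theta$, $Q(\theta) = 1/q_\theta'$, and thus $f_\theta = f$, $g_\theta = g$.

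Setting $F(z) := \int_Y (T_z f_z)(y)\, g_z(y)\, dy$, the finite-measure support of $f,g$ together with the hypothesis that $\int_Y (T_z \chi_{A})(y)\, \chi_B(y)\, dy$ is analytic for rectangles $A,B$ shows that $F$ is a finite complex-linear combination of such analytic kernel pairings with analytic coefficients, hence holomorphic on the open strip $0 < \Re z < 1$ and continuous on its closure. The boundary estimates then come from H\"older's inequality combined with the two strong-type hypotheses: on the line $\Re z = 0$, the identity $\Re P(iy) = 1/p_0$ gives $|f_{iy}(x)| = \sum_j |a_j|^{p_\theta/p_0}\chi_{A_j}(x)$ and hence $\|f_{iy}\|_{L^{p_0}(X)} = \|f\|_{L^{p_\theta}(X)}^{p_\theta/p_0} = 1$, and similarly $\|g_{iy}\|_{L^{q_0'}(Y)} = 1$; together with the $L^{p_0} \to L^{q_0}$ bound for $T_{iy}$ this yields $|F(iy)| \leq A_0(y)$. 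A symmetric computation on $\Re z = 1$ produces $|F(1+iy)| \leq A_1(y)$.

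The main obstacle is that the admissible-growth condition $\log|A_j(\xi)| \leq C e^{a|\xi|}$ with $a<\pi$ is strictly weaker than the boundedness required by the usual Hadamard three-lines lemma. I would handle this by multiplying $F$ by the regularizing entire function $G_\delta(z) := e^{\delta(z^2-\theta^2)}$ for small $\delta > 0$: the real part of its exponent equals $\delta(x^2 - y^2 - \theta^2)$ on $z = x+iy$, so on the vertical boundary lines its modulus decays like $e^{-\delta y^2}$, which dominates $e^{a|y|}$ for any $a<\pi$. The product $F(z)G_\delta(z)$ is then bounded on the closed strip and analytic on its interior, so the three-lines lemma applies and yields, after evaluating at $z=\theta$ (where $G_\delta(\theta)=1$),
\begin{equation*}
|F(\theta)| \leq \bigl(\sup_{y\in\R} A_0(y)\, e^{-\delta y^2}\bigr)^{1-\theta} \bigl(\sup_{y\in\R} A_1(y)\, e^{-\delta y^2}\bigr)^{\theta}.
\end{equation*}
Letting $\delta \to 0^+$ and identifying the limit of the right-hand side with the constant $A_\theta$ produces the desired bilinear inequality, and duality completes the proof.
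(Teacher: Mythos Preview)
The paper does not give its own proof of this theorem; it simply quotes it as Stein's classical result with a reference to \cite[Theorem 1]{Stein_56}. Your outline is indeed Stein's strategy---reduce to a bilinear pairing, embed $f,g$ into analytic families $f_z,g_z$, and apply a three-lines argument to $F(z)=\int_Y (T_zf_z)\,g_z\,dy$.

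There is, however, a genuine gap in the regularization step. The hypothesis reads $\log|A_j(\xi)|\le Ce^{a|\xi|}$, i.e.\ $|A_j(\xi)|\le \exp\bigl(Ce^{a|\xi|}\bigr)$, which is \emph{double}-exponential in $|\xi|$. Your regularizer $G_\delta(z)=e^{\delta(z^2-\theta^2)}$ has modulus $\asymp e^{-\delta y^2}$ on the boundary lines, and while $e^{-\delta y^2}$ certainly dominates $e^{a|y|}$, it does not dominate $\exp(Ce^{a|y|})$: since $Ce^{a|y|}-\delta y^2\to+\infty$, the product $A_j(y)\,|G_\delta(j+iy)|$ is unbounded. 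Thus $FG_\delta$ is not bounded on the strip and the three-lines lemma does not apply. Even in situations where it did, letting $\delta\to 0^+$ would produce $(\sup_y A_0)^{1-\theta}(\sup_y A_1)^\theta$, which is generically infinite under the stated hypothesis; the whole point of allowing admissible growth is that the $A_j$ may be unbounded.

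Stein's actual argument bypasses this by invoking Hirschman's extension of the three-lines lemma: one maps the strip conformally to the unit disc via $z\mapsto e^{i\pi z}$ (this is where the condition $a<\pi$ enters naturally) and applies the Poisson integral representation to the subharmonic function $\log|F|$. The interpolation constant $A_\theta$ is then the exponential of a Poisson-kernel average of $\log A_0(\cdot)$ and $\log A_1(\cdot)$, not a product of suprema.
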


Now, we present the following theorem, which could be viewed as an analytical counterpart to the Stein-Weiss interpolation theorem; see \cite[page 120]{BL76}.
\begin{theorem}[Stein-Weiss analytic interpolation]\label{thm_SW_an_1d} 
Let $1\leq p_0, p_1, q_0,q_1 \leq \infty$, and let $T_z $ be an analytic family of linear operators between $X$ and $Y$ of \textit{admissible growth}, defined in the strip $\{ z \in \C : 0 \leq  \Re z \leq 1 \}$. Suppose that $v_0, v_1: X \rightarrow \R^+$ and ${w}_0, {w}_1:Y \rightarrow \R^+$ are non-negative weights integrable on every finite measure set.  We further assume that for all finite linear combinations of characteristic functions of rectangles of finite measures $f$ on $X$:
\begin{align}\label{hyp_T_z}
\|T_{j+i\xi}(f)\|_{L^{q_j} (Y, {w}_j dy)} \leq A_j(\xi) \|f\|_{L^{p_j}(X,v_j dx)}
\end{align}
where $j = 0, 1$ and $\log |A_j(\xi)| \leq C e^{a|\xi|}$, with $a<\pi$. Then we have 
     \begin{align*}
        \|T_{\theta}(f)\|_{L^{q_\theta} (Y, {w}_\theta dy)} \leq A_{\theta} \|f\|_{L^{p_\theta}(X,v_{\theta} dx)}
    \end{align*}
    for all simple functions $f$ of finite measure support and $0\leq  \theta \leq 1$, where $p_\theta$, $q_{\theta}$, $w_{\theta}$, and $\tilde{w}_{\theta}$ are as follows
     \begin{equation}\label{theta_exp}
    \begin{aligned}
       &\frac{1}{p_\theta}=  \frac{1-\theta}{p_0}+\frac{\theta}{p_1}, \qquad \quad \quad &&\frac{1}{q_{\theta}}= \frac{1-\theta}{q_0}+\frac{\theta}{q_1},\\
       & v_{\theta} = v_0^{\frac{p_\theta (1-\theta)}{p_0}} v_1^{\frac{p_{\theta} \theta}{p_1}},  \qquad \quad    &&{w}_{\theta} = {w}_0^{\frac{q_\theta (1-\theta)}{q_0}} {w}_1^{\frac{q_{\theta} \theta}{q_1}}.
       \end{aligned}
       \end{equation}
\end{theorem}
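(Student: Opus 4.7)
The plan is to reduce Theorem \ref{thm_SW_an_1d} to the classical Stein analytic interpolation theorem (Theorem \ref{thm_steinlytic}) on unweighted spaces, by absorbing the boundary weights into an auxiliary analytic family of operators. The key observation is that one can construct complex-valued ``interpolating weights'' $\phi_z$ on $X$ and $\psi_z$ on $Y$, analytic in $z$ on the strip $\{0\le \Re z\le 1\}$, whose moduli recover the prescribed weights on the two boundary lines and the target weight at $z=\theta$.

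Concretely, using that $v_0,v_1,w_0,w_1>0$, I would set
\begin{equation*}
\phi_z(x) := v_0(x)^{(1-z)/p_0}\, v_1(x)^{z/p_1}, \qquad \psi_z(y) := w_0(y)^{(1-z)/q_0}\, w_1(y)^{z/q_1}.
\end{equation*}
A direct computation, using only that $|a^{c+id}|=a^{c}$ for $a>0$ and $c,d\in\R$, yields the pointwise identities $|\phi_{j+i\xi}|^{p_j}=v_j$ and $|\psi_{j+i\xi}|^{q_j}=w_j$ for $j=0,1$, while the exponent relations \eqref{theta_exp} give $|\phi_\theta|^{p_\theta}=v_\theta$ and $|\psi_\theta|^{q_\theta}=w_\theta$. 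I would then introduce the auxiliary family
\begin{equation*}
S_z f := \psi_z\cdot T_z\bigl(\phi_z^{-1}f\bigr),
\end{equation*}
which inherits the analyticity and admissible growth of $\{T_z\}$, since the factors $\phi_z^{\pm 1}$ and $\psi_z$ are entire in $z$ with modulus bounded on every vertical line pointwise by $v_0^{\pm 1/p_0}+v_1^{\pm 1/p_1}$, respectively $w_0^{1/q_0}+w_1^{1/q_1}$.

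The boundary bounds for $\{S_z\}$ follow directly from hypothesis \eqref{hyp_T_z}: for any simple function $f$ of finite measure support on $X$ and $j=0,1$,
\begin{equation*}
\|S_{j+i\xi}f\|_{L^{q_j}(Y,dy)}^{q_j}
 = \int_Y \bigl|T_{j+i\xi}(\phi_{j+i\xi}^{-1}f)\bigr|^{q_j} w_j\, dy
 \leq A_j(\xi)^{q_j}\left(\int_X |\phi_{j+i\xi}^{-1}f|^{p_j} v_j\, dx\right)^{q_j/p_j}
 = A_j(\xi)^{q_j}\|f\|_{L^{p_j}(X,dx)}^{q_j},
\end{equation*}
using $|\psi_{j+i\xi}|^{q_j}=w_j$ in the first equality and $|\phi_{j+i\xi}|^{-p_j}v_j=1$ in the last. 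Theorem \ref{thm_steinlytic} applied to $\{S_z\}$ between the unweighted spaces $L^{p_0}(X,dx),L^{p_1}(X,dx)$ and $L^{q_0}(Y,dy),L^{q_1}(Y,dy)$ then gives $\|S_\theta f\|_{L^{q_\theta}(Y,dy)}\leq A_\theta\|f\|_{L^{p_\theta}(X,dx)}$. Finally, setting $h:=\phi_\theta^{-1}f$ and invoking $|\phi_\theta|^{p_\theta}=v_\theta$ together with $|\psi_\theta|^{q_\theta}=w_\theta$ converts this exactly into the desired weighted estimate $\|T_\theta h\|_{L^{q_\theta}(Y,w_\theta dy)}\leq A_\theta\|h\|_{L^{p_\theta}(X,v_\theta dx)}$.

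The main technical obstacle is rigorously verifying the analyticity and admissible growth of $z\mapsto \int_Y (S_z f)(y)\, g(y)\, dy$ on the closed strip, since the weights $v_j,w_j$ may be unbounded or approach zero, in which case $\phi_z,\psi_z$ are no longer uniformly bounded on the strip. The standard remedy is a truncation argument: replace $v_j$ by $v_j^{(\varepsilon)}:=\min(\max(v_j,\varepsilon),\varepsilon^{-1})$ and similarly for $w_j$, for which the corresponding $\phi_z^{(\varepsilon)},\psi_z^{(\varepsilon)}$ are uniformly bounded on the strip and the argument above applies without any analytical scruple; then pass to the limit $\varepsilon\to 0$ via monotone convergence on both sides of the interpolated inequality. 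The integrability-on-finite-measure-sets hypothesis on the weights is precisely what ensures that all quantities remain finite for simple $f$ of finite measure support throughout this limiting procedure.
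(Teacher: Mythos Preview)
Your proposal is correct and follows essentially the same approach as the paper: both absorb the weights into an auxiliary analytic family $S_z f = \psi_z\, T_z(\phi_z^{-1} f)$ (with $\phi_z = v_0^{(1-z)/p_0} v_1^{z/p_1}$, $\psi_z = w_0^{(1-z)/q_0} w_1^{z/q_1}$), apply the unweighted Stein interpolation theorem, and handle unbounded or vanishing weights by a truncation/approximation-by-simple-functions reduction. The only cosmetic difference is that the paper performs the reduction to bounded simple weights at the outset (so that $\phi_z^{-1} f$ is again simple when the boundary hypothesis is invoked), whereas you defer it to the end; your presentation would be slightly cleaner if you made that reduction first.
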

\begin{proof}
Following a similar argument by Tao \cite{Tao_n}, we can make qualitative reductions, where we can assume the weights $v_j$ and ${w}_j$ are bounded both above and below. We can approximate them by simple functions and employ a limiting argument to reduce to the case where $v_j$, ${w}_j$ are simple and never vanish. Now, we define the following analytic family of linear operators
\begin{align*}
    \mathcal{S}_z(f) = v_0^{\frac{1-z}{q_0}}v_1^{\frac{z}{q_1}}T_z\left(f {w}_0^{-\frac{(1-z)}{p_0}}  {w}_1^{-\frac{z}{p_1}}\right)
\end{align*}
for all $z \in \C$ with $0\leq \Re z\leq 1$, which is well-defined for all simple functions $f$ with finite measure support. The hypotheses \eqref{hyp_T_z} then ensures that 
\begin{align*}
\|\mathcal{S}_{j+i\xi}(f)\|_{L^{q_j} (Y, dy)} \leq A_j(\xi) \|f\|_{L^{p_j}(X,dx)}
\end{align*}
for $j=0,1$. Thus, applying the Stein analytic interpolation theorem, we obtain
\begin{align}\label{bdd_S_8}
    \|\mathcal{S}_{\theta}(f)\|_{L^{q_\theta} (Y, dy)} \leq A_\theta \|f\|_{L^{p_\theta}(X,dx)},
\end{align}
for all $0\leq \theta\leq 1$. Now, writing out the definition of the operator $\mathcal{S}_\theta$ in \eqref{bdd_S_8}, we get 
\begin{align*}
     \left( \int_Y v_0^{q_\theta \frac{1-\theta}{q_0}}v_1^{q_{\theta}\frac{\theta}{q_1}}|T_\theta(f {w}_0^{-\frac{(1-\theta)}{p_0}}  {w}_1^{-\frac{\theta}{p_1}})|^{q_\theta} dy\right)^{\frac{1}{q_{\theta}}}\leq A_{\theta} \left( \int_X |f|^{p_{\theta}} dx\right)^{\frac{1}{p_{\theta}}}
\end{align*}
for any simple functions $f$ on $X$ with finite measure support, which implies
\begin{align*}
     \left( \int_Y |T_\theta(f)|^{q_\theta} v_0^{q_\theta \frac{1-\theta}{q_0}}v_1^{q_{\theta}\frac{\theta}{q_1}} dy\right)^{\frac{1}{q_{\theta}}}\leq A_{\theta} \left( \int_X  |f|^{p_{\theta}} {w}_0^{ p_{\theta}\frac{(1-\theta)}{p_0}}  {w}_1^{p_{\theta}\frac{\theta}{p_1}} dx\right)^{\frac{1}{p_{\theta}}}
\end{align*}
for all simple functions with finite measure support.
This concludes the proof of our theorem.
\end{proof}
We will need a mixed norm space version of the aforementioned theorem, which can be derived similarly to \eqref{eqn_ana_int} using Theorem \ref{thm_SW_an_1d}. To simplify notation, we will state a specific version of Theorem \ref{thm_SW_an_1d} that is applicable to mixed norm spaces and relevant to this study.

Let us consider two product measure spaces $X=X_0 \times X_1$ and $Y=Y_0 \times Y_1$,  with $d x$ and $d y$ denoting the respective (product) measures on $X$ and $Y$. Then, we have the following corollary of Theorem \ref{thm_SW_an_1d}  on mixed norm spaces (see Section \ref{subsec: mix_norm} for definition of mixed norm spaces).
\begin{corollary}\label{cor_sw_ana} 
Let $1\leq p, q_j,\tilde{q_j}\leq \infty$ and $T_z $ be an analytic family of linear operators between $X$ and $Y$ of \textit{admissible growth}, defined in the strip $\{ z \in \C : 0 \leq  \Re z \leq 1 \}$. Suppose that ${w}_j: Y_0 \rightarrow \R^+$ are non-negative weights integrable on every finite measure set for $j=0,1$. We further assume that for all finite linear combinations of characteristic functions of rectangles of finite measures $f$ on $X$:
\begin{align}\label{hyp_T_z_SW}
\|T_{j+i\xi}(f)\|_{L^{Q_j} (Y, { w}_j dy)} \leq A_j(\xi) \|f\|_{L^{P}(X,dx)}
\end{align}
where $j = 0, 1$, \begin{align*}
    Q_0 = (q_0,\tilde{q_0}),\qquad Q_1 = (q_1,\tilde{q_1} ), \qquad P=(p,p),
\end{align*} 
and $\log |A_j(\xi)| \leq C e^{a|\xi|}$, with $a<\pi$. Then we have 
     \begin{align*}
        \|T_{\theta}(f)\|_{L^{B} (Y, {w}_{\theta} dy)} \leq A_{\theta} \|f\|_{L^{P}(X, dx)}
    \end{align*}
    for all simple functions $f$ of finite measure support, $0<\theta<1$, where $B= (\beta,b)$ with 
    \begin{align}
        \frac{1}{\beta}= \frac{1-\theta}{q_0}+\frac{\theta}{{q_1}},  \qquad \frac{1}{b}= \frac{1-\theta}{\tilde{q_0}}+\frac{\theta}{\tilde{q_1}}, \qquad {w}_{\theta}= {w}_0^{\frac{b(1-\theta)}{\tilde{q_0}}} {w}_1^{\frac{b\theta}{\tilde{q_1}}}.
    \end{align}
\end{corollary}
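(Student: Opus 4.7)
The plan is to reduce Corollary \ref{cor_sw_ana} to the (unweighted) analytic interpolation theorem for mixed-norm spaces \eqref{eqn_ana_int} by means of the standard Stein-Weiss trick of absorbing the weights into an auxiliary analytic family. As in the proof of Theorem \ref{thm_SW_an_1d}, we may first reduce to the case where ${w}_0$ and ${w}_1$ are simple functions that are bounded above and below, approximating the general case at the end by a limiting argument.

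With this reduction, the key construction is to define
\begin{equation*}
\mathcal{S}_z(f)(y_0,y_1) := {w}_0(y_0)^{\frac{1-z}{\tilde{q_0}}} \, {w}_1(y_0)^{\frac{z}{\tilde{q_1}}} \, T_z(f)(y_0,y_1), \qquad 0 \leq \Re z \leq 1.
\end{equation*}
Since $T_z$ is of admissible growth and the weights are bounded above and below, $\mathcal{S}_z$ remains an analytic family of admissible growth. The crucial observation is that ${w}_0, {w}_1$ depend only on the $Y_0$ variable, so they can be pulled outside the inner $L^{q_j}$-integral over $Y_1$. Consequently, for $j=0,1$,
\begin{equation*}
\|\mathcal{S}_{j+i\xi}(f)\|_{L^{Q_j}(Y,\,dy)} = \|T_{j+i\xi}(f)\|_{L^{Q_j}(Y,\,{w}_j\,dy)} \leq A_j(\xi)\|f\|_{L^P(X,\,dx)}
\end{equation*}
for all simple functions $f$ of finite measure support.

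Next, I would invoke the unweighted analytic interpolation for mixed-norm spaces \eqref{eqn_ana_int} applied to the family $\mathcal{S}_z$ with endpoints $L^P(X,dx) \to L^{Q_j}(Y,dy)$. This yields
\begin{equation*}
\|\mathcal{S}_\theta(f)\|_{L^B(Y,\,dy)} \leq A_\theta\|f\|_{L^P(X,\,dx)},
\end{equation*}
where $B=(\beta,b)$ satisfies $1/B = (1-\theta)/Q_0 + \theta/Q_1$, i.e. $1/\beta = (1-\theta)/q_0 + \theta/q_1$ and $1/b = (1-\theta)/\tilde{q_0} + \theta/\tilde{q_1}$.

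Finally, unpacking the definition of $\mathcal{S}_\theta$ and again using that the weights depend only on $y_0$, we compute
\begin{equation*}
\|\mathcal{S}_\theta(f)\|_{L^B(Y,dy)}^{\,b}
= \int_{Y_0} {w}_0^{\frac{b(1-\theta)}{\tilde{q_0}}}{w}_1^{\frac{b\theta}{\tilde{q_1}}}
\left(\int_{Y_1} |T_\theta(f)|^\beta \,dy_1\right)^{\!b/\beta}\! dy_0
= \|T_\theta(f)\|_{L^B(Y,\,{w}_\theta\,dy)}^{\,b},
\end{equation*}
which, combined with the previous inequality, gives the required bound. The only genuinely delicate step is the justification of the reduction to bounded simple weights, which proceeds exactly as indicated in the proof of Theorem \ref{thm_SW_an_1d}; the rest is a direct, essentially algebraic verification. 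No truly hard obstacle is expected because the weight is placed on $Y_0$ (matching the outer exponent $\tilde{q_j}$), so the absorption trick factors cleanly through the mixed norm.
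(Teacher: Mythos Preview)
Your proposal is correct and follows exactly the approach the paper indicates: the paper does not give a detailed proof of Corollary \ref{cor_sw_ana} but simply states that it ``can be derived similarly to \eqref{eqn_ana_int} using Theorem \ref{thm_SW_an_1d}'', which is precisely your reduction to bounded simple weights followed by the absorption trick $\mathcal{S}_z(f)=w_0^{(1-z)/\tilde{q_0}}w_1^{z/\tilde{q_1}}T_z(f)$ and an application of the unweighted mixed-norm analytic interpolation \eqref{eqn_ana_int}. Your observation that the weight depends only on $y_0$ and therefore factors cleanly through the outer exponent $\tilde{q_j}$ is the key point, and your endpoint and $\theta$-computations are correct.
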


\section*{Final remarks}
 In conclusion, we would like to draw attention to a few open questions that, from our perspective, warrant further investigation.
 \begin{enumerate}
     \item We will revisit this $L^p \rightarrow L^q$ Fourier multiplier problem in the near future to extend our results to reducible symmetric spaces of the form $\X_1 \times \cdots \times \X_m$, where $m \geq 2$ and each $\X_i$ is a Riemannian symmetric space.
     \item Additionally, it would be interesting to investigate whether an analogue of the $L^p \rightarrow L^q$ boundedness of Fourier multiplier theorem can be established in locally symmetric spaces for $1<p\leq 2 \leq q <\infty$.
 \end{enumerate}
\section*{Acknowledgments}
The authors thank Sanjoy Pusti for several useful discussions during the course of this work. TR also thanks Vishvesh Kumar and MR for making him interested in $L^p \rightarrow L^q$ multiplier problems.   TR and MR are supported by the FWO Odysseus 1 grant G.0H94.18N: Analysis and Partial Differential Equations, the Methusalem program of the Ghent University Special Research Fund (BOF), (TR Project title: BOFMET2021000601). TR is also supported by BOF postdoctoral fellowship at Ghent
University BOF24/PDO/025.  MR is also supported by EPSRC grant EP/V005529/7.
\bibliographystyle{alphaurl}
\bibliography{Ref_HYP}
\end{document}